\newcommand{\mf}{\mathcal{F}}
\newcommand{\mg}{\mathcal{G}}
\newcommand{\mmp}{\mathbb{P}}
\newcommand{\Cov}{\mathrm{Cov}}
\DeclareMathOperator{\Var}{\mathrm{Var}}
\DeclareMathOperator{\sign}{\mathrm{sign}}
\newcommand{\dod}{\overset{\mathrm{d}}{\to}}
\newcommand{\fdc}{\overset{\mathrm{f.d.}}{\Rightarrow}}
\newcommand{\tp}{\overset{P}{\to}}
\newcommand{\me}{\mathbb{E}}
\newcommand{\mr}{\mathbb{R}}
\newcommand{\mn}{\mathbb{N}}
\newcommand{\F}{\mathcal{F}}
\newcommand{\lit}{\underset{t\to\infty}{\lim}}
\newcommand{\dy}{\mathrm{d} \mathit{y}}
\newcommand{\ds}{\mathrm{d} \mathit{s}}
\newcommand{\comp}{\mathsf{c}}
\newcommand{\1}{\mathbbm{1}}
\newtheorem{thm}{Theorem}[section]
\newtheorem{lemma}[thm]{Lemma}
\newtheorem{cor}[thm]{Corollary}
\newtheorem{assertion}[thm]{Proposition}
\theoremstyle{definition}
\newtheorem{define}[thm]{Definition}
\newtheorem{example}[thm]{Example}
\theoremstyle{remark}
\newtheorem{rem}[thm]{Remark}
\begin{document}

\begin{frontmatter}

\title{Asymptotics of random processes with immigration I: scaling limits}

\runtitle{Asymptotics of random processes with immigration I}

\begin{aug}

\author{\fnms{Alexander} \snm{Iksanov}\thanksref{a,e1}\corref{}\ead[label=e1,mark]{iksan@univ.kiev.ua}}
\author{\fnms{Alexander} \snm{Marynych}\thanksref{a,e2}\ead[label=e2,mark]{marynych@unicyb.kiev.ua}}
\and
\author{\fnms{Matthias} \snm{Meiners}\thanksref{b,e3}\ead[label=e3,mark]{meiners@mathematik.tu-darmstadt.de}}

\runauthor{A. Iksanov et al.}

\affiliation{Taras Shevchenko National University of Kyiv and Technical University of Darmstadt}
\address[a]{Faculty of Cybernetics, Taras Shevchenko National University of Kyiv, 01601 Kyiv, Ukraine. \printead{e1,e2}}
\address[b]{Fachbereich Mathematik, Technische Universitat Darmstadt, 64289 Darmstadt, Germany. \printead{e3}}

\end{aug}

\begin{abstract}
Let $(X_1, \xi_1), (X_2,\xi_2),\ldots$ be i.i.d.~copies
of a pair $(X,\xi)$ where $X$ is a random process with paths in
the Skorokhod space $D[0,\infty)$ and $\xi$ is a positive random
variable. Define $S_k := \xi_1+\ldots+\xi_k$, $k \in \mn_0$ and
$Y(t) := \sum_{k\geq 0} X_{k+1}(t-S_k) \1_{\{S_k \leq t\}}$,
$t\geq 0$. We call the process $(Y(t))_{t \geq 0}$ random process
with immigration at the epochs of a renewal process.
We investigate weak convergence of the finite-dimensional distributions
of $(Y(ut))_{u>0}$ as $t\to\infty$.
Under the assumptions that the covariance function of $X$ is regularly varying in
$(0,\infty)\times (0,\infty)$ in a uniform way,
the class of limiting processes is rather rich and includes Gaussian processes
with explicitly given covariance functions, fractionally
integrated stable L\'evy motions and their sums when the law of
$\xi$ belongs to the domain of attraction of a stable law with
finite mean, and conditionally Gaussian processes with explicitly
given (conditional) covariance functions, fractionally integrated
inverse stable subordinators and their sums when the law of $\xi$
belongs to the domain of attraction of a stable law with infinite
mean.
\end{abstract}

\begin{keyword}
\kwd{random process with immigration}
\kwd{shot noise processes}
\kwd{weak convergence of finite-dimensional distributions}
\kwd{renewal theory}
\end{keyword}



\end{frontmatter}

\section{Introduction}
\subsection{Random processes with immigration at the epochs of a renewal process}
Denote by $D[0,\infty)$ and $D(0,\infty)$ the Skorokhod spaces of
right-continuous real-valued functions which are defined on
$[0,\infty)$ and $(0,\infty)$, respectively, and have finite
limits from the left at each point of the domain. Throughout the
paper, we abbreviate $D[0,\infty)$ by $D$. Let
$X:=(X(t))_{t\in\mr}$ be a random process with paths in $D$
satisfying $X(t)=0$ for all $t<0$, and let $\xi$ be a positive
random variable. Arbitrary dependence between $X$ and $\xi$ is
allowed. It is worth stating explicitly that we do not exclude the
possibility $X=h$ a.s.~for a deterministic function $h$.

Further, let $(X_1,\xi_1), (X_2, \xi_2),\ldots$ be i.i.d.~copies of the pair $(X,\xi)$
and denote by $(S_k)_{k \in \mn_0}$ the zero-delayed random walk with increments $\xi_j$, that is,
\begin{equation*}
S_0 := 0, \qquad    S_k:=\xi_1+\ldots+\xi_k, \quad  k\in\mn.
\end{equation*}
We write $(\nu(t))_{t\in\mr}$ for the corresponding first-passage time process, i.e.,
\begin{equation*}
\nu(t):=\inf\{k\in\mn_0: S_k>t\}=\#\{k\in\mn_0: S_k\leq t\},
\quad  t\in\mr,
\end{equation*}
where the last equality holds a.s. The process $Y :=
(Y(t))_{t\in\mr}$ defined by
\begin{equation}    \label{eq:Y(t)}
Y(t) := \sum_{k\geq 0}X_{k+1}(t-S_k) = \sum_{k=0}^{\nu(t)-1}X_{k+1}(t-S_k), \quad   t\in\mr
\end{equation}
will be called {\it random process with immigration at the epochs
of a renewal process} or, for short, {\it random process with
immigration}. The interpretation is as follows: at time $S_0=0$
the immigrant $1$ starts running a random process $X_1$, for $k\in\mn$, at
time $S_k$ the immigrant $k+1$ starts running a random process $X_{k+1}$,
$Y(t)$ being the sum of all processes run by the immigrants up to
and including time $t$. We advocate using this term for two
reasons. First, we believe that it is more informative than the
more familiar term {\it renewal shot noise process with random
response functions $X_k$}; in particular, the random process $Y$
defined by \eqref{eq:Y(t)} has little in common with the
originally defined shot noise processes \citep{Schottky:18}
intended to model the real shot noise in vacuum tubes which were
based on Poisson inputs and deterministic response functions.
Second, the new term was inspired by the fact that if $X$ is a
continuous-time branching process, then $Y$ is known in the
literature as a {\it branching process with immigration}.

Random processes with immigration have been used to model
various phenomena. An incomplete list of possible areas of
applications includes anomalous diffusion in physics
\citep{Metzler+Klafter:2000}, earthquakes occurrences in geology
\citep{Vere-Jones:1970}, rainfall modeling in meteorology
\citep{Rodriguez-Iturbe+Cox+Isham:1987,Waymire+Gupta:1981}, network
traffic in computer sciences \citep{Konstantopoulos+Lin:98,
Mikosch+Resnick:2006, Resnick+Rootzen:2000, Resnick+Berg:2000} as
well as insurance
\citep{Klueppelberg+Mikosch:1995a,Klueppelberg+Mikosch:1995b} and
finance \citep{Klueppelberg+Kuhn:2004, Samorodnitsky:1996}. Further
references concerning mainly renewal shot noise processes can be found in
\citep{Alsmeyer+Iksanov+Meiners:2015,Hsing+Teugels:1989,Iksanov+Marynych+Meiners:2014,Vervaat:1979}.
Although we do not have any particular application in mind, our
results are potentially useful for either of the aforementioned fields.

\subsection{Weak convergence of random processes with immigration}
The paper at hand is part of a series of papers that further contains the references
\citep{Iksanov:2013,Iksanov+Marynych+Meiners:2014,Iksanov+Marynych+Meiners:2015II}
in which we investigate the
asymptotic distribution of $Y$.
When $\mu := \me \xi < \infty$ and $X(t)$ tends to $0$ quickly as $t \to \infty$
(more precisely, if $t \mapsto \me[|X(t)| \wedge 1]$ is a directly Riemann integrable function),
then, under mild technical assumptions, $(Y(u+t))_{u \geq 0}$ converges to a stationary version of the process.
This convergence is investigated in \citep{Iksanov+Marynych+Meiners:2015II}.
In the present paper, we focus on the case where the law of $\xi$ is in the domain of attraction of a stable law of index $\alpha \not = 1$
and, if $\mu < \infty$ (equivalently, $\alpha > 1$), either $\me[X(t)]$ or $\Var[X(t)]$ is too large for convergence to stationarity.
In this situation, we investigate the weak convergence of the finite-dimensional distributions of
$Y_t(u):= a(t)^{-1}(Y(ut)-b(ut))$ with suitable norming constants $a(t)>0$ and shifts $b(t)\in\mr$.
This convergence is mainly regulated by two factors:
the tail behavior of $\xi$ and the asymptotics of
the finite-dimensional distributions of $X(t)$ as $t\to\infty$. The
various combinations of these give rise to a broad spectrum of
possible limit results.
In this paper, assuming that $h(t):=\me [X(t)]$ is finite for all $t\geq 0$, we start with the decomposition
\begin{equation}\label{decomp}
Y(t)-b(t) =   \bigg(\!Y(t)-\sum_{k\geq 0} h(t-S_k)\1_{\{S_k\leq
t\}} \!\bigg) + \bigg(\!\sum_{k\geq 0} h(t-S_k)\1_{\{S_k \leq
t\}}-b(t) \!\bigg)
\end{equation}
and observe that $Y_t(u)$ may converge if at least one summand in
\eqref{decomp}, properly normalized, converges weakly.

The asymptotic behavior of the second summand, properly normalized, is
driven by the functional limit theorems for the first-passage time
process $(\nu(t))_{t \geq 0}$ as well as the behavior of the function $h$ at
infinity.

The asymptotics of the first summand, properly
normalized, is accessible via martingale central limit theory
or convergence results for triangular arrays.
When $\me\xi$ is finite, the normalizing constants and limiting processes for the
first summand are completely determined by properties of $X$, the
influence of the law of $\xi$ is small. This phenomenon can easily
be understood: the randomness induced by the $\xi_k$'s is governed
by the law of large numbers for $\nu(t)$ and is thus
degenerate in the limit. When $\me \xi$ is infinite and
$\mmp\{\xi>t\}$ is regularly varying with index larger than $-1$,
$\nu(t)$, properly normalized, weakly converges to a
non-degenerate law. Hence, unlike the finite-mean case, the
randomness induced by $\xi$ persists in the limit.

It turns out that there are situations in which one of the
summands in \eqref{decomp} dominates (cases $p=0$ and $p=1$ of
Theorem \ref{Thm:mu<infty}; cases $q=0$ and $q=1$ of Theorem
\ref{Thm:mu=infty}; the case where $h \equiv 0$), and those in
which the contributions of the summands are comparable (case
$p\in(0,1)$ of Theorem \ref{Thm:mu<infty} and case $q\in(0,1)$ of
Theorem \ref{Thm:mu=infty}). A nice feature of the former
situation is that possible dependence of $X$ and $\xi$ gets
neutralized by normalization (provided $\lim_{t\to\infty}a(t)=+\infty$) so that the limit
results are only governed by individual contributions of $X$ and
$\xi$. Suppose, for the time being, that the latter situation
prevails, i.e., the two summands in \eqref{decomp}
are of the same order, and that $X$ and $\xi$ are independent. From the
discussion above it should be clear that whenever $\me\xi$ is
finite, the two limit random processes corresponding to the
summands in \eqref{decomp} are independent,
whereas this is not the case, otherwise. Still, we are able to show that the summands in
\eqref{decomp} converge jointly.

When $X$ and $\xi$ are dependent,
proving such a joint convergence remains an open problem.
In the particular case where $X(t)=\1_{\{|\log(1-W)|>t\}}$ and $\xi=|\log
W|$ for some random variable $W \in (0,1)$ a.s., this problem,
already reported in Section 1 of
\citep{Iksanov+Marynych+Vatutin:2013+}, turned out to be the major
obstacle on the way towards obtaining the description of {\it all}
possible modes of weak convergence of the number of empty boxes in
the Bernoulli sieve.

Adequacy of the aforementioned approach was realized by the
authors some time ago, and as a preparation for its implementation
the articles \citep{Iksanov:2013, Iksanov+Marynych+Meiners:2014}
were written. In the first of these papers, functional limit
theorems for the second summand have been established in the case
where $h$ is eventually increasing\footnote{We call a function $h$
increasing (decreasing) if $s<t$ implies $h(s) \leq h(t)$
(resp.,~$h(s) \geq h(t)$) and strictly increasing (decreasing) if $s<t$ implies
$h(s)<h(t)$ (resp.,~$h(s)>h(t)$).}, while in the second
convergence of the finite-dimensional distributions of the second
summand has been proved in the case where $h$ is eventually
decreasing\footnote{The present paper does not offer new results
about weak convergence of the second summand in \eqref{decomp}
alone. However, the joint convergence of the summands in
\eqref{decomp} is treated here for the first time.}.

\subsection{Bibliographic comments and known results}
In the case where $\xi$ has an exponential law,
the process $Y$ (or its stationary version) is a {\it Poisson shot noise process}.
Weak convergence of Poisson shot noise processes has received considerable attention.
In some papers of more applied nature weak convergence of $Y_t(u)$ for $X$ having a
specific form is investigated. In the list to be given next $\eta$ denotes a
random variable independent of $\xi$ and $f$ a deterministic function
which satisfies certain restrictions which are specified in the cited papers:
\begin{itemize}
\item
$X(t) = \1_{\{\eta>t\}}$ and $X(t) = t \wedge \eta$, functional convergence, see \citep{Resnick+Rootzen:2000};
\item
$X(t)=\eta f(t)$, stationary version of $Y$, functional convergence, see \citep{Klueppelberg+Kuhn:2004};
\item
$X(t)=f(t\wedge\eta)$, convergence of finite-dimensional distributions, see \citep{Konstantopoulos+Lin:98};
functional convergence, see \citep{Resnick+Berg:2000};
\item
$X(t)=\eta_1/\eta_2 f(t\eta_2)$, stationary version,
convergence of finite-dimensional distributions, see \citep{Giraitis+Surgailis:1990, Giraitis+Surgailis:1991}.
\end{itemize}
The articles \citep{Heinrich+Schmidt:1985, Jedidi_etal:2012, Klueppelberg+Mikosch:1995a, Klueppelberg+Mikosch+Schaerf:2003,
Lane:1984} are of more theoretical nature, and study weak
convergence of $Y_t(u)$ for general (not explicitly specified) $X$.
The work \citep{Jedidi_etal:2012} contains further pointers to relevant literature which could have extended our list of
particular cases given above.

In the case where the law of $\xi$ is exponential, the variables
$Y_t(u)$ have infinitely divisible laws with characteristic functions of a rather simple form.
Furthermore, the convergence, as $t \to \infty$, of these characteristic functions to a
characteristic function of a limiting infinitely divisible law follows from the general theory.
Also, in this context Poisson random measures arise naturally and working with them considerably simplifies the analysis.
In the cases where the law of $\xi$ is not exponential,
the aforementioned approaches are not applicable. We are aware of several papers  in which weak convergence of processes $Y$,
properly normalized, centered and rescaled, is investigated
in the case where $\xi$ has distribution other than exponential.
Iglehart \citep{Iglehart:1973} has proved weak convergence of
\begin{equation*}
\frac{1}{\sqrt{n}} \bigg(\sum_{k\geq 0}X_{k+1}(u-n^{-1}S_k)\1_{\{S_k\leq nu\}}- \frac{n}{\me\xi} \int_0^u\me [X(y)] \dy\bigg)
\end{equation*}
in $D[0,1]$ to a Gaussian process, as $n\to\infty$, under rather restrictive assumptions
(in particular, concerning the existence of moments of order four).
See also Theorem 1 on p.~103 of \citep{Borovkov:1984} for a similar result with
$X(t)=\1_{\{\eta>t\}}$ in a more general setting.
For $X(t)=\int_0^t f(s,\eta) \ds$, weak convergence of $(Y_t(u))_{0 \leq u \leq 1}$ on $D[0,1]$ as $t \to \infty$
was established in \citep{Iglehart+Kennedy:1970} under the assumptions that $\xi$ and $\eta$ are independent,
that $\int_0^\infty |f(s,x)| \ds <\infty$ for every $x\in\mr$ and some other conditions.
For $X(t)=\1_{\{\eta>t\}}$, weak convergence of finite-dimensional distributions of $(Y_t(u))_u$, as $t\to\infty$,
has been settled in \citep{Mikosch+Resnick:2006} under the assumption that
$\xi$ and $\eta$ are independent and some moment-type conditions.

Last but not least, weak convergence of
$Y_t(1)$ has been much investigated, especially in the
case where $X$ is a branching process (see, for instance,
\citep{Alsmeyer+Slavtchova-Bojkova:2005,Jagers:1968,Pakes+Kaplan:1974}).

\subsection{Additional definitions}
Throughout the paper we assume that $h(t):=\me [X(t)]$ is finite for all $t\geq 0$ and that the covariance
\begin{equation*}
f(s,t) := \Cov[X(s),X(t)] = \me[X(s)X(t)] - \me[X(s)] \me[X(t)]
\end{equation*}
is finite for all $s,t \geq 0$. The variance of $X$ will be
denoted by $v$, i.e., $v(t):=f(t,t)=\Var[X(t)]$. In what follows
we assume that $h, v \in D$. By Lebesgue's dominated convergence
theorem, local uniform integrability of $X^2$ is sufficient for
this to be true since the paths of $X$ belong to $D$. $h, v \in D$
implies that $h$ and $v$ are a.e.~continuous and locally bounded.
Consequently, $\int_0^t h(y)\dy$ and $\int_0^t v(y)\dy$ are well-defined as Riemann integrals.

\paragraph{Regular variation in $\mr^2$.}
Recall that a positive measurable function $\ell$, defined on some
neighborhood of $\infty$, is called {\it slowly varying} at
$\infty$ if $\lim_{t\to\infty} \frac{\ell(ut)}{\ell(t)}=1$ for all $u> 0$, see \cite[p.~6]{Bingham+Goldie+Teugels:1989}.
\begin{define}\label{regular_variation_R^2}
A function $r: [0,\infty)\times [0,\infty)\to\mr$ is {\it
regularly varying}\footnote{The canonical definition of the
regular variation in $\mr^2_+$ (see, for instance,
\citep{Haan+Resnick:1979}) requires nonnegativity of $r$.} in
$\mr^2_+:=(0,\infty)\times (0,\infty)$ if there exists a function
$C: \mr^2_+ \to (0,\infty)$, called {\it limit function}, such that
\begin{equation*}
\lim_{t \to \infty} {r(ut,wt)\over r(t,t)}=C(u,w), \quad u,w>0.
\end{equation*}
\end{define}
The definition
implies that $r(t,t)$ is  regularly varying at $\infty$, i.e.,
$r(t,t) \sim t^\beta \ell(t)$ as $t\to\infty$ for some $\ell$
slowly varying at $\infty$ and some $\beta\in\mr$
which is called the {\it index of regular variation}.
In particular, $C(a,a)=a^{\beta}$ for all $a>0$ and further
$$C(au,aw)=C(a,a)C(u,w)=a^\beta C(u,w)$$ for all $a,u,w>0$.
\begin{define}\label{regular_variation_R^21}
A function $r: [0,\infty)\times [0,\infty)\to\mr$ will be called
{\it fictitious regularly varying} of index $\beta$ in $\mr^2_+$
if
\begin{equation*}
\lim_{t \to \infty} {r(ut,wt)\over r(t,t)}=C(u,w),\quad u,w>0,
\end{equation*}
where $C(u,u):=u^\beta$ for $u>0$ and $C(u,w):=0$ for $u,w>0$,
$u\neq w$. A function $r$ will be called {\it wide-sense regularly
varying} of index $\beta$ in $\mr^2_+$ if it is either regularly
varying or fictitious regularly varying of index $\beta$ in
$\mr^2_+$.
\end{define}
The function $C$ corresponding to a fictitious regularly varying
function will also be called {\it limit function}.
\begin{define}\label{uniform_regular_variation_in_strips}
A function $r: [0,\infty)\times [0,\infty)\to\mr$ is {\it
uniformly regularly varying of index $\beta$ in strips in}
$\mr^2_+$ if it is regularly varying of index $\beta$ in $\mr^2_+$
and
\begin{equation}\label{0040}
\lim_{t \to \infty} \sup_{a\leq u\leq b} \bigg|\frac{r\big(ut,(u+w)t\big)}{r(t,t)}-C(u,u+w)\bigg| ~=~ 0
\end{equation}
for every $w>0$ and all $0<a<b<\infty$.
\end{define}

\paragraph{Limit processes for $Y_t(u)$.} The processes introduced in
Definition \ref{definition_v_process} arise as weak limits of the
first summand in \eqref{decomp} in the case $\me\xi<\infty$. We
shall check that they are well-defined at the beginning of
Section \ref{subsec:Proofs of Propositions}.
\begin{define}\label{definition_v_process}
Let $C$ be the limit function for a wide-sense regularly varying
function (see Definition \ref{regular_variation_R^21}) in
$\mr^2_+$ of
index $\beta$ for some $\beta\in (-1,\infty)$. 
We shall denote by $V_{\beta}:=(V_{\beta}(u))_{u> 0}$ a centered
Gaussian process with covariance function
\begin{equation*}
\me [V_\beta(u)V_\beta(w)] = \int_0^u C(u-y, w-y) \, \dy,
\quad   0< u \leq w,
\end{equation*}
when $C(s,t) \neq 0$ for some $s,t>0$, $s \neq t$, and a centered
Gaussian process with independent values and variance $\me
[V_\beta^2(u)] = (1+\beta)^{-1}u^{1+\beta}$, otherwise.
\end{define}

Definition \ref{inverse_subordinator_definition} reminds of the
notion of an inverse subordinator.
\begin{define}\label{inverse_subordinator_definition}
For $\alpha\in (0,1)$, let $W_\alpha:=(W_\alpha(t))_{t\geq 0}$ be
an $\alpha$-stable subordinator (nondecreasing L\'{e}vy process)
with Laplace exponent $-\log \me [\exp(-zW_\alpha(t))] =
\Gamma(1-\alpha) tz^\alpha$, $z\geq 0$, where $\Gamma(\cdot)$ is
the gamma function. {\it The inverse $\alpha$-stable subordinator}
$W_\alpha^\leftarrow:=(W_\alpha^\leftarrow(s))_{s\geq 0}$ is
defined by
$$W_\alpha^\leftarrow(s):=\inf\{t\geq 0: W_\alpha(t)>s\},\quad s\geq 0.$$
\end{define}
The processes introduced in Definition \ref{definition_process_z}
arise as weak limits of the first summand in \eqref{decomp} in the
case $\me\xi=\infty$. We shall check that these are well-defined
in Lemma \ref{integrals}.
\begin{define}\label{definition_process_z}
Let $W^\leftarrow_\alpha$ be an inverse $\alpha$-stable
subordinator and $C$ the limit function for a wide-sense regularly
varying function (see Definition \ref{regular_variation_R^21}) in
$\mr^2_+$ of index $\beta$ for some $\beta\in [-\alpha,\infty)$.
We shall denote by
$Z_{\alpha,\beta}:=(Z_{\alpha,\beta}(u))_{u> 0}$ a process
which, given $W_\alpha^\leftarrow$, is centered Gaussian with
(conditional) covariance
$$\me \big[Z_{\alpha,\beta}(u)Z_{\alpha,\beta}(w)\big|W^\leftarrow_\alpha\big]
= \int_{[0,u]}C(u-y,w-y) \, {\rm d}W_\alpha^\leftarrow(y),
\quad 0<u\leq w,$$
when $C(s,t)\neq 0$ for some $s,t>0$, $s \neq t$, and
a process which, given $W_\alpha$, is centered Gaussian with
independent values and (conditional) variance
$\me[Z_{\alpha,\beta}(u)^2|W^\leftarrow_\alpha]
= \int_{[0,u]}(u-y)^\beta \, {\rm d}W_\alpha^\leftarrow(y)$,
otherwise.
\end{define}

Throughout the paper, we use $\overset{\mathrm{d}}{\to}$,
$\overset{\mathbb{P}}{\to}$ and $\Rightarrow$ to denote weak
convergence of one-dimensional distributions, convergence in
probability and convergence in distribution in a function space, respectively.
Additionally, we write $Z_t(u) \fdc Z(u)$, $t \to \infty$ to
denote weak convergence of finite-dimensional distributions,
i.e., for any $n\in\mn$ and any $0<u_1<u_2<\ldots<u_n<\infty$,
\begin{equation*}
(Z_t(u_1),\ldots, Z_t(u_n)) ~\stackrel{\mathrm{d}}{\to}~ (Z(u_1),\ldots,Z(u_n)),    \quad   t\to\infty.
\end{equation*}
We stipulate hereafter that $\ell$, $\widehat{\ell}$ and
$\ell^\ast$ denote functions slowly varying at $\infty$ and that
all unspecified limit relations hold as $t\to\infty$.

\section{Main results} \label{sec:Main results}

\subsection{Asymptotic distribution of the first summand in \eqref{decomp}}

Proposition \ref{Prop:mu<infty} (case $\me\xi<\infty$) and
Proposition \ref{Prop:mu=infty} (case $\me\xi=\infty$) deal with
the asymptotics of the first summand in \eqref{decomp}.
\begin{assertion}    \label{Prop:mu<infty}
Assume that
\begin{itemize}
    \item
        $\mu := \me\xi\in (0,\infty)$;
    \item
        $f(u,w) = \Cov[X(u), X(w)]$ is either uniformly regularly varying in strips in
        $\mr_+^2$ or fictitious regularly varying in $\mr^2_+$, in
        either of the cases,
        of index $\beta$ for some $\beta \in (-1,\infty)$ and with limit function
        $C$; when $\beta=0$, there exists a positive monotone function $u$ satisfying
        $v(t)=\Var[X(t)]\sim u(t)$ as $t\to\infty$;
    \item
        for all $y>0$
        \begin{equation}    \label{eq:Lindeberg X-h}
        v_y(t) := \me\Big[(X(t)-h(t))^2 \1_{\{|X(t)-h(t)|>y\sqrt{tv(t)}\}}\Big] = o(v(t)), \quad	t\to\infty.
        \end{equation}
\end{itemize}
Then
\begin{equation}    \label{eq:1st summand convergence}
\frac{Y(ut)-\sum_{k\geq 0} h(ut-S_k)\1_{\{S_k\leq
ut\}}}{\sqrt{\mu^{-1}tv(t)}} ~\fdc~   V_\beta(u), \quad	t\to\infty
\end{equation}
where $V_\beta$ is a centered Gaussian process as introduced in
Definition \ref{definition_v_process}.
\end{assertion}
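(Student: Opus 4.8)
The plan is to verify convergence of the finite-dimensional distributions through the Cram\'er--Wold device and a martingale central limit theorem, after revealing the i.i.d.\ pairs $(X_k,\xi_k)$ in their natural order; the point of ordering them this way is that $S_k$ depends only on $\xi_1,\dots,\xi_k$, so at the moment the value $X_{k+1}(u_jt-S_k)$ is needed the argument is already fixed and the pair $(X_{k+1},\xi_{k+1})$ is still entirely fresh, which is exactly why the possible dependence of $X$ and $\xi$ causes no trouble here (unlike in the joint convergence with the second summand). Concretely, fix $n\in\mn$, $0<u_1<\dots<u_n$ and $\lambda_1,\dots,\lambda_n\in\mr$, write $\widetilde X_k(\cdot):=X_k(\cdot)-h(\cdot)$, set $\mg_k:=\sigma\big((X_1,\xi_1),\dots,(X_{k+1},\xi_{k+1})\big)$ for $k\geq 0$ with $\mg_{-1}$ trivial, and define
\[
D_k^{(t)}:=\sum_{j=1}^n\lambda_j\,\widetilde X_{k+1}(u_jt-S_k)\,\1_{\{S_k\leq u_jt\}},\qquad k\geq 0.
\]
Since $S_k$ is $\mg_{k-1}$-measurable while $(X_{k+1},\xi_{k+1})$ is independent of $\mg_{k-1}$ and distributed as $(X,\xi)$, one gets $\me[\widetilde X_{k+1}(s)\mid\mg_{k-1}]=\me[X(s)]-h(s)=0$ for every constant $s$, so $(D_k^{(t)})_{k\geq 0}$ is a martingale difference sequence with respect to $(\mg_k)_{k\geq 0}$, and $\sum_{j}\lambda_j\big(Y(u_jt)-\sum_{k\geq 0}h(u_jt-S_k)\1_{\{S_k\leq u_jt\}}\big)=\sum_{k\geq 0}D_k^{(t)}$ is an a.s.\ finite sum because $\xi>0$ a.s. By the martingale central limit theorem for arrays of martingale differences (e.g.\ in the form of Hall and Heyde) it thus suffices to verify (i) the conditional Lindeberg condition for $D_k^{(t)}/\sqrt{\mu^{-1}tv(t)}$ and (ii) the convergence of the normalized conditional variances
\[
\frac{1}{\mu^{-1}tv(t)}\sum_{k\geq 0}\me\big[(D_k^{(t)})^2\mid\mg_{k-1}\big]\ \tp\ \sigma^2:=\Var\Big(\sum_{j=1}^n\lambda_j V_\beta(u_j)\Big).
\]

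For (ii), conditioning on $\mg_{k-1}$ freezes $S_k$ and leaves $(X_{k+1},\xi_{k+1})$ distributed as $(X,\xi)$, so the left-hand side equals
\[
\frac{\mu}{tv(t)}\sum_{i,j=1}^n\lambda_i\lambda_j\sum_{k\geq 0}f(u_it-S_k,u_jt-S_k)\,\1_{\{S_k\leq(u_i\wedge u_j)t\}}.
\]
Using $\sum_{k\geq 0}g(S_k)\1_{\{S_k\leq T\}}=\int_{[0,T]}g\,\mathrm d\nu$ and substituting $S_k=yt$, the inner sum becomes $t\int_{[0,u_i\wedge u_j]}f\big((u_i-y)t,(u_j-y)t\big)\,\mathrm d\big(t^{-1}\nu(t\,\cdot\,)\big)(y)$, where $t^{-1}\nu(t\,\cdot\,)$ denotes the measure associated with $y\mapsto\nu(ty)/t$. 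As $t\to\infty$ this measure converges a.s.\ to $\mu^{-1}\,\mathrm{Leb}$ locally uniformly (monotonicity of $\nu$, continuity of the limit and the strong law $\nu(t)/t\to\mu^{-1}$), while uniform regular variation in strips gives $f((u_i-y)t,(u_j-y)t)/v(t)\to C(u_i-y,u_j-y)$ uniformly for $y$ in compact subsets of $[0,u_i\wedge u_j)$; the contribution of a neighbourhood of the endpoint $y=u_i\wedge u_j$ is shown to be uniformly negligible using $|f|\leq\sqrt{v(\cdot)v(\cdot)}$, Potter's bounds and $\beta>-1$ (and, when $\beta=0$, the monotone majorant of $v$). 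Collecting terms gives $\sigma^2=\sum_{i,j}\lambda_i\lambda_j\int_0^{u_i\wedge u_j}C(u_i-y,u_j-y)\,\mathrm dy=\Var\big(\sum_j\lambda_j V_\beta(u_j)\big)$; along the way this also establishes that $V_\beta$ of Definition~\ref{definition_v_process} is well defined, the integrand being nonnegative definite as a limit of covariances and integrable precisely because $\beta>-1$. In the fictitious/degenerate case the same computation yields $\sigma^2=\sum_j\lambda_j^2(1+\beta)^{-1}u_j^{1+\beta}$.

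For (i), the bound $|D_k^{(t)}|\leq(\sum_j|\lambda_j|)\max_j|\widetilde X_{k+1}(u_jt-S_k)|$ together with the same conditioning and routine estimates (Cauchy--Schwarz to dispose of cross terms) reduces the conditional Lindeberg sum to a constant multiple of $(\mu^{-1}tv(t))^{-1}\sum_{k\geq 0}\max_j v_{\varepsilon'}(u_jt-S_k)\1_{\{S_k\leq u_jt\}}$ for a suitable $\varepsilon'>0$. Here \eqref{eq:Lindeberg X-h} supplies $v_{\varepsilon'}(s)=o(v(s))$ as $s\to\infty$, which annihilates the bulk of the sum via the renewal argument of (ii) applied with $v_{\varepsilon'}$ in place of $f$, while the trivial bound $v_{\varepsilon'}\leq v$ and $\beta>-1$ control the endpoint neighbourhoods. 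This verifies both hypotheses of the martingale CLT, so $\sum_k D_k^{(t)}/\sqrt{\mu^{-1}tv(t)}\dod\mathcal N(0,\sigma^2)$; since $\lambda$ was arbitrary, the Cram\'er--Wold device yields \eqref{eq:1st summand convergence}.

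I expect the main obstacle to be the endpoint analysis in (ii) (and its counterpart in (i)): as $y\uparrow u_i\wedge u_j$ the ratio $f((u_i-y)t,(u_j-y)t)/v(t)$ is no longer controlled by regular variation, since the smaller argument does not tend to infinity, and one must instead extract integrability from the constraint $\beta>-1$ via Potter-type domination of $v$ near its small arguments along the renewal sequence. This is exactly the place where the wide-sense / uniform-in-strips hypotheses are used in an essential way, and where, in the borderline index $\beta=0$, one needs the extra regularity provided by a monotone function asymptotically equivalent to $v$ in order to make the requisite bounds uniform in $t$.
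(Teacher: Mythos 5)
Your proposal follows essentially the same route as the paper's own proof: Cram\'er--Wold plus the Hall--Heyde martingale CLT with the filtration generated by the pairs $(X_k,\xi_k)$, conditional variances treated via the (functional) strong law for $\nu$ together with uniform regular variation in strips away from the endpoint $y=u_i\wedge u_j$, and the conditional Lindeberg condition reduced through \eqref{eq:Lindeberg X-h}, a monotone comparison for $t\mapsto tv(t)$, and a renewal estimate. The only cosmetic difference is at the endpoint: where you invoke Potter-type bounds, the paper instead bounds that contribution in expectation by Markov's inequality and a key-renewal-type asymptotic (Lemma \ref{Lem:Sgibnev2}, applied to a monotone equivalent of $v$), which is the tool that actually controls the bounded arguments of $v$ there; this is a matter of bookkeeping, not of approach.
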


\begin{assertion}\label{Prop:mu=infty}
Assume that
\begin{itemize}
    \item
        $X$ is independent of $\xi$;
    \item
        for some $\alpha\in (0,1)$ and some $\ell^\ast$
\begin{equation}\label{eq:regular_variation_inf_mean}
        \mmp\{\xi>t\}\sim t^{-\alpha}\ell^\ast(t),\quad t\to\infty;
\end{equation}
    \item
        $f(u,w)=\Cov[X(u), X(w)]$ is either uniformly regularly varying in strips in
        $\mr_+^2$ or fictitious regularly varying in $\mr^2_+$, in
        either of cases,
        of index $\beta$ for some $\beta \in [-\alpha,\infty)$ and with limit function
        $C$; when $\beta=-\alpha$, there exists a positive increasing function $u$ with
        $\lim_{t \to \infty} \frac{v(t)}{\mmp\{\xi>t\}u(t)}=1$;
    \item
        for all $y>0$
        \begin{equation}    \label{eq:stable Lindeberg X-h}
        v_y(t) := \me\Big[(X(t)-h(t))^2 \1_{\{|X(t)-h(t)|>y\sqrt{v(t)/\mmp\{\xi>t\}}\}}\Big] = o(v(t)),\quad t \to
        \infty.
        \end{equation}
\end{itemize}
Then
\begin{equation*}
\sqrt{\frac{\mmp\{\xi>t\}}{v(t)}}\bigg(Y(ut)-\sum_{k\geq
0}h(ut-S_k)\1_{\{S_k\leq ut\}}\bigg) ~\fdc~ Z_{\alpha,\beta}(u),
\quad   t \to \infty,
\end{equation*}
where $Z_{\alpha,\beta}$ is a conditionally Gaussian process
as introduced in Definition \ref{definition_process_z}.
\end{assertion}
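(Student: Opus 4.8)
The plan is to condition on the renewal sequence $(S_k)$ and then invoke a central limit theorem for triangular arrays, the randomness of the renewal times surviving into the (conditional) limiting variance. Put $\widetilde{X}_k := X_k-h$, so that, by \eqref{eq:Y(t)}, the quantity to be normalised equals $\sum_{k=0}^{\nu(ut)-1}\widetilde{X}_{k+1}(ut-S_k)$, a sum of centred terms which, conditionally on $\F_\xi := \sigma(\xi_1,\xi_2,\ldots)$, are independent over $k$: indeed $X$ is independent of $\xi$ and the pairs $(X_k,\xi_k)$ are i.i.d., so given $\F_\xi$ the processes $X_1,X_2,\ldots$ are still i.i.d.\ copies of $X$. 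Fix $0<u_1<\cdots<u_n$ and $\theta_1,\ldots,\theta_n\in\mr$; by the Cram\'er--Wold device it suffices to treat $\sum_{j=1}^n\theta_j Z_t(u_j)$, which conditionally on $\F_\xi$ is the sum over $k=0,\ldots,\nu(u_nt)-1$ of the independent centred variables $\sqrt{\mmp\{\xi>t\}/v(t)}\,R_{t,k}$, where $R_{t,k} := \sum_{j=1}^n\theta_j\widetilde{X}_{k+1}(u_jt-S_k)\1_{\{S_k\le u_jt\}}$.

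The core of the argument is to identify the limit of the conditional covariances. For $0<u\le w$ one has $\frac{\mmp\{\xi>t\}}{v(t)}\sum_{k=0}^{\nu(ut)-1}f(ut-S_k,wt-S_k)=\mmp\{\xi>t\}\int_{[0,u]}\frac{f((u-y)t,(w-y)t)}{v(t)}\,\mathrm{d}\nu(ty)$, and I would show that, jointly over the finitely many pairs $(u_i,u_j)$, this converges in distribution to $\int_{[0,u]}C(u-y,w-y)\,\mathrm{d}W_\alpha^\leftarrow(y)$. Two ingredients enter. First, the known functional limit theorem for the first-passage process under \eqref{eq:regular_variation_inf_mean}, namely $\mmp\{\xi>t\}\,\nu(t\,\cdot)\Jc W_\alpha^\leftarrow(\cdot)$ with $W_\alpha^\leftarrow$ as in Definition \ref{inverse_subordinator_definition}, which yields vague convergence of the rescaled occupation measures $\mmp\{\xi>t\}\,\mathrm{d}\nu(t\,\cdot)$ towards $\mathrm{d}W_\alpha^\leftarrow$. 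Second, Definition \ref{uniform_regular_variation_in_strips}, which allows one to replace $f((u-y)t,(w-y)t)/v(t)$ by $C(u-y,w-y)$ uniformly for $y$ in compact subsets of $[0,u)$. The delicate point is the behaviour near the endpoint $y=u$: when $\beta\in[-\alpha,0)$ — and critically when $\beta=-\alpha$ — the limiting integrand is unbounded there, so one truncates a small neighbourhood of $y=u$, bounds its contribution using regular variation of $v$ (and, when $\beta=-\alpha$, the two-sided estimate provided by the monotone function $u(\cdot)$ from the hypotheses), and invokes Lemma \ref{integrals}, which both guarantees $\int_{[0,u]}(u-y)^\beta\,\mathrm{d}W_\alpha^\leftarrow(y)<\infty$ a.s.\ (so that $Z_{\alpha,\beta}$ is well defined) and controls the truncation error in the limit. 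The fictitious-regularly-varying case is treated separately and is easier: the off-diagonal conditional covariances vanish and only the diagonal terms, integrals of $(u-y)^\beta$, survive.

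Write $\Sigma_t=\Sigma_t(\theta):=\Var\big[\sum_j\theta_j Z_t(u_j)\mid\F_\xi\big]=\frac{\mmp\{\xi>t\}}{v(t)}\sum_k\me\big[R_{t,k}^2\mid\F_\xi\big]$, so that the previous paragraph gives $\Sigma_t\dod\Sigma:=\sum_{i,j}\theta_i\theta_j\int_{[0,u_i\wedge u_j]}C(u_i-y,u_j-y)\,\mathrm{d}W_\alpha^\leftarrow(y)$. Next I would verify the conditional Lindeberg condition: $\frac{\mmp\{\xi>t\}}{v(t)}\sum_k\me\big[R_{t,k}^2\1_{\{|R_{t,k}|>\varepsilon\sqrt{v(t)/\mmp\{\xi>t\}}\}}\,\big|\,\F_\xi\big]\tp 0$ for every $\varepsilon>0$. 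Each summand is dominated by a constant multiple of $\sum_{j=1}^n v_{c\varepsilon}(u_jt-S_k)$ for a suitable $c>0$ — by the Cauchy--Schwarz inequality together with the comparison, valid for large $t$, of the threshold $\varepsilon\sqrt{v(t)/\mmp\{\xi>t\}}$ with the threshold appearing in \eqref{eq:stable Lindeberg X-h} at $s=u_jt-S_k$, since $v(s)/\mmp\{\xi>s\}$ is regularly varying of index $\beta+\alpha\ge 0$ (again using monotonicity of $u(\cdot)$ when $\beta=-\alpha$) — and then $\frac{\mmp\{\xi>t\}}{v(t)}\sum_k v_{c\varepsilon}(u_jt-S_k)\tp 0$ follows from $v_{c\varepsilon}(s)=o(v(s))$ and the tightness, already established, of $\frac{\mmp\{\xi>t\}}{v(t)}\sum_k v(u_jt-S_k)$, plus the same endpoint control as above. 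Granting $\Sigma_t\dod\Sigma$ and the conditional Lindeberg condition, the Lindeberg--Feller theorem with random limiting variance (equivalently, a direct quantitative triangular-array estimate), applied inside the conditional expectation, gives $\big|\me[\exp(\mathrm{i}\sum_j\theta_j Z_t(u_j))\mid\F_\xi]-\exp(-\tfrac12\Sigma_t)\big|\tp 0$. Since $|\exp(-\tfrac12\Sigma_t)|\le 1$ and $\exp(-\tfrac12\Sigma_t)\dod\exp(-\tfrac12\Sigma)$, bounded convergence yields $\me[\exp(\mathrm{i}\sum_j\theta_j Z_t(u_j))]\to\me[\exp(-\tfrac12\Sigma)]$, which is exactly the characteristic function of $(Z_{\alpha,\beta}(u_1),\ldots,Z_{\alpha,\beta}(u_n))$ by Definition \ref{definition_process_z}; this is the asserted convergence of finite-dimensional distributions.

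The main obstacle is the second step: coupling the weak convergence of the random occupation measures $\mmp\{\xi>t\}\,\mathrm{d}\nu(t\,\cdot)$ — which holds only in a Skorokhod topology and whose limit $\mathrm{d}W_\alpha^\leftarrow$ is a genuinely random, purely atomic measure — with an integrand that, in the critical case $\beta=-\alpha$, blows up at the integration endpoint $y=u$. Making the truncation near $y=u$ rigorous, uniformly in $t$ and in the limit, and justifying the a.s.\ finiteness $\int_{[0,u]}(u-y)^{-\alpha}\,\mathrm{d}W_\alpha^\leftarrow(y)<\infty$ via Lemma \ref{integrals}, is where the real work lies; everything else is a conditional application of classical triangular-array limit theory together with routine manipulations of regularly varying functions.
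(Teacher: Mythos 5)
Your proposal is correct and follows essentially the same route as the paper: condition on $\sigma((S_k)_{k\ge 0})$, apply a conditional triangular-array CLT (the paper's Lemma \ref{essential2}), identify the limit of the conditional variances as $\frac{\mmp\{\xi>t\}}{v(t)}\int f((u_i-y)t,(u_j-y)t)\,{\rm d}\nu(ty) \stackrel{\mathrm{d}}{\to} \int_{[0,u_i]}C(u_i-y,u_j-y)\,{\rm d}W^\leftarrow_\alpha(y)$ via \eqref{weak}, uniform regular variation in strips and an endpoint truncation controlled by renewal estimates (the paper's Lemma \ref{principal}, Lemma \ref{Lem:convergence of 1st moment at t=1} and Lemma \ref{Lem:Thm 4.2 of Billingsley}), and verify the conditional Lindeberg condition from \eqref{eq:stable Lindeberg X-h} using monotonicity of $r(t)=v(t)/\mmp\{\xi>t\}$. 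One cosmetic slip in your closing remarks: $W^\leftarrow_\alpha$ is a.s.\ continuous, so ${\rm d}W^\leftarrow_\alpha$ is atomless (singular, not purely atomic), which is precisely what makes the continuous-mapping step for the integrals go through.
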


\begin{rem}
There is an interesting special case of Proposition
\ref{Prop:mu=infty} in which the finite-dimensional distributions
of $Y$ converge weakly, i.e., without normalization and centering.
Namely, if $h(t)\equiv 0$,
$\lim_{t \to \infty} v(t)/\mmp\{\xi>t\}=c$ for some $c>0$, and the assumptions of
Proposition \ref{Prop:mu=infty} hold (note that $\beta=-\alpha$
and one may take $u(t)\equiv c$), then \begin{equation*} Y(ut)
~\fdc~ \sqrt{c}Z_{\alpha,\alpha}(u).
\end{equation*}
\end{rem}

When $h(t)=\me [X(t)]$ is not identically zero, the centerings
used in Propositions \ref{Prop:mu<infty} and \ref{Prop:mu=infty} are random
which is undesirable.
Theorem \ref{Thm:mu<infty} (case $\me\xi<\infty$) and Theorem
\ref{Thm:mu=infty} (case $\me\xi=\infty$) stated below give limit
results with non-random centerings. These are obtained by
combining the results concerning weak convergence of the second
summand in \eqref{decomp} with Proposition \ref{Prop:mu<infty} and Proposition \ref{Prop:mu=infty}, respectively.

\subsection{Domains of attraction}

To fix notation for our main results, we recall here that the law of $\xi$ belongs to the domain of attraction
of a $2$-stable (normal) law if, and only if, either $\sigma^2 := \Var \xi < \infty$, or $\Var \xi=\infty$ and
\begin{equation}    \label{eq:slowly varying 2nd moment}
\me[\xi^2 \1_{\{\xi \leq t\}}] \sim \ell^\ast(t)
\end{equation}
for some $\ell^\ast$. Further, the law of $\xi$ belongs to the
domain of attraction of an $\alpha$-stable law, $\alpha \in (0,2)$
if, and only if,
\begin{equation}\label{eq:domain alpha}
\mmp\{\xi>t\} \sim t^{-\alpha}\ell^\ast(t)
\end{equation}
for some $\ell^\ast$. In the present paper, we do not treat the
case $\alpha=1$, for it is technically more complicated than the
others and does not shed any new light on weak convergence of
random processes with immigration.

If $\mu=\me\xi=\infty$, then necessarily $\alpha\in (0,1)$
(because we excluded the case $\alpha=1$) and according to
Corollary 3.4 in \citep{Meerschaert+Scheffler:2004} we have
\begin{equation}\label{weak}
\mathbb{P}\{\xi>t\}\nu(ut) ~\Rightarrow~ W^\leftarrow_\alpha(u)
\end{equation}
in the $J_1$-topology on $D$.

If $\mu<\infty$, then necessarily $\alpha\in (1,2]$ (where
$\alpha=2$ corresponds to the case of attraction to a normal law)
and according to Theorem 5.3.1 and Theorem 5.3.2 in
\citep{Gut:2009} or Section 7.3.1 in \citep{Whitt:2002} we have
\begin{equation}    \label{eq:FLT for N(t)}
\frac{\nu(ut)-\mu^{-1}ut}{\mu^{-1-1/\alpha}c(t)} ~\Rightarrow~
\mathcal{S}_\alpha(u),
\end{equation}
where
\begin{itemize}
\item
if $\sigma^2<\infty$, then $\mathcal{S}_2:=(\mathcal{S}_2(u))_{u
\geq 0}$ is a Brownian motion; $c(t)=\sigma \sqrt{t}$ and the
convergence takes place in the $J_1$-topology on $D$;
\item
if $\sigma^2=\infty$ and \eqref{eq:slowly varying 2nd moment}
holds, then $c(t)$ is some positive continuous function such that
\begin{equation*}
\lim_{t \to \infty} t \ell^\ast(c(t)) / c(t)^2 = 1,
\end{equation*}
and the convergence takes place in the $J_1$- topology on $D$;
\item if in \eqref{eq:domain alpha} $\alpha\in (1,2)$,
then $\mathcal{S}_\alpha:=(\mathcal{S}_\alpha(u))_{u \geq 0}$ is a
spectrally negative $\alpha$-stable L\'{e}vy process such that
$\mathcal{S}_\alpha(1)$ has the characteristic function
\begin{equation*}
\mathbb{E} [\exp(iz\mathcal{S}_\alpha(1))] = \exp\{-|z|^\alpha
\Gamma(1-\alpha)(\cos(\pi\alpha/2)+{\rm
i}\sign(z)\sin(\pi\alpha/2))\},\quad z\in\mr,
\end{equation*}
where $\Gamma(\cdot)$ denotes Euler's gamma function; $c(t)$ is
some positive continuous function satisfying
\begin{equation*}
\lim_{t \to \infty} t\ell^\ast(c(t))/c(t)^{\alpha}=1,
\end{equation*}
and the convergence takes place in the $M_1$-topology on $D$.
\end{itemize}
In any case, $c(t)$ is regularly varying at $\infty$ of index $1/\alpha$, see Lemma \ref{ct}.
We refer to \citep{Whitt:2002} for extensive information concerning
both the $J_1$- and $M_1$- convergence on $D$.

\subsection{Scaling limits of random processes with immigration}

\begin{thm} \label{Thm:mu<infty}
Assume that the law of $\xi$ belongs to the domain of attraction
of an $\alpha$-stable law, $\alpha\in (1,2]$, that $c$ is as in \eqref{eq:FLT for N(t)},
that $h$ is eventually monotone and not identically zero, and that the
following limit
$$
p:=\lim_{t\to\infty}\frac{c(t)^2h(t)^2}{\int_0^t v(y)\dy +
c(t)^2h(t)^2}\in [0,1],
$$
exists. Assume further that
\begin{itemize}
\item
if $p<1$, then the assumptions of Proposition \ref{Prop:mu<infty} hold;
\item
if $p>0$, then $h(t) \sim t^\rho\widehat{\ell}(t)$ as $t\to\infty$
for some $\rho>-1/\alpha$ and some $\widehat{\ell}$;
\item
if $p=1$, then $\lim_{t \to \infty} \int_0^t v(y) \dy = \infty$ and there exists
a positive monotone function $u$ such that $v(t) \sim u(t)$,
$t\to\infty$, or $v$ is directly Riemann integrable on $[0,\infty)$;
\item
if $p\in(0,1)$, then $X$ is independent of $\xi$.
\end{itemize}
Then, as $t\to\infty$,
\begin{equation}    \label{eq:convergence A3}
\frac{Y(ut)-\frac{1}{\mu}\int_0^{ut}h(y)\dy}{\sqrt{\int_0^t
v(y)\dy + c(t)^2h(t)^2}} ~\fdc~
\sqrt{\frac{(1-p)(1+\beta)}{\mu}}V_\beta(u)+
\sqrt{p}\mu^{-(\alpha+1)/\alpha}\int_0^u(u-y)^\rho \, {\rm d}
\mathcal{S}_\alpha(y),
\end{equation}
where $V_\beta$ is as in Definition \ref{definition_v_process},
and $\mathcal{S}_\alpha$ is assumed independent of $V_\beta$.
\end{thm}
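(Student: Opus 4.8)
The plan is to work from the decomposition \eqref{decomp} with non-random shift $b(t)=\mu^{-1}\int_0^{t}h(y)\dy$. Dividing by $a_t:=\big(\int_0^{t}v(y)\dy+c(t)^2h(t)^2\big)^{1/2}$ and abbreviating
\begin{equation*}
\Phi_t(u):=Y(ut)-\sum_{k\geq 0}h(ut-S_k)\1_{\{S_k\leq ut\}},\qquad
\Psi_t(u):=\sum_{k\geq 0}h(ut-S_k)\1_{\{S_k\leq ut\}}-\frac{1}{\mu}\int_0^{ut}h(y)\dy,
\end{equation*}
the left-hand side of \eqref{eq:convergence A3} equals $\Phi_t(u)/a_t+\Psi_t(u)/a_t$. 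When $p<1$ the assumptions of Proposition \ref{Prop:mu<infty} are in force, so in particular $v(t)=f(t,t)$ is regularly varying of index $\beta\in(-1,\infty)$, whence $a_t\to\infty$ and, by Karamata's theorem, $tv(t)\sim(1+\beta)\int_0^{t}v(y)\dy$ and $\int_0^{ut}v(y)\dy\sim u^{1+\beta}\int_0^{t}v(y)\dy$ for every $u>0$; in the directly Riemann integrable alternative allowed when $p=1$, $\int_0^{t}v(y)\dy$ stays bounded while $c(t)^2h(t)^2\to\infty$. Writing $p_t:=c(t)^2h(t)^2/a_t^2\to p$ it follows that
\begin{equation}    \label{eq:norm ratios}
\frac{\sqrt{\mu^{-1}tv(t)}}{a_t}\ \longrightarrow\ \sqrt{\frac{(1+\beta)(1-p)}{\mu}},\qquad \frac{c(t)\,|h(t)|}{a_t}\ \longrightarrow\ \sqrt{p}.
\end{equation}

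Next I would analyse the two summands separately. The summand $\Psi_t$ depends only on $(\xi_k)_{k\geq1}$. If $p>0$, then $h(t)\sim t^\rho\widehat\ell(t)$ with $\rho>-1/\alpha$ and $h$ is eventually monotone (hence eventually of one sign), and the finite-dimensional convergence $\Psi_t(u)/(c(t)h(t))\fdc\mu^{-(\alpha+1)/\alpha}\int_0^u(u-y)^\rho\,{\rm d}\mathcal{S}_\alpha(y)$ follows from the main results of \citep{Iksanov:2013} (when $h$ is eventually increasing) and \citep{Iksanov+Marynych+Meiners:2014} (when $h$ is eventually decreasing); it rests on \eqref{eq:FLT for N(t)}, on the identity $\Psi_t(u)=\int_{[0,u]}h((u-s)t)\,{\rm d}_s\big(\nu(st)-\mu^{-1}st\big)$, and on the $\alpha$-stable integral being well-defined because $\rho\alpha>-1$. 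Combined with \eqref{eq:norm ratios}, this gives $\Psi_t(u)/a_t\fdc\sqrt{p}\,\mu^{-(\alpha+1)/\alpha}\int_0^u(u-y)^\rho\,{\rm d}\mathcal{S}_\alpha(y)$. If $p=0$, i.e.\ $c(t)^2h(t)^2=o(\int_0^t v(y)\dy)$, the same identity together with \eqref{eq:FLT for N(t)} and the eventual monotonicity of $h$ (which controls $\sup_{[\varepsilon t,ut]}|h|$ in terms of $|h(t)|$, using the polynomial growth of $h$ forced by $p=0$) shows that $\Psi_t(u)/a_t\tp0$. As for $\Phi_t$: when $p<1$, Proposition \ref{Prop:mu<infty} yields $\Phi_t(u)/\sqrt{\mu^{-1}tv(t)}\fdc V_\beta(u)$, hence by \eqref{eq:norm ratios}, $\Phi_t(u)/a_t\fdc\sqrt{(1+\beta)(1-p)/\mu}\,V_\beta(u)$; when $p=1$ those assumptions need not hold, so I would estimate $\Phi_t$ directly. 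Because the pairs $(X_k,\xi_k)$ are i.i.d.\ and $h(s)=\me[X(s)]$, the summands of $\Phi_t(u)$ are orthogonal, so $\me[\Phi_t(u)^2]=\me\big[\sum_{k\geq0}v(ut-S_k)\1_{\{S_k\leq ut\}}\big]=\int_{[0,ut]}v(ut-y)\,{\rm d}U(y)$, $U$ being the renewal function of $(S_k)$; by the standard bound $\sup_{s\geq0}(U(s+1)-U(s))<\infty$ this is $O(\int_0^{ut}v(y)\dy)=o(c(t)^2h(t)^2)$ when $\int_0^t v(y)\dy\to\infty$, and is bounded when $v$ is directly Riemann integrable, so in either sub-case $\Phi_t(u)/a_t\tp0$.

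It remains to combine the summands into a joint finite-dimensional statement. If $p\in\{0,1\}$ this is immediate from Slutsky's lemma, one summand converging and the other tending to $0$ in probability, and the independence in the limit is vacuous. The essential case is $p\in(0,1)$, where $X$ is independent of $\xi$ and both summands survive; here the joint convergence with $V_\beta$ independent of $\mathcal{S}_\alpha$ has to be proved. I would condition on $\mg:=\sigma(\xi_1,\xi_2,\ldots)$: the vector $(\Psi_t(u_1)/a_t,\ldots,\Psi_t(u_n)/a_t)$ is $\mg$-measurable, while, given $\mg$, the variable $\Phi_t(u_i)=\sum_{k=0}^{\nu(u_it)-1}\big(X_{k+1}(u_it-S_k)-h(u_it-S_k)\big)$ is, for each $i$, a sum of conditionally independent and — since $X$ is independent of $\xi$ — conditionally centered terms, with conditional covariance $\me[\Phi_t(u_i)\Phi_t(u_j)\mid\mg]=\sum_{k\geq0}f(u_it-S_k,u_jt-S_k)\1_{\{S_k\leq(u_i\wedge u_j)t\}}$. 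Using the uniform-in-strips regular variation of $f$ and a second-moment estimate for the renewal point process $\{S_k\}$, one shows that this conditional covariance, divided by $\mu^{-1}tv(t)$, converges in probability to the non-random quantity $\int_0^{u_i\wedge u_j}C(u_i-y,u_j-y)\,\dy=\me[V_\beta(u_i)V_\beta(u_j)]$, while the conditional Lindeberg condition follows from \eqref{eq:Lindeberg X-h}. Hence the conditional law of $\Phi_t/\sqrt{\mu^{-1}tv(t)}$ given $\mg$ converges in probability to the (deterministic) law of $V_\beta$; together with the $\mg$-measurability and the already established convergence of $\Psi_t/a_t$ this gives joint finite-dimensional convergence of $(\Phi_t/a_t,\Psi_t/a_t)$ to the pair $\big(\sqrt{(1+\beta)(1-p)/\mu}\,V_\beta,\ \sqrt{p}\,\mu^{-(\alpha+1)/\alpha}\int_0^{u}(u-y)^\rho\,{\rm d}\mathcal{S}_\alpha(y)\big)$ with the two components independent, and \eqref{eq:convergence A3} follows by adding them.

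The main obstacle is the $p\in(0,1)$ step, i.e.\ upgrading the finite-dimensional CLT of Proposition \ref{Prop:mu<infty} to a conditional (stable) CLT given $\mg$. Its crux is the in-probability convergence of the conditional covariances $\frac{1}{\mu^{-1}tv(t)}\sum_{k\geq0}f(u_it-S_k,u_jt-S_k)\1_{\{S_k\leq(u_i\wedge u_j)t\}}$ to $\int_0^{u_i\wedge u_j}C(u_i-y,u_j-y)\,\dy$: this needs the uniform-in-strips regular variation of $f$ in order to replace $f(\cdot,\cdot)$ by $v(t)\,C(\cdot,\cdot)$ uniformly along the arguments that occur, and a concentration estimate showing that the empirical measure $\frac{1}{\mu^{-1}t}\sum_{k\geq0}\delta_{S_k/t}\1_{\{S_k\leq ut\}}$ is close to Lebesgue measure on $[0,u]$ — which is precisely where the finiteness of $\mu$, and the resulting degeneracy of the randomness carried by $\xi$, is used.
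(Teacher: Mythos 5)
Your plan follows the same architecture as the paper: the decomposition \eqref{decomp} with $b(t)=\mu^{-1}\int_0^t h(y)\dy$, Proposition \ref{Prop:mu<infty} for the martingale part when $p<1$, the results of \citep{Iksanov:2013} and \citep{Iksanov+Marynych+Meiners:2014} for the $h$-part when $p>0$, a Chebyshev/renewal-function bound for the martingale part when $p=1$ (the paper's Lemma \ref{Lem:convergence to 0}), and, for $p\in(0,1)$, a conditional CLT given $\sigma(\xi_1,\xi_2,\ldots)$ --- which is exactly the paper's route via Lemma \ref{essential2}, and which is in fact the routine part, since the in-probability convergence of the conditional covariances and the conditional Lindeberg condition are literally relations \eqref{eq:mgale CLT1} and \eqref{eq:mgale CLT2} already established in the proof of Proposition \ref{Prop:mu<infty}. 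The genuine gap is in the case $p=0$, where you must show $\Psi_t(u)/a_t\tp 0$, i.e. $a_t^{-1}\int_{[0,ut]}h(ut-y)\,{\rm d}\big(\nu(y)-y/\mu\big)\tp 0$ with $a_t\sim\big(\int_0^t v(y)\dy\big)^{1/2}$. Your justification --- that eventual monotonicity controls $\sup_{[\varepsilon t,ut]}|h|$ in terms of $|h(t)|$ ``using the polynomial growth of $h$ forced by $p=0$'' --- does not hold up: for $p=0$ no regular variation of $h$ is assumed at all (the hypothesis $h(t)\sim t^\rho\widehat{\ell}(t)$ is imposed only when $p>0$), and $p=0$ gives only the one-sided bound $c(t)|h(t)|=o(a_t)$; it forces no growth, and in the critical regime $h$ decays.

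Quantitatively, the estimate your sketch suggests fails. After integration by parts the bound available from \eqref{eq:FLT for N(t)} alone is of order $\sup_{0\le s\le ut}|\nu(s)-s/\mu|$ times the total variation of $h$ on $[0,ut]$ plus a boundary term; for eventually decreasing, eventually nonnegative $h$ the total variation is $O(1)$, so the bound is $O_{\mmp}(c(t))$, and $c(t)=o(a_t)$ is false in general because $a_t$ is regularly varying of index $(1+\beta)/2$, which may be smaller than $1/\alpha$. The paper's own first example in Section \ref{sec:Applications} realizes this: $\alpha=2$, $X(t)=\1_{\{\eta>t\}}$ with $\mmp\{\eta>t\}\sim t^{-1/2}$ gives $\beta=-1/2$, $a_t\asymp t^{1/4}$, $c(t)\asymp t^{1/2}$, and $p=0$, so the crude sup-times-variation bound cannot close the case. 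What is needed is the finer control of the increments $\nu(ut)-\nu((ut-y)-)-y/\mu$ in $y$ (the $y^{1/\alpha-\delta}$-weighted estimates of Lemma 3.2 and the proof of Theorem 2.7 in \citep{Iksanov+Marynych+Meiners:2014}), which is precisely what the paper's Lemma \ref{main4} provides under nothing more than eventual monotonicity and eventual nonnegativity of $h$ and $c(t)h(t)=o(r(t))$ for a regularly varying $r$ of positive index (the eventually increasing case also needs its own argument, via the measures $\rho_t\Rightarrow\delta_0$, rather than a naive sup bound). Until you prove or import such a lemma, your $p=0$ case is incomplete; the remaining cases of your proposal are sound and essentially coincide with the paper's proof, up to minor glosses (e.g.\ the $O\big(\int_0^{ut}v(y)\dy\big)$ bound on $\int_{[0,ut]}v(ut-y)\,{\rm d}U(y)$ for $p=1$ does require the assumed monotone equivalent or direct Riemann integrability of $v$, as in the paper's Lemmas \ref{Lem:Sgibnev} and \ref{equiv:Lemma}).
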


\begin{thm} \label{Thm:mu=infty}
Suppose that \eqref{eq:domain alpha} holds for $\alpha\in(0,1)$
and that $h$ is not identically zero.
Assume further that the following limit
$$
q:=\lim_{t\to\infty}\frac{h(t)^2}{v(t)\mmp\{\xi>t\}+h(t)^2}\in[0,1]
$$
exists and that
\begin{itemize}
	\item
        if $q<1$, then the assumptions of Proposition \ref{Prop:mu=infty} hold
(with the same $\alpha$ as above);
  \item
        if $q=1$, then $h(t)\sim t^\rho\widehat{\ell}(t)$,
$t\to\infty$ for some $\rho\geq -\alpha$ and some
$\widehat{\ell}$; if $\rho=-\alpha$, then there exists a positive
increasing function $w$ such that $\lim_{t \to \infty} w(t)=\infty$ and $\lim_{t \to \infty} \frac{h(t)}{\mmp\{\xi>t\}w(t)}=1$.
\end{itemize}
Then, setting $\rho:=(\beta-\alpha)/2$ when $q\in(0,1)$,
\begin{equation*}
\frac{\mmp\{\xi>t\}Y(ut)}{\sqrt{v(t)\mmp\{\xi>t\}+h(t)^2}} ~\fdc~
\sqrt{1-q}Z_{\alpha,\beta}(u)+\sqrt{q}\int_{[0,u]}(u-y)^{\rho}
\, {\rm d}W^\leftarrow_\alpha(y), \quad t\to\infty,
\end{equation*}
where $Z_{\alpha,\beta}$ is as in Definition
\ref{definition_process_z}, and $W^\leftarrow_\alpha$ under the
integral sign is the same as in the definition of
$Z_{\alpha,\beta}$. In particular, the summands defining the
limit process are dependent.
\end{thm}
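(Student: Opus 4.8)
The plan is to push the decomposition \eqref{decomp} through at time $ut$. Set $D(t):=\sqrt{v(t)\mmp\{\xi>t\}+h(t)^2}$ and, assuming without loss of generality that $h(t)\geq 0$ for large $t$, write
$$
\frac{\mmp\{\xi>t\}\,Y(ut)}{D(t)} ~=~ c_1(t)\,Z_t(u) + c_2(t)\,I_t(u),
$$
where $Z_t(u):=\sqrt{\mmp\{\xi>t\}/v(t)}\,\bigl(Y(ut)-\sum_{k\geq 0}h(ut-S_k)\1_{\{S_k\leq ut\}}\bigr)$ is the normalized first summand, $I_t(u):=\bigl(\mmp\{\xi>t\}/h(t)\bigr)\sum_{k\geq 0}h(ut-S_k)\1_{\{S_k\leq ut\}}$ the normalized second summand, and $c_1(t):=\sqrt{\mmp\{\xi>t\}v(t)}/D(t)\to\sqrt{1-q}$, $c_2(t):=h(t)/D(t)\to\sqrt q$. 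Two inputs are available: Proposition \ref{Prop:mu=infty} gives $Z_t\fdc Z_{\alpha,\beta}$ whenever its hypotheses hold, i.e.\ whenever $q<1$; and the weak convergence results for the second summand established in \citep{Iksanov:2013,Iksanov+Marynych+Meiners:2014} give $I_t\fdc\int_{[0,\cdot]}(\cdot-y)^{\rho}\,{\rm d}W^{\leftarrow}_{\alpha}(y)$ whenever $h$ is regularly varying of index $\rho\geq-\alpha$. The latter is automatic in the range $q\in(0,1)$, because then $h(t)^2\sim\tfrac{q}{1-q}v(t)\mmp\{\xi>t\}$ is regularly varying of index $\beta-\alpha$, so that $h$ is regularly varying of index $\rho=(\beta-\alpha)/2\geq-\alpha$; and it is assumed when $q=1$. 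When $q\in\{0,1\}$ one of $c_1,c_2$ vanishes, so the theorem reduces to the convergence of the dominant normalized summand together with the assertion that the complementary normalized summand tends to $0$ in probability; the latter is a routine moment estimate --- for $q=1$ one uses that $\me\bigl[\bigl(Y(ut)-\sum_{k\geq 0}h(ut-S_k)\1_{\{S_k\leq ut\}}\bigr)^2\bigr]=\int_{[0,ut]}v(ut-s)\,{\rm d}\me\nu(s)$, which holds even when $X$ and $\xi$ are dependent since the summands constituting the first summand are pairwise uncorrelated, and for $q=0$ a first-moment bound on the second summand --- combined with standard asymptotics for the renewal function $\me\nu$. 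I would dispose of $q\in\{0,1\}$ first and then concentrate on $q\in(0,1)$.

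The heart of the matter is $q\in(0,1)$: there the hypotheses of Proposition \ref{Prop:mu=infty} are in force, so in particular $X$ is independent of $\xi$, and since $c_1(t),c_2(t)$ are deterministic and convergent it suffices, by the continuous mapping theorem, to prove the \emph{joint} finite-dimensional convergence of $(Z_t,I_t)$ to $\bigl(Z_{\alpha,\beta}(\cdot),\,I_\infty(\cdot)\bigr)$, where $I_\infty(u):=\int_{[0,u]}(u-y)^{\rho}\,{\rm d}W^{\leftarrow}_{\alpha}(y)$ and, crucially, the same realization of $W^{\leftarrow}_{\alpha}$ is used in $Z_{\alpha,\beta}$ and in $I_\infty$. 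I would prove this by conditioning on $\F_\xi:=\sigma(\xi_1,\xi_2,\ldots)$. Fix $0<u_1<\cdots<u_n$. Given $\F_\xi$, the vector $(I_t(u_j))_j$ is deterministic, whereas $(Z_t(u_j))_j$ equals $\sqrt{\mmp\{\xi>t\}/v(t)}$ times a sum over $k=0,\dots,\nu(u_nt)-1$ of the $\mr^n$-valued summands $\bigl(X_{k+1}(u_jt-S_k)-h(u_jt-S_k)\bigr)_{j=1}^n$, which are conditionally independent in $k$ (distinct $k$ use distinct independent copies of $X$) and conditionally centered ($X_{k+1}$ is independent of $\F_\xi$). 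A conditional multivariate Lindeberg--Feller theorem --- whose Lindeberg condition follows from \eqref{eq:stable Lindeberg X-h} exactly as in the proof of Proposition \ref{Prop:mu=infty} --- then gives, for every $s\in\mr^n$,
$$
\me\Bigl[\exp\Bigl({\rm i}\sum_{j} s_j Z_t(u_j)\Bigr)\,\Big|\,\F_\xi\Bigr] - \exp\Bigl(-\tfrac12\sum_{j,k} s_j s_k\,\Sigma_t(u_j,u_k)\Bigr) ~\tp~ 0,
$$
where $\Sigma_t(u,w):=\tfrac{\mmp\{\xi>t\}}{v(t)}\sum_{k\geq 0}f(ut-S_k,wt-S_k)\1_{\{S_k\leq ut\}}$ is $\F_\xi$-measurable (in the fictitious regularly varying regime one uses the diagonal entries $\tfrac{\mmp\{\xi>t\}}{v(t)}\sum_{k\geq 0}v(ut-S_k)\1_{\{S_k\leq ut\}}$ and shows the off-diagonal ones negligible). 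Multiplying by $\exp\bigl({\rm i}\sum_j r_j I_t(u_j)\bigr)$, taking expectations, and using that characteristic functions are bounded by $1$ together with dominated convergence, the joint characteristic function of $(Z_t,I_t)$ at $(u_1,\dots,u_n)$ converges to $\me\bigl[\exp\bigl({\rm i}\sum_j r_j I_\infty(u_j)\bigr)\exp\bigl(-\tfrac12\sum_{j,k}s_js_k\,\Sigma_\infty(u_j,u_k)\bigr)\bigr]$ as soon as one establishes the joint convergence in distribution $(\Sigma_t,I_t)\Rightarrow(\Sigma_\infty,I_\infty)$ with $\Sigma_\infty(u,w)=\int_{[0,u\wedge w]}C(u-y,w-y)\,{\rm d}W^{\leftarrow}_{\alpha}(y)$, built from one $W^{\leftarrow}_{\alpha}$. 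Conditioning that limit once more, now on $\sigma(W^{\leftarrow}_{\alpha})$, identifies it as the joint characteristic function of $\bigl((Z_{\alpha,\beta}(u_j))_j,(I_\infty(u_j))_j\bigr)$ in the sense of Definition \ref{definition_process_z}, which is exactly what is needed.

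It thus remains to prove $(\Sigma_t,I_t)\Rightarrow(\Sigma_\infty,I_\infty)$. By \eqref{weak}, $\mmp\{\xi>t\}\nu(\cdot\,t)\Rightarrow W^{\leftarrow}_{\alpha}$ in the $J_1$-topology on $D$, and after the change of variable $s=ty$ both $\Sigma_t(u,w)$ and $I_t(u)$ have the form $\int_{[0,u]}\psi_t(y)\,{\rm d}\bigl(\mmp\{\xi>t\}\nu(ty)\bigr)$ with $\psi_t(y)=f\bigl(t(u-y),t(w-y)\bigr)/v(t)$, respectively $\psi_t(y)=h\bigl(t(u-y)\bigr)/h(t)$; the crucial fact is that $\psi_t$ converges locally uniformly on $[0,u)$ to its limit --- for $\Sigma_t$ this is precisely the uniform-in-strips hypothesis \eqref{0040} of Definition \ref{uniform_regular_variation_in_strips} applied on $[0,u-\delta]$ with constant ``height'' $w-u>0$, and for $I_t$ it is the uniform convergence theorem for regularly varying functions. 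Controlling the edge $y\uparrow u$ by Potter-type bounds (legitimate because $W^{\leftarrow}_{\alpha}$ is a.s.\ continuous and because $\Sigma_\infty(u,w),I_\infty(u)$ are a.s.\ finite, cf.\ Lemma \ref{integrals}), a generalized continuous-mapping argument for these $t$-dependent functionals of $\mmp\{\xi>t\}\nu(\cdot\,t)$ yields the desired joint convergence in distribution, which closes the proof. I expect this coupled passage to the limit to be the main obstacle: unlike the finite-mean situation of Theorem \ref{Thm:mu<infty}, where the randomness of $\nu$ degenerates and the two summands in \eqref{decomp} become asymptotically independent, when $\mu=\infty$ both the limiting conditional covariance $\Sigma_\infty$ of the first summand and the limiting integral $I_\infty$ of the second are nontrivial functionals of the \emph{single} random path $W^{\leftarrow}_{\alpha}$; consequently the second summand must be carried through the conditional central limit theorem rather than handled separately, and care is required so that the $J_1$-convergence of $\nu(\cdot\,t)$, whose paths jump, interacts correctly with the Stieltjes integrals (continuity at the a.s.\ continuous limit) and with the diagonal-only conditional covariance structure surfacing in the fictitious regularly varying regime.
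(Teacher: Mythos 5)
Your proposal is correct and follows essentially the same route as the paper: the cases $q=0$ and $q=1$ are handled by Markov/Chebyshev moment bounds plus the known limit theorems for the shot noise with deterministic response, and the case $q\in(0,1)$ by conditioning on $\sigma(\xi_1,\xi_2,\dots)$, a conditional Lindeberg central limit theorem (the paper's Lemma \ref{essential2}) with the Lindeberg condition inherited from \eqref{eq:stable Lindeberg X-h}, and the joint convergence of the conditional covariance and the normalized second summand as integrals against $\mmp\{\xi>t\}\nu(t\,\cdot)\Rightarrow W^\leftarrow_\alpha$, followed by dominated convergence. The only differences are cosmetic: you state the key joint convergence as convergence of the pair $(\Sigma_t,I_t)$ while the paper packages it via the Cram\'er--Wold device in Lemma \ref{principal}, and you control the edge $y\uparrow u$ by Potter-type bounds where the paper uses renewal-function estimates (Lemma \ref{Lem:convergence of 1st moment at t=1}) together with Lemma \ref{Lem:Thm 4.2 of Billingsley}.
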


There is a simple situation where the weak convergence of
finite-dimensional distributions obtained in Theorem
\ref{Thm:mu=infty} implies the $J_1$-convergence on $D$. Of
course, the case where the limit process in Proposition
\ref{Prop:mu=infty} is a conditional white noise (equivalently,
$C(u,w)=0$ for $u\neq w$) must be eliminated as no version of such
a process belongs to $D$.
\begin{cor} \label{Cor:J_1 convergence}
Let $X(t)$ be almost surely increasing with $\lim_{t \to \infty} X(t)\in
(0,\infty]$ almost surely. Assume that the assumptions of Theorem
\ref{Thm:mu=infty} are in force with the exception that in the
case $q<1$ the conditions on the function $f(u,w)$ are replaced by
the condition that the function $(u,w)\mapsto \me [X(u)X(w)]$ is
regularly varying in $\mr_+^2$ of index $\beta$ with limit
function $C$. Then the limit relations of Theorem
\ref{Thm:mu=infty} hold in the sense of weak convergence in the
$J_1$-topology on $D$, where $Z_{\alpha,\beta}(0)=0$ is defined
as the limit in probability of $Z_{\alpha,\beta}(u)$ as $u\downarrow 0$.
\end{cor}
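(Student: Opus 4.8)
The plan is to exploit the fact that $Y$ has a.s.\ nondecreasing paths under the present hypotheses. Since $X$ is a.s.\ increasing and $X(t)=0$ for $t<0$, right-continuity gives $X(0)\geq X(0-)=0$, so $X\geq 0$ everywhere, and every summand $X_{k+1}(ut-S_k)\1_{\{S_k\leq ut\}}$ in \eqref{eq:Y(t)} is nonnegative and nondecreasing in $u$. Hence, for each fixed $t>0$, the rescaled process
\[
\widetilde Y_t(u):=\frac{\mmp\{\xi>t\}\,Y(ut)}{\sqrt{v(t)\mmp\{\xi>t\}+h(t)^2}},\qquad u\geq 0,
\]
is a.s.\ nondecreasing in $u$, because its normalizing factor is positive and does not depend on $u$. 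The proof then proceeds in three steps: (i) obtain convergence of the finite-dimensional distributions of $\widetilde Y_t$ from Theorem~\ref{Thm:mu=infty}; (ii) identify the limit as an a.s.\ continuous, nondecreasing process vanishing at the origin; (iii) invoke the standard upgrade of finite-dimensional convergence to $J_1$-convergence for nondecreasing processes with a continuous limit.

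For step (i), Theorem~\ref{Thm:mu=infty} applies: when $q<1$, the replacement hypothesis that $(u,w)\mapsto\me[X(u)X(w)]$ is regularly varying with limit function $C$ implies, via $h(t)^2=o(v(t))$ (which follows from $q<1$), that $f$ is regularly varying with the same limit function $C$, so Theorem~\ref{Thm:mu=infty} gives $\widetilde Y_t\fdc\sqrt{1-q}\,Z_{\alpha,\beta}(u)+\sqrt{q}\int_{[0,u]}(u-y)^{\rho}\,{\rm d}W^\leftarrow_\alpha(y)$. For step (ii) I first claim that $q=1$. By the portmanteau theorem applied to the closed sets $\{x_1\leq\cdots\leq x_n\}$, the finite-dimensional limit inherits the monotonicity of $\widetilde Y_t$ and is therefore a.s.\ nondecreasing. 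But this is impossible when $q<1$: conditionally on $W^\leftarrow_\alpha$, for $u<w$ the increment $Z_{\alpha,\beta}(w)-Z_{\alpha,\beta}(u)$ is centered Gaussian with conditional variance at least $\int_{(u,w]}(w-y)^{\beta}\,{\rm d}W^\leftarrow_\alpha(y)$ (the remaining part being nonnegative by the Cauchy--Schwarz bound $C(a,b)\leq(ab)^{\beta/2}$ and the arithmetic--geometric mean inequality), hence strictly positive on the positive-probability event $\{W^\leftarrow_\alpha(w)>W^\leftarrow_\alpha(u)\}$, whereas $\sqrt{q}\int_{[0,u]}(u-y)^{\rho}\,{\rm d}W^\leftarrow_\alpha(y)$ is $\sigma(W^\leftarrow_\alpha)$-measurable; thus the increment of the limit process over $[u,w]$ would be, conditionally on $W^\leftarrow_\alpha$, a nondegenerate Gaussian shifted by a constant, and hence not a.s.\ nonnegative --- a contradiction. (Alternatively, $q=1$ follows from a short moment estimate: if $q<1$ then $h(t)=o(\sqrt{v(t)})$ and $v(t)\sim\me[X(t)^2]=:t^{\beta}\ell(t)$, with $\beta\geq 0$ since $\me[X(t)^2]$ increases to $\me[(\lim_{s\to\infty}X(s))^2]\in(0,\infty]$; writing $\widetilde X(t):=X(t)t^{-\beta/2}\geq 0$, splitting $\Var\widetilde X(t)$ at the Lindeberg threshold of \eqref{eq:stable Lindeberg X-h} and using $X\geq 0$ to bound the truncated first absolute moment by $2\me[\widetilde X(t)]$ gives $\Var\widetilde X(t)=o(\ell(t))$, contradicting $v(t)\sim t^{\beta}\ell(t)$.) Hence $q=1$, and with $h(t)\sim t^{\rho}\widehat\ell(t)$ we obtain $\widetilde Y_t(u)\fdc Z(u)$, where $Z(u):=\int_{[0,u]}(u-y)^{\rho}\,{\rm d}W^\leftarrow_\alpha(y)$ and $\rho\geq 0$ because $h=\me[X(\cdot)]$ is nondecreasing and not identically zero.

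Since $W^\leftarrow_\alpha$ has a.s.\ continuous, nondecreasing paths with $W^\leftarrow_\alpha(0)=0$ and $\rho\geq 0$, the limit $Z$ is a.s.\ nonnegative, nondecreasing, and continuous: for $u<u'$ one has $|Z(u')-Z(u)|\leq (u')^{\rho}\big(W^\leftarrow_\alpha(u')-W^\leftarrow_\alpha(u)\big)+\big((u')^{\rho}-u^{\rho}\big)W^\leftarrow_\alpha(u')$, and $0\leq Z(u)\leq u^{\rho}W^\leftarrow_\alpha(u)\to 0$ as $u\downarrow 0$, consistent with the convention $Z_{\alpha,\beta}(0)=0$ (well-definedness and path regularity of $Z$ also follow from Lemma~\ref{integrals}). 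For step (iii), the oscillation of the nondecreasing $\widetilde Y_t$ over any partition $0=u_0<\cdots<u_m=M$ is bounded by the telescoping sum $\widetilde Y_t(M)-\widetilde Y_t(0)$, so finite-dimensional convergence to the a.s.\ continuous $Z$, together with uniform continuity of $Z$ on compacts, gives $C$-tightness of $(\widetilde Y_t)$ in $(D,J_1)$; combined with the finite-dimensional convergence this yields $\widetilde Y_t\Rightarrow Z$ in the $J_1$-topology on $D$ (in fact locally uniformly). Alternatively one uses a Skorokhod coupling and the elementary fact that pointwise convergence of monotone functions to a continuous limit is locally uniform. This completes the argument.

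I expect the main obstacle to be step (ii) --- the reduction to $q=1$ --- for which the key observation is that the monotonicity of $Y$ rules out a conditionally Gaussian $Z_{\alpha,\beta}$ term in the limit (such a term has nondegenerate increments conditionally on $W^\leftarrow_\alpha$ and so is never a.s.\ monotone). It is this reduction that makes the limit $\int_{[0,u]}(u-y)^{\rho}\,{\rm d}W^\leftarrow_\alpha(y)$ manifestly continuous; bypassing it would require proving a.s.\ continuity of $Z_{\alpha,\beta}$ directly, by a conditional Kolmogorov continuity criterion based on the regularity of $C$ and the Hölder regularity of $W^\leftarrow_\alpha$, and then applying the same monotone upgrade.
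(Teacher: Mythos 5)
Your argument is correct in substance, but it takes a genuinely different route from the paper, and the difference is instructive. The paper does \emph{not} reduce to $q=1$: it treats the case $q<1$ head-on by (a) upgrading the hypothesis that $(u,w)\mapsto\me[X(u)X(w)]$ is regularly varying to the \emph{uniform-in-strips} regular variation of $f$ required by Proposition \ref{Prop:mu=infty} (using monotonicity of $\me[X(ut)X(wt)]$ in each variable, continuity of $C$, and the fact that monotone convergence to a continuous limit is locally uniform), (b) defining $Z_{\alpha,\beta}(0)=0$ through $\me[Z_{\alpha,\beta}(u)^2]\to0$, (c) proving that $Z_{\alpha,\beta}$ is continuous in probability by a conditional Chebyshev estimate plus Fatou, and (d) invoking Bingham's theorem for nondecreasing processes, which needs only stochastic continuity of the limit. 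You instead show that under the corollary's own hypotheses the case $q<1$ cannot occur: your moment argument — truncating at the Lindeberg threshold of \eqref{eq:stable Lindeberg X-h} and using $\me|X(t)-h(t)|\leq 2h(t)$ for the nonnegative $X$ — gives $\sqrt{v(t)\mmp\{\xi>t\}}\leq 2y\,h(t)(1+o(1))$ for every $y>0$, hence $v(t)\mmp\{\xi>t\}=o(h(t)^2)$ and $q=1$; this is a valid and self-contained observation (not made in the paper) which renders the $q<1$ clause vacuous, and it is in fact forced by consistency, since a $J_1$-limit of nondecreasing processes must be nondecreasing while a limit containing a nondegenerate $\sqrt{1-q}\,Z_{\alpha,\beta}$ component is not, exactly as your portmanteau argument shows. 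After that, your $q=1$ case (fdd convergence from Theorem \ref{Thm:mu=infty}, $\rho\geq0$, a.s.\ continuity and monotonicity of $\int_{[0,u]}(u-y)^\rho\,{\rm d}W^\leftarrow_\alpha(y)$, and the standard monotone-plus-continuous-limit upgrade to $J_1$) is the same in spirit as the paper's appeal to Bingham's theorem, only more elementary because your limit is a.s.\ continuous. Two caveats: your first (portmanteau) route to $q=1$ quietly applies Theorem \ref{Thm:mu=infty} under plain regular variation of $\me[X(u)X(w)]$, whereas Proposition \ref{Prop:mu=infty} formally requires uniform regular variation in strips — the paper's step (a) is exactly what fills this, so rely on your moment argument, which needs nothing of the sort; and your modulus-of-continuity bound $(u'-y)^\rho-(u-y)^\rho\leq (u')^\rho-u^\rho$ is reversed when $\rho\in(0,1)$ (use the concavity bound $(u'-u)^\rho$ there), a harmless slip.
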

We close the section with a negative result which implies that
weak convergence of the finite-dimensional distributions in
Theorem \ref{Thm:mu=infty} cannot be strengthened to weak
convergence on $D(0,\infty)$ whenever $Z_{\alpha,\alpha}$ arises
in the limit.

\begin{assertion}   \label{main3}
Any version of the process $Z_{\alpha,\alpha}$ has paths in the
Skorokhod space $D(0,\infty)$ with probability strictly less than
$1$. If further $C(u,w)=0$ for all $u\neq w$, $u,w>0$, then any
version has paths in $D(0,\infty)$ with probability $0$.
\end{assertion}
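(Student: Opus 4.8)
The plan is to argue conditionally on the inverse $\alpha$-stable subordinator $W^\leftarrow_\alpha$, exploiting that, given $W^\leftarrow_\alpha$, the process $Z_{\alpha,\alpha}$ is a centered Gaussian process with the explicit covariance kernel of Definition~\ref{definition_process_z}. Recall that $W^\leftarrow_\alpha$ is a.s.\ continuous and nondecreasing and that, since $W_\alpha(0+)=0$ while $W_\alpha$ is unbounded, $0<W^\leftarrow_\alpha(t)<\infty$ a.s.\ for every $t>0$; consequently the conditional variance $\sigma_t^2:=\me[Z_{\alpha,\alpha}(t)^2\mid W^\leftarrow_\alpha]=\int_{[0,t]}(t-y)^\alpha\,{\rm d}W^\leftarrow_\alpha(y)$ satisfies $\sigma_t^2\ge (t/2)^\alpha W^\leftarrow_\alpha(t/2)>0$ a.s.\ for all $t>0$, so all conditional variances entering the argument are a.s.\ finite and strictly positive. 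Because the statement concerns \emph{arbitrary} versions, I would only ever bound the outer probability of the path event $\{Z_{\alpha,\alpha}\in D(0,\infty)\}$ by honest measurable events.

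The computation feeding everything is that, conditionally on $W^\leftarrow_\alpha$ and for $0<s<t$,
\[
\me\big[(Z_{\alpha,\alpha}(t)-Z_{\alpha,\alpha}(s))^2\mid W^\leftarrow_\alpha\big]=\int_{(s,t]}(t-y)^\alpha\,{\rm d}W^\leftarrow_\alpha(y)+\int_{[0,s]}\big[(t-y)^\alpha+(s-y)^\alpha-2C(s-y,t-y)\big]\,{\rm d}W^\leftarrow_\alpha(y),
\]
where $C$ is positive semi-definite (being a pointwise limit of covariance functions), so $C(a,a)=a^\alpha$, $|C(p,q)|\le(pq)^{\alpha/2}$, and the integrand of the second term lies in $[0,4t^\alpha]$. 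For the second (white-noise) assertion, $C$ vanishes off the diagonal, the right-hand side equals $\sigma_s^2+\sigma_t^2$, and, conditionally on $W^\leftarrow_\alpha$, all values $Z_{\alpha,\alpha}(t)$ are independent. Fixing a rational $q>0$ and $t_n=q+1/n$, the increments over the disjoint pairs $Z_{\alpha,\alpha}(t_{2k-1})-Z_{\alpha,\alpha}(t_{2k})$ are then conditionally independent with variances $\sigma_{t_{2k-1}}^2+\sigma_{t_{2k}}^2\to 2\sigma_q^2>0$, so the Borel--Cantelli lemma forces $|Z_{\alpha,\alpha}(t_{2k-1})-Z_{\alpha,\alpha}(t_{2k})|$ to exceed some fixed positive ($W^\leftarrow_\alpha$-measurable) number infinitely often, a.s.\ Hence $(Z_{\alpha,\alpha}(t_n))_n$ is a.s.\ not Cauchy, i.e.\ $Z_{\alpha,\alpha}$ has a.s.\ no right limit at $q$; since $\{Z_{\alpha,\alpha}\in D(0,\infty)\}$ is contained in the measurable null event that $(Z_{\alpha,\alpha}(t_n))_n$ converges, any version has paths in $D(0,\infty)$ with probability $0$.

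For the first assertion the same mechanism applies as soon as $C$ fails to extend continuously up to the diagonal: letting $s\uparrow t$ in the displayed identity, the first term vanishes by continuity of $W^\leftarrow_\alpha$, and bounded convergence (along subsequences if necessary) shows the second term tends to $2\int_{[0,t]}\big((t-y)^\alpha-\underline C(t-y)\big)\,{\rm d}W^\leftarrow_\alpha(y)$ with $\underline C(a):=\liminf_{h\downarrow0}C(a-h,a)\le a^\alpha$, a quantity that is a.s.\ strictly positive for every $t$ unless $\underline C(a)=a^\alpha$; since over disjoint shrinking intervals near a fixed rational the relevant conditional increments are asymptotically uncorrelated, a Borel--Cantelli/Kolmogorov argument as above again yields probability $0$, a fortiori $<1$. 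The genuinely delicate case is when $C$ \emph{does} glue continuously to the diagonal: then $Z_{\alpha,\alpha}$ is conditionally $L^2$-continuous, no failure of one-sided limits can be localized at a deterministic point, and instead one must show that with positive probability the conditional Gaussian process is unbounded on some compact subinterval --- equivalently, by the dichotomy for Gaussian processes (a bounded separable Gaussian process is a.s.\ uniformly continuous in its canonical metric, whereas a $D(0,\infty)$-path is continuous off a countable set), that it is not a.s.\ bounded there. The natural route is a Sudakov-type lower bound for $\me[\sup_s Z_{\alpha,\alpha}(s)\mid W^\leftarrow_\alpha]$ obtained from a lower estimate on the metric entropy of the canonical metric, whose roughness is inherited from the mass that ${\rm d}W^\leftarrow_\alpha$ carries, at every scale, near the dense but Lebesgue-null increase set of $W^\leftarrow_\alpha$. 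Making this entropy bound uniform over a positive-probability family of subordinator paths, and combining it with the reduction above, is the main obstacle; the rest is the elementary Gaussian and Borel--Cantelli bookkeeping already indicated.
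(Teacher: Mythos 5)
Your proposal is built on the wrong covariance kernel, and this is not a cosmetic slip. The process $Z_{\alpha,\alpha}$ in Proposition \ref{main3} is the case $\beta=-\alpha$ of Definition \ref{definition_process_z} (see the Remark following Proposition \ref{Prop:mu=infty} and the last example of Section \ref{sec:Applications}), so its conditional variance is $\me[Z_{\alpha,\alpha}(t)^2\mid W^\leftarrow_\alpha]=\int_{[0,t]}(t-y)^{-\alpha}\,{\rm d}W^\leftarrow_\alpha(y)$, with a kernel that blows up on the diagonal, not $\int_{[0,t]}(t-y)^{\alpha}\,{\rm d}W^\leftarrow_\alpha(y)$ as you write. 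With your bounded kernel the first assertion is not even true in the expected generality --- Corollary \ref{Cor:J_1 convergence} shows that for $\beta\geq 0$ the limit processes do live in $D$ --- so no argument based on it can succeed; the diagonal singularity of $(t-y)^{-\alpha}$ is precisely the engine of the result. (Incidentally, the set of increase of $W^\leftarrow_\alpha$, the closure of the range of $W_\alpha$, is closed and Lebesgue-null, hence nowhere dense, not dense as your heuristic suggests.)

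Even granting the corrected kernel, the first assertion is not proved in your proposal: in what you call the delicate case, where $C$ extends continuously to the diagonal (which includes the paper's own example $C(u,w)=(u\vee w)^{-\alpha}$), you only gesture at a Sudakov/metric-entropy minoration and explicitly leave its implementation as ``the main obstacle.'' The paper's proof needs no entropy estimate and no case distinction on $C$: conditionally on $W^\leftarrow_\alpha$, the variance at the random time $t=W_\alpha(s)$ equals $\int_0^s\big(W_\alpha(s)-W_\alpha(y)\big)^{-\alpha}\,{\rm d}y$, and Fristedt's lower-envelope theorem yields, a.s., some $s\in[a,b]$ with $W_\alpha(s)-W_\alpha(y)\leq r\,(s-y)^{1/\alpha}$ as $y\uparrow s$, so this integral diverges like $\int(s-y)^{-1}\,{\rm d}y$. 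Hence $\me\big[\sup_{t\in[W_\alpha(a),W_\alpha(b)]}Z_{\alpha,\alpha}(t)^2\mid W^\leftarrow_\alpha\big]=\infty$ a.s., and Adler's theorem (a.s. boundedness of a centered Gaussian process forces finiteness of this conditional moment), applied conditionally, shows the paths cannot lie in $D(0,\infty)$ almost surely. That is the missing idea, and it is unavailable from your starting point because with the kernel $(t-y)^{\alpha}$ all conditional variances are finite. Your treatment of the white-noise case (conditional independence at a fixed point plus Borel--Cantelli along $t_n\downarrow q$) is essentially sound and parallels the paper's Kolmogorov zero--one-law argument, but it does not repair the first assertion.
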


\section{Applications}\label{sec:Applications}
Unless the contrary is stated, the random variable $\eta$
appearing in this section may be arbitrarily dependent on $\xi$,
and $(\xi_k,\eta_k)$, $k \in \mn$ denote i.i.d.~copies of
$(\xi,\eta)$.

\begin{example}
Let $X(t) = \1_{\{\eta> t\}}$, $\sigma^2<\infty$ and suppose that
$\mmp\{\eta>t\} \sim t^\beta \ell(t)$ for some $\beta\in (-1,0]$.
Since $h(t) = \me [X(t)] = \mmp\{\eta>t\}$ and
$v(t)=\mmp\{\eta>t\}\mmp\{\eta\leq t\}$ we infer $\lim_{t \to
\infty} v(t)/h(t)^2=\infty$. Further
\begin{equation*}
\frac{f(ut,wt)}{v(t)}=\frac{\mmp\{\eta>(u\vee w)t\} \mmp\{\eta
\leq (u\wedge w)t\}}{\mmp\{\eta>t\}\mmp\{\eta\leq t\}} ~\to~
(u\vee w)^\beta,  \quad u,w>0,
\end{equation*}
and this convergence is locally uniform in $\mr_+^2$ as it is the
case for $\lim_{t \to \infty} \mmp\{\eta>(u\vee
w)t\}/\mmp\{\eta>t\} = (u\vee w)^\beta$ by Lemma \ref{reg_var}(a).
In particular, condition \eqref{0040} holds with $C(u,w) = (u \vee
w)^\beta$. Finally, condition \eqref{eq:Lindeberg X-h} holds
because $|\1_{\{\eta>t\}}-\mmp\{\eta>t\}| \leq 1$ a.s. Now we
conclude that, according to the case $p=0$ of Theorem
\ref{Thm:mu<infty},
\begin{equation*}
\frac{\sum_{k\geq 0}\1_{\{S_k \leq ut <
S_k+\eta_{k+1}\}}-\frac{1}{\mu}\int_0^{ut}\mmp\{\eta>y\}\dy}{\sqrt{\mu^{-1}t\mmp\{\eta>t\}}}
~\fdc~ V_\beta(u),
\end{equation*}
where $V_\beta$ is a centered Gaussian process with covariance
\begin{equation*}
\me [V_\beta(u)V_\beta(w)] =
(1+\beta)^{-1}(w^{1+\beta}-(w-u)^{1+\beta}), \quad   0 \leq u \leq
w.
\end{equation*}
Assuming that $\xi$ and $\eta$ are independent, a counterpart of this result
with a random centering (i.e.~a result that follows from Proposition \ref{Prop:mu<infty})
was obtained in Proposition 3.2 of \citep{Mikosch+Resnick:2006}.
\end{example}

\begin{example}
Let $X(t)=\1_{\{\eta\leq t\}}$. Since $h(t)=\mmp\{\eta \leq t\}$
and $v(t)=\mmp\{\eta\leq t\}\mmp\{\eta>t\}\sim \mmp\{\eta>t\}$, we
infer $\lim_{t \to \infty} th(t)^2/\int_0^t v(y) \dy = \infty$.
Further, if $\me \eta<\infty$, then $v$ is dRi on $[0,\infty)$
because it is nonnegative, bounded, a.e.~continuous and dominated
by the decreasing and integrable function $\mmp\{\eta>t\}$. If
$\me\eta=\infty$, i.e., $\lim_{t \to \infty} \int_0^t
v(y)\dy=\infty$, $v$ is equivalent to the monotone function
$u(t)=\mmp\{\eta>t\}$. If $\sigma^2<\infty$ then, according to the
case $p=1$ of Theorem \ref{Thm:mu<infty},
\begin{equation*}
\frac{\sum_{k\geq 0}\1_{\{S_k+\eta_{k+1}\leq
ut\}}-\frac{1}{\mu}\int_0^{ut}\mmp\{\eta\leq
y\}\dy}{\sqrt{\sigma^2\mu^{-3}t}} ~\fdc~ \mathcal{S}_2(u),
\end{equation*}
where $\mathcal{S}_2$ is a Brownian motion, because $h$ is
regularly varying at $\infty$ of index $\rho=0$. If
$\mmp\{\xi>t\}$ is regularly varying at $\infty$ of index
$-\alpha$, $\alpha\in (0,1)$, then, by Corollary \ref{Cor:J_1
convergence},
\begin{equation*}
\mmp\{\xi>t\}\sum_{k\geq 0}\1_{\{S_k+\eta_{k+1}\leq ut\}}
~\Rightarrow~ W^\leftarrow_\alpha(u)
\end{equation*}
in the $J_1$-topology on $D$.
\end{example}


\begin{example}
Let $X(t)=\eta g(t)$ with $\Var\,\eta<\infty$ and let
$g:\mr^+\to\mr$ be regularly varying at $\infty$ of index
$\beta/2$ for some $\beta>-1$. Then $h(t) = g(t) \me\eta$ and
$v(t)= g(t)^2 \, \Var \, \eta$. While $f(u,w)= g(u)g(w) \Var \eta$ is clearly regularly varying in $\mr_+^2$ of index
$\beta$ with limit function $C(u,w)=(uw)^{\beta/2}$, \eqref{0040}
holds by virtue of Lemma \ref{reg_var}(a). Further observe that
$\lim_{t \to \infty} \sqrt{tv(t)}/|g(t)|=\infty$ implies
\begin{align*}
\me \big[(X(t)-h(t))^2 & \1_{\{|X(t)-h(t)|>y\sqrt{tv(t)}\}}\big] \\
 & = g(t)^2 \, \me\big[(\eta-\me\eta)^2\1_{\{|\eta-\me\eta|>y\sqrt{tv(t)}/|g(t)|\}}\big] = o(v(t))
\end{align*}
and thereupon \eqref{eq:Lindeberg X-h}. Also, as a consequence of
$\lim_{t \to \infty} \sqrt{v(t)/\mmp\{\xi>t\}}/|g(t)| = \infty$, which holds
whatever the law of $\xi$ is, we have
\begin{align*}
\me \big[(X(t)-h(t))^2 & \1_{\{|X(t)-h(t)|>y\sqrt{v(t)/\mmp\{\xi>t\}}\}}\big]   \\
&= g(t)^2 \, \me \big[(\eta-\me\eta)^2\1_{\{|\eta-\me\eta|>y
\sqrt{v(t)/\mmp\{\xi>t\}}/|g(t)|\}}\big] = o(v(t))
\end{align*}
which means that condition \eqref{eq:stable Lindeberg X-h} holds.

If $\me\eta=0$ and $\mu\in (0,\infty)$, then, according to
Proposition \ref{Prop:mu<infty},
\begin{equation*}
\frac{\sum_{k \geq 0}\eta_{k+1}g(ut-S_k)\1_{\{S_k\leq ut\}}}{
\sqrt{\mu^{-1}t\me [\eta^2]}g(t)} ~\fdc~ V_{\beta}(u)
\end{equation*}
where $V_\beta$ is a centered Gaussian process with covariance
\begin{equation*}
\me [V_\beta(u) V_\beta(w)] = \int_0^u
(u-y)^{\beta/2}(w-y)^{\beta/2} \, \dy,    \quad   0 < u \leq w.
\end{equation*}
Furthermore, the limit process can be represented as a stochastic
integral
\begin{equation*}V_\beta(u)=\int_{[0,u]}(u-y)^{\beta/2} \, {\rm
d}\mathcal{S}_2(y), \quad   u>0.
\end{equation*}

Throughout the rest of this example we assume that $\eta$ is
independent of $\xi$.

If $\me\eta=0$ and $\mmp\{\xi>t\}$ is regularly varying at
$\infty$ of index $-\alpha$, $\alpha\in (0,1)$ and $\beta >
-\alpha$ then, according to Proposition \ref{Prop:mu=infty},
\begin{equation*}
\frac{\sqrt{\mmp\{\xi>t\}}}{g(t)} \sum_{k \geq 0}
\eta_{k+1}g(ut-S_k) \1_{\{S_k\leq ut\}} ~\fdc~ \sqrt{\me[\eta^2]}
Z_{\alpha,\beta}(u).
\end{equation*}
Furthermore, the limit process can be represented as a stochastic
integral
\begin{equation*}
Z_{\alpha,\beta}(u) = \int_{[0,u]}(u-y)^{\beta/2} \, {\rm
d}\mathcal{S}_2 (W^\leftarrow_\alpha(y)), \quad   u>0
\end{equation*}
where $\mathcal{S}_2$ is a Brownian motion independent of
$W^\leftarrow_\alpha$, which can be seen by calculating the
conditional covariance of the last integral.

If $\me\eta\neq 0$, $\sigma^2<\infty$ and $g$ is eventually
monotone, then, according to Theorem \ref{Thm:mu<infty},
\begin{align*}
\frac{\sum_{k\geq 0}\eta_{k+1}g(ut-S_k)\1_{\{S_k \leq ut\}} -
\mu^{-1} \me \eta \int_0^{ut}g(y) \dy}{\me \eta \sqrt{t}g(t)}
 \qquad  \qquad  \qquad  \qquad  \\
~\fdc~ \Big(\frac{\sigma^2}{\mu^3}\Big)^2
\int_{[0,u]}(u-y)^{\beta/2} \, {\rm d}\mathcal{S}_2(u) +
\Big(\frac{\Var\, \eta}{(\me\eta)^2\mu}\Big)^{\!1/2}V_\beta(u).
\end{align*}

If $\me\eta\neq 0$, $\mmp\{\xi>t\}$ is regularly varying at
$\infty$ of index $-\alpha$, $\alpha\in (0,1)$, and
$\beta>-2\alpha$, then, since $\lim_{t \to \infty} v(t)
\mmp\{\xi>t\}/h(t)^2=0$, an application of Theorem
\ref{Thm:mu=infty} with $q=1$ gives
\begin{equation*}
\frac{\mmp\{\xi>t\}}{g(t)} \sum_{k\geq 0} \eta_{k+1}g(ut-S_k)
\1_{\{S_k\leq ut\}} ~\fdc~ \me \eta
\int_{[0,u]}(u-y)^{\beta/2}{\rm d}W_\alpha^\leftarrow(y).
\end{equation*}
If further $\eta\geq 0$ a.s.~and $g$ is increasing (which implies
$\beta\geq 0$), then, according to Corollary \ref{Cor:J_1
convergence}, the limit relation takes place in the $J_1$-
topology on $D$.
\end{example}

\begin{example} \label{Exa:stationary OU}
Let $Z:=(Z(t))_{t\geq 0}$ be a stationary Ornstein-Uhlenbeck
process defined by
\begin{equation*}
Z(t) = e^{-t} \theta + \int_{[0,t]} e^{-(t-y)} \, {\rm
d}\mathcal{S}_2(y), \quad   t \geq 0
\end{equation*}
where $\theta$ is a normal random variable with mean zero and
variance $1/2$ independent of a Brownian motion $\mathcal{S}_2$.
$Z$ and $\xi$ may be arbitrarily dependent. Put
$X(t)=(t+1)^{\beta/2}Z(t)$ for $\beta\in (-1,0)$. Then
$\me[X(t)]=0$ and
$f(u,w)=\me[X(u)X(w)] = 2^{-1} (u+1)^{\beta/2}(w+1)^{\beta/2}e^{-|u-w|}$   
from which we conclude that $f$ is fictitious regularly varying in
$\mr_+^2$ of index $\beta$. By stationarity, for each $t>0$,
$Z(t)$ has the same law as $\theta$. Hence
\begin{equation*}
\me [X(t)^2\1_{\{|X(t)|>y\}}] = (t+1)^\beta
\me[\theta^2\1_{\{|\theta|>y(t+1)^{-\beta/2}\}}] = o(t^\beta),
\end{equation*}
i.e., condition \eqref{eq:Lindeberg X-h} holds. If $\mu<\infty$ an
application of Proposition \ref{Prop:mu<infty} yields
\begin{equation*}
\frac{\sum_{k \geq 0} X_{k+1}(ut-S_k) \1_{\{S_k\leq
ut\}}}{\sqrt{(2\mu)^{-1}t^{\beta+1}}} ~\fdc~ V_\beta(u),
\end{equation*}
the limiting process being a centered Gaussian process with
independent values (white noise).
\end{example}

\begin{example}
Let $X(t)=\mathcal{S}_2((t+1)^{-\alpha})$, $\mmp\{\xi>t\}\sim
t^{-\alpha}$ and assume that $X$ and $\xi$ are independent. Then
$f(u,w) = \me[X(u)X(w)]$ is uniformly regularly varying of index
$-\alpha$ in strips in $\mr_+^2$ with limit function
$C(u,w)=(u\vee w)^{-\alpha}$. \eqref{eq:stable Lindeberg X-h}
follows from
$$\me [X(t)^2\1_{\{|X(t)|>y\}}] = (t+1)^{-\alpha} \me [\mathcal{S}_2(1)^2\1_{\{|\mathcal{S}_2(1)|>y(t+1)^{\alpha/2}\}}] = o(t^{-\alpha})$$
for all $y>0$. Thus, Proposition \ref{Prop:mu=infty} (in which we
take $u(t)\equiv 1$) applies and yields $\sum_{k \geq 0}
X_{k+1}(ut-S_k) \1_{\{S_k\leq ut\}} ~\fdc~
Z_{\alpha,\alpha}(u)$.
\end{example}

\section{Proofs of main results}    \label{sec:Proofs of the main results}
\subsection{Proofs of Propositions \ref{Prop:mu<infty} and \ref{Prop:mu=infty}}\label{subsec:Proofs of Propositions}
For a $\sigma$-algebra $\mathcal{G}$ we shall write
$\me_{\mathcal{G}}[\cdot]$ for $\me[\cdot|\mathcal{G}]$. Recalling
that $\nu(t)=\inf\{k\in\mn_0: S_k>t\}$, $t \geq 0$, we define the
renewal function $U(t) := \me [\nu(t)] = \sum_{k\geq 0}\mmp\{S_k \leq t\}$, $t\geq 0$.

\begin{proof}[Proof of Proposition \ref{Prop:mu<infty}]
We only investigate the case where $C(u,w)>0$ for some $u,w>0$,
$u\neq w$. Modifications needed in the case where $C(u,w)=0$ for
all $u,w>0$, $u\neq w$ should be clear from the subsequent
presentation.

Note that relation \eqref{0040} ensures continuity of the function $u \mapsto C(u,u+w)$ on $(0,\infty)$ for each $w>0$
(an accurate proof of a similar fact is given in \cite[pp.~2--3]{Yakimiv:2005}).
From the Cauchy-Schwarz inequality, we deduce that
\begin{equation}    \label{eq:impo ineq1}
|f(u,w)| \leq 2^{-1}(v(u)+v(w)), \quad u, w\geq 0,
\end{equation}
and hence
\begin{equation}    \label{eq:impo ineq2}
C(u-y,w-y) \leq 2^{-1} ((u-y)^\beta+(w-y)^\beta).
\end{equation}
Consequently, as $\beta > -1$,
\begin{equation*}
\int_0^u C(u-y,w-y) \, \dy<\infty,
\quad   0 < u \leq w.
\end{equation*}
Since $(u,w) \mapsto C(u,w)$ is positive semidefinite, so is
$(u,w)\mapsto \int_0^u C(u-y,w-y) \dy$, $0 < u \leq w$. Hence the
process $V_\beta$ does exist.

Without loss of generality we can and do assume that $X$ is
centered, for it is the case for $X(t)-h(t)$. According to the
Cram\'er-Wold device (see Theorem 29.4 in \citep{Billingsley:2012})
it suffices to prove that
\begin{equation}    \label{eq:Cramer-Wold device}
\frac{\sum_{j=1}^m \alpha_j \sum_{k \geq
0}X_{k+1}(u_jt-S_k)\1_{\{S_k \leq u_jt\}}}{\sqrt{\mu^{-1}tv(t)}}
~\stackrel{\mathrm{d}}{\to}~ \sum_{j=1}^m \alpha_j V_\beta(u_j)
\end{equation}
for all $m\in\mn$, all $\alpha_1,\ldots, \alpha_m\in\mr$ and all
$0<u_1<\ldots<u_m<\infty$. Note that the random variable
$\sum_{j=1}^m \alpha_j V_\beta(u_j)$ has a normal law with mean
$0$ and variance
\begin{equation}\label{D_definition}
(1+\beta)^{-1}\sum_{j=1}^m \alpha_j^2 u_j^{1+\beta}
+2\sum_{1\leq i<j\leq m}\alpha_i\alpha_j\int_0^{u_i}C(u_i-y,u_j-y) \, \dy ~=:~ D(u_1,\ldots, u_m).
\end{equation}

Define the $\sigma$-algebras $\F_0:=\{\varnothing,\Omega\}$ and
$\F_k:=\sigma((X_1,\xi_1),\ldots,(X_k,\xi_k))$, $k\in\mn$ and observe that
\begin{equation*}
\me_{\F_k} \bigg[\sum_{j=1}^m \alpha_j \1_{\{S_k \leq u_jt\}}
X_{k+1}(u_jt-S_k)\bigg] = 0.
\end{equation*}
Thus, in order to prove \eqref{eq:Cramer-Wold device}, one may use
the martingale central limit theorem  (Corollary 3.1 in
\citep{Hall+Heyde:1980}), whence it suffices to verify
\begin{align}       \label{eq:mgale CLT1}
\sum_{k\geq 0} \me_{\F_k} [Z_{k+1,t}^2] ~\stackrel{\mmp}{\to}~
D(u_1,\ldots, u_m),
\end{align}
and
\begin{equation}    \label{eq:mgale CLT2}
\sum_{k\geq 0} \me_{\F_k}
\big[Z_{k+1,t}^2\1_{\{|Z_{k+1,t}|>y\}}\big]
~\stackrel{\mmp}{\to}~ 0
\end{equation}
for all $y>0$, where
\begin{equation*}
Z_{k+1,t} := \frac{\sum_{j=1}^m \alpha_j \1_{\{S_k\leq
u_jt\}}X_{k+1}(u_jt-S_k)}{\sqrt{\mu^{-1}tv(t)}}, \quad k\in\mn_0,\
t>0.
\end{equation*}

\noindent {\it Proof of \eqref{eq:mgale CLT2}}: In view of the
inequality
\begin{eqnarray}\label{for_ref}
(a_1+\ldots+a_m)^2\1_{\{|a_1+\ldots+a_m|>y\}}&\leq&
(|a_1|+\ldots+|a_m|)^2\1_{\{|a_1|+\ldots+|a_m|>y\}}\notag\\&\leq&
m^2 (|a_1| \vee\ldots\vee |a_m|)^2\1_{\{m(|a_1| \vee\ldots\vee
|a_m|)>y\}}\notag\\&\leq&
m^2\big(a_1^2\1_{\{|a_1|>y/m\}}+\ldots+a_m^2\1_{\{|a_m|>y/m\}}\big)
\end{eqnarray}
which holds for $a_1,\ldots,a_m\in\mr$, it is sufficient to show that
\begin{equation}\label{Proof_of_Prop21_conv_to_0}
\sum_{k\geq 0} \! \1_{\{S_k \leq t\}} \me_{\F_k} \!
\bigg[\frac{X_{k+1}(t-S_k)^2}{\mu^{-1}tv(t)}
\!\1_{\{|X_{k+1}(t-S_k)|>y\sqrt{\mu^{-1}tv(t)}\}}\bigg]
~\stackrel{\mmp}{\to}~ 0
\end{equation}
for all $y>0$. We can take $t$ instead of $u_jt$ here because $v$
is regularly varying and $y>0$ is arbitrary.

Without loss of generality we assume that the function $t \mapsto tv(t)$ is increasing,
for we could otherwise work with $(\beta+1)\int_0^t v(y) {\rm d}y$ (see Lemma \ref{reg_var}(c)).
By Markov's inequality and the aforementioned monotonicity relation
\eqref{Proof_of_Prop21_conv_to_0} follows if we can prove that
\begin{equation}    \label{eq:renewal asymptotics}
\lim_{t \to \infty} \frac{1}{tv(t)}\int_{[0,t]} v_y(t-x) \, {\rm d} U(x) = 0
\end{equation}
for all $y>0$, where the definition of $v_y$ is given in \eqref{eq:Lindeberg X-h}.
Recalling that $\mu<\infty$ and that $v$ is locally
bounded, measurable and regularly varying at infinity of index
$\beta\in (-1,\infty)$ an application of Lemma \ref{Lem:Sgibnev2}
with $r_1=0$ and $r_2=1$ yields
\begin{equation*}
\int_{[0,t]} v(t-x) \, {\rm d}U(x)  ~\sim~  {\rm const}\,tv(t).
\end{equation*}
Since, according to \eqref{eq:Lindeberg X-h}, $v_y(t)=o(v(t))$,
\eqref{eq:renewal asymptotics} follows from Lemma
\ref{equiv:Lemma}(b).

\noindent
{\it Proof of \eqref{eq:mgale CLT1}}:
It can be checked that
\begin{eqnarray*}
\sum_{k\geq 0} \me_{\F_k} [Z_{k+1,t}^2]
& = &
\frac{\sum_{j=1}^m\alpha_j^2 \sum_{k \geq 0}\1_{\{S_k\leq u_jt\}}v(u_jt-S_k)}{\mu^{-1} t v(t)}  \\
& & + \frac{2\sum_{1\leq i<j\leq m}\alpha_i\alpha_j \sum_{k \geq 0}\1_{\{S_k\leq u_it\}}f(u_it-S_k, u_jt-S_k)}{\mu^{-1}tv(t)}.
\end{eqnarray*}
We shall prove that
\begin{equation}    \label{eq:convergence at u_1}
\frac{\sum_{k\geq 0} \1_{\{S_k\leq u_it\}}v(u_it-S_k)}{\mu^{-1}tv(t)}
~=~ \frac{\int_{[0,u_i]} v((u_i-y)t) \, {\rm d}\nu(ty)}{\mu^{-1} t v(t)}
~\stackrel{\mmp}{\to}~ \frac{u_i^{1+\beta}}{1+\beta}
\end{equation}
and
\begin{eqnarray}    \label{eq:convergence at u_1,u_2}
\frac{\sum_{k\geq 0}\1_{\{S_k\leq
u_it\}}f(u_it-S_k,u_jt-S_k)}{\mu^{-1}tv(t)} & = &
\frac{\int_{[0,u_i]}f((u_i-y)t,(u_j-y)t) \, {\rm d}\nu(ty)}{\mu^{-1} t v(t)}    \notag  \\
& \stackrel{\mmp}{\to} &
\int_0^{u_i} C(u_i-y,u_j-y) \dy.
\end{eqnarray}
for all $1\leq i<j\leq m$.

Fix any $u_i<u_j$ and pick $\varepsilon \in (0,u_i)$.
By the functional strong law of large numbers (Theorem 4 in \citep{Glynn+Whitt:1988})
\begin{equation*}
\lim_{t \to \infty}
\sup_{y\in[0,u_i]}\Big|\frac{\nu(ty)}{\mu^{-1}t}-y\Big|=0	\quad	\text{a.s.}
\end{equation*}
Also,
\begin{equation*}
\lim_{t \to \infty} \frac{v((u_i-y)t)}{v(t)} = (u_i-y)^\beta
\end{equation*}
uniformly in $y \in [0,u_i-\varepsilon]$ by Lemma
\ref{reg_var}(a), and
\begin{equation*}
\lim_{t \to \infty} \frac{f((u_i-y)t,(u_j-y)t)}{v(t)} = C(u_i-y,u_j-y)
\end{equation*}
uniformly in $y\in [0,u_i-\varepsilon]$, by virtue of
\eqref{0040}. Two applications of Lemma \ref{Lem:continuous
mapping D}(a) (with $X_t(y)=\nu(ty)/(\mu^{-1}t)$) yield
\begin{equation*}
\int_{[0,u_i-\varepsilon]}\frac{v((u_i-y)t)}{v(t)} \, {\rm d}{\nu(ty)\over \mu^{-1}t}
~\overset{\mathbb{P}}{\to}~
\int_0^{u_i-\varepsilon}(u_i-y)^\beta \, \dy    ~=~
\frac{u_i^{1+\beta}-\varepsilon^{1+\beta}}{1+\beta}
\end{equation*}
and
\begin{equation*}
\int_{[0,u_i-\varepsilon]} \frac{f((u_i-y)t,(u_j-y)t)}{v(t)} \, {\rm d} \frac{\nu(ty)}{\mu^{-1} t}
~\overset{\mmp}{\to}~
\int_0^{u_i-\varepsilon}C(u_i-y, u_j-y) \, \dy.
\end{equation*}
Observe that since $\nu(y)$ is a.s.\ increasing, so is $X_t(y)$.

As $\varepsilon\downarrow 0$, the right-hand sides of the last two
equalities converge to $(1+\beta)^{-1}u_i^{1+\beta}$ and
$\int_0^{u_1} C(u_i-y,u_j-y) \dy$, respectively. Therefore, for
\eqref{eq:convergence at u_1} and \eqref{eq:convergence at
u_1,u_2} to hold it is sufficient (see Lemma \ref{Lem:Thm 4.2 of
Billingsley}) that
\begin{equation*}
\lim_{\varepsilon\downarrow 0} \limsup_{t\to\infty} \mmp\bigg\{
\frac{\int_{(u_i-\varepsilon, \, u_i]} v(t(u_i-y)) \, {\rm
d}\nu(ty)}{tv(t)} > \delta\bigg\} = 0
\end{equation*}
and
\begin{equation*}
\lim_{\varepsilon \downarrow 0} \limsup_{t\to\infty} \mmp\bigg\{
\frac{\big|\int_{(u_i-\varepsilon, \, u_i]}f(t(u_i-y),t(u_j-y)) \,
{\rm d}\nu(ty) \big|}{tv(t)}>\delta\bigg\}=0
\end{equation*}
for all $\delta>0$.
By Markov's inequality it thus suffices to check that
\begin{equation}    \label{eq:Markov's inequality at u_1}
\lim_{\varepsilon\downarrow 0} \limsup_{t\to\infty}
\frac{\int_{(u_i-\varepsilon, \, u_i]}v((u_i-y)t) \, {\rm d}U(ty)}{tv(t)} ~=~ 0
\end{equation}
and
\begin{equation}    \label{eq:Markov's inequality at u_1,u_2}
\lim_{\varepsilon\downarrow 0} \limsup_{t\to\infty}
\frac{\int_{(u_i-\varepsilon, \, u_i]} |f((u_i-y)t,(u_j-y)t)| \, {\rm d}U(ty)}{tv(t)} ~=~ 0,
\end{equation}
respectively. Changing the variable $s=u_it$ and recalling that
$v$ is regularly varying of index $\beta\in (-1,\infty)$ we apply
Lemma \ref{Lem:Sgibnev2} with $r_1=1-\varepsilon u_i^{-1}$ and
$r_2=1$ to infer
\begin{eqnarray*}
\int_{((u_i-\varepsilon)t, \, u_it]} v(u_it-y) \, {\rm d}U(y)
&=& \int_{((1-\varepsilon u_i^{-1})s,s]}v(s-y)\,{\rm d}U(y)\\
&\sim& \bigg(\frac{\varepsilon}{u_i}\bigg)^{1+\beta}\frac{sv(s)}{(1+\beta)\mu}
~\sim~ \frac{\varepsilon^{1+\beta} tv(t)}{(1+\beta)\mu}.
\end{eqnarray*}
Using \eqref{eq:impo ineq1} 
we further obtain
\begin{align*}
\int_{((u_i-\varepsilon)t, \, u_it]} & |f(u_it-y, u_jt-y)| \, {\rm d}U(y)   \\
& \leq~
\frac{1}{2} \int_{((u_i-\varepsilon)t, \, u_it]} v(u_it-y) \, {\rm d}U(y)+ {1\over 2}
\int_{((u_i-\varepsilon)t, \, u_it]} v(u_jt-y) \, {\rm d}U(y)    \\
& \sim~ \frac{1}{2\mu(1+\beta)}
\big(\varepsilon^{1+\beta}+(u_j-u_i+\varepsilon)^{1+\beta}-(u_j-u_i)^{1+\beta}\big)tv(t),
\end{align*}
where for the second integral we have changed the variable
$s=u_jt$, invoked Lemma \ref{Lem:Sgibnev2} with
$r_1=(u_i-\varepsilon)u_j^{-1}$ and $r_2=u_iu_j^{-1}$ and then got
back to the original variable $t$. These relations entail both,
\eqref{eq:Markov's inequality at u_1} and \eqref{eq:Markov's
inequality at u_1,u_2}. The proof of Proposition
\ref{Prop:mu<infty} is complete.
\end{proof}

In what follows, $\F$ denotes the $\sigma$-algebra generated by
$(S_n)_{n\in\mn_0}$.
\begin{proof}[Proof of Proposition \ref{Prop:mu=infty}]
As in the previous proof we can and do assume that $X$ is
centered. Put $r(t):=v(t)/\mmp\{\xi>t\}$. The process
$Z_{\alpha,\beta}$ is well-defined by Lemma \ref{integrals}. In
view of the Cram\'er-Wold device it suffices to check that
\begin{equation}\label{0009}
\frac{1}{\sqrt{r(t)}}\sum_{j=1}^m \gamma_j Y(u_jt) \ \dod \
\sum_{j=1}^m \gamma_j Z_{\alpha,\beta}(u_j)
\end{equation}
for all $\gamma_1,\ldots, \gamma_m\in\mr$. Since $C(y,y)=y^\beta$,
then, given $W^\leftarrow_\alpha$, the random variable
$\sum_{j=1}^m \gamma_j Z_{\alpha,\beta}(u_j)$ is centered normal
with variance
\begin{eqnarray}\label{xxx}
D_{\alpha,\beta}(u_1,\ldots, u_m)
&:=&	\sum_{j=1}^m \gamma_j^2 \int_{[0,u_j]}(u_j-y)^\beta \, {\rm d} W^\leftarrow_\alpha(y)\\
& &		+2\sum_{1\leq i<j\leq m}\gamma_i\gamma_j \int_{[0,u_i]}C(u_i-y,u_j-y) \, {\rm d}W^\leftarrow_\alpha(y).\notag
\end{eqnarray}
Equivalently,
\begin{equation*}
\me \bigg[\exp\bigg({\rm i} z \sum_{j=1}^m \gamma_j
Z_{\alpha,\beta}(u_j)\bigg)\bigg] = \me
\big[\exp(-D_{\alpha,\beta}(u_1,\ldots, u_m)z^2/2)\big], \ \
z\in\mr
\end{equation*}
where here and throughout the paper, ${\rm i}$ denotes the imaginary unit.
Hence, according to Lemma \ref{essential2}, \eqref{0009} is a consequence of
\begin{equation}\label{00100}
\sum_{k\geq 0} \me_\mf [Z_{k+1,t}^2] \ \dod \
D_{\alpha,\beta}(u_1,\ldots, u_m),
\end{equation}
where $Z_{k+1,t} := (r(t))^{-1/2}\sum_{j=1}^m \gamma_j
X_{k+1}(u_jt-S_k)\1_{\{S_k\leq u_jt\}}$, and
\begin{equation}\label{00080}
\sum_{k\geq 0} \me_\mf [Z_{k+1,t}^2\1_{\{|Z_{k+1,t}|>y\}}] \
\tp\ 0
\end{equation}
for all $y>0$. Since $r(t)$ is regularly varying at $\infty$ of
index $\beta+\alpha$ we have
\begin{eqnarray*}
&&\hspace{-3cm}\limsup_{t\to\infty} {1\over r(t)} \int_{(\rho z,z]}v(t(z-y)) \,
{\rm d}U(ty)\\
&&\leq \lim_{t \to \infty} {r(tz)\over r(t)}\limsup_{t\to\infty}
{1\over r(tz)} \int_{(\rho tz,tz]}v(tz-y) \, {\rm
d}U(y)\\
&&=z^{\beta+\alpha}\limsup_{t\to\infty} {1\over r(t)}
\int_{(\rho t,t]}v(t-y) \, {\rm d}U(y)
\end{eqnarray*}
for all $z>0$. Hence the relation
\begin{equation}\label{former_essential_1}
\lim_{\rho\uparrow 1} \limsup_{t\to\infty} {1\over r(t)}
\int_{(\rho z,z]}v(t(z-y)) \, {\rm d}U(ty) ~=~ 0
\end{equation}
for all $z>0$ is an immediate consequence of Lemma
\ref{Lem:convergence of 1st moment at t=1}(a). Using the
representation
\begin{align*}
\sum_{k\geq 0} \me_\mf [Z_{k+1,t}^2]
& = \frac{1}{r(t)}\int_{[0,u_m]}\bigg(\sum_{j=1}^m \gamma^2_j v((u_j-y)t)\1_{[0,u_j]}(y)		\\
& \hphantom{= \frac{1}{r(t)}\int_{[0,u_m]}\bigg(}
+2\sum_{1\leq i<j\leq m}\gamma_i\gamma_j
f((u_i-y)t,(u_j-y)t)\1_{[0,u_i]}(y)\bigg) \, {\rm d}\nu(ty)
\end{align*}
we further conclude that \eqref{00100} follows from Lemma \ref{principal}
with $\lambda_1=0$ (observe that conditions \eqref{additional} and
\eqref{ref3} are then not needed and \eqref{ref1} coincides with
\eqref{former_essential_1}). In view of \eqref{for_ref},
\eqref{00080} is a consequence of
\begin{equation}\label{inter1}
{1\over r(t)} \sum_{k\geq 0}\1_{\{S_k\leq t\}}
\me_{\F}\Big[(X_{k+1}(t-S_k))^2
\1_{\{|X_{k+1}(t-S_k)|>y\sqrt{r(t)}\}}\Big] ~\stackrel{\mmp}{\to}~
0
\end{equation}
for all $y>0$. To prove \eqref{inter1} we assume, without loss of
generality, that the function $r$ is increasing, for in the case
$\beta=-\alpha$ it is asymptotically equivalent to an increasing
function $u(t)$ by assumption, while in the case $\beta>-\alpha$
the existence of such a function is guaranteed by Lemma
\ref{reg_var}(b) because $r$ is then regularly varying of positive
index. Using this monotonicity and recalling that we are assuming
that $h\equiv 0$, whence $v_y(t)=\me
\big[(X(t))^2\1_{\{|X(t)|>y\sqrt{r(t)}\}}\big]$, we conclude that
it is sufficient to check that
\begin{align*}
\me \bigg[\sum_{k\geq 0} \1_{\{S_k\leq t\}}
\me_{\F}\Big[(X_{k+1}(t-S_k))^2\1_{\{|X_{k+1}(t-S_k)|>y\sqrt{r(t-S_k)}\}}\Big] \bigg]  \\
~=~ \int_{[0,t]}v_y(t-x) \, {\rm d}U(x) ~=~ o(r(t))
\end{align*}
for all $y>0$, by Markov's inequality. In view of \eqref{eq:stable
Lindeberg X-h} the latter is an immediate consequence of Lemma
\ref{Lem:convergence of 1st moment at t=1}(b) with
$\phi_1(t)=v_y(t)$, $\phi(t)=v(t)$, $q(t)=u(t)$ and
$\gamma=\beta$. The proof of Proposition \ref{Prop:mu=infty} is
complete.
\end{proof}

\subsection{Proofs of Theorems \ref{Thm:mu<infty} and \ref{Thm:mu=infty}}

For the proof of Theorem \ref{Thm:mu<infty} we need two auxiliary results, Lemma \ref{Lem:convergence to 0} and Lemma \ref{main4}.
Replacing the denominator in \eqref{eq:1st summand convergence} by a function which grows faster
leads to weak convergence of finite-dimensional distributions to zero.
However, this result holds without the regular variation assumptions of Proposition \ref{Prop:mu<infty}.

\begin{lemma}   \label{Lem:convergence to 0}
Assume that
\begin{itemize}
    \item
        $\mu=\me \xi < \infty$;
    \item
        either
        $$\lim_{t \to \infty}\int_0^t v(y)\dy=\infty	\quad	\text{and}	\quad	\lim_{t \to \infty}\frac{v(t)}{\int_0^t v(y)\dy}=0$$
        and there exists a monotone function $u$ such that $v(t)\sim u(t)$ as $t \to \infty$, or $v$ is directly Riemann integrable (dRi)
        on $[0,\infty)$.
\end{itemize}
Then
\begin{equation}    \label{eq:convergence to 0}
\frac{Y(ut)-\sum_{k\geq 0}h(ut-S_k)\1_{\{S_k\leq ut\}}}{s(t)} ~\fdc~ 0,  \quad   t \to \infty
\end{equation}
for any positive function $s(t)$ regularly varying at $\infty$
which satisfies 
$$
\lim_{t \to \infty} s(t)^2/\int_0^t v(y)\dy=\infty.
$$
\end{lemma}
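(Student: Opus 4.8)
The plan is to prove the claimed convergence of finite-dimensional distributions by an $L^2$ argument. Since the limit is the constant process $0$, it suffices to show that for each fixed $u>0$
\begin{equation*}
Z_t(u) ~:=~ \frac{Y(ut)-\sum_{k\geq 0}h(ut-S_k)\1_{\{S_k\leq ut\}}}{s(t)} ~\tp~ 0, \qquad t\to\infty ,
\end{equation*}
and by Chebyshev's inequality it is enough to check $\me[Z_t(u)^2]\to 0$. As in the proofs of Propositions \ref{Prop:mu<infty} and \ref{Prop:mu=infty}, I would first replace $X$ by $X-h$; this leaves the numerator of $Z_t(u)$ unchanged, makes the summands centred, and does not alter $v$, since $v(t)=\me[(X(t)-h(t))^2]$. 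With $\F_k:=\sigma((X_1,\xi_1),\ldots,(X_k,\xi_k))$, $\F_0:=\{\varnothing,\Omega\}$, the terms $D_{k,t}:=(X_{k+1}(ut-S_k)-h(ut-S_k))\1_{\{S_k\leq ut\}}$ form a martingale-difference sequence (independence of $X_{k+1}$ from $\F_k$ and $\F_k$-measurability of $S_k$ give $\me_{\F_k}[D_{k+1,t}]=0$), and $\me[D_{k,t}^2]=\me[\1_{\{S_k\leq ut\}}v(ut-S_k)]$. Orthogonality of martingale differences then yields
\begin{equation*}
\me[Z_t(u)^2] ~=~ \frac{1}{s(t)^2}\sum_{k\geq 0}\me[D_{k,t}^2] ~=~ \frac{1}{s(t)^2}\int_{[0,ut]} v(ut-x)\,{\rm d}U(x),
\end{equation*}
where $U(t)=\sum_{k\geq 0}\mmp\{S_k\leq t\}$; all interchanges are justified by Tonelli's theorem and the local boundedness of $h$ and $v$, which also guarantees finiteness of the right-hand side.

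The core step is the renewal estimate $\int_{[0,T]}v(T-x)\,{\rm d}U(x)=O\!\big(\int_0^T v(y)\,\dy\big)$ as $T\to\infty$. If $v$ is directly Riemann integrable, then $\int_0^\infty v(y)\,\dy<\infty$ and by the key renewal theorem the left-hand side converges to $\mu^{-1}\int_0^\infty v(y)\,\dy$, so the estimate (with both sides bounded) holds. In the other case $v$ is asymptotically equivalent to a monotone, eventually positive function $u$ with $\int_0^t v(y)\,\dy\to\infty$ and $v(t)=o\!\big(\int_0^t v(y)\,\dy\big)$; splitting the integral at $x=T-t_0$ (for a large fixed $t_0$), bounding $v(T-x)$ by a constant multiple of $u(T-x)$ on $[0,T-t_0]$ and by $\sup_{[0,t_0]}v<\infty$ on $(T-t_0,T]$, and using $\int_0^T u\asymp\int_0^T v$, I reduce matters to the same estimate for a monotone, locally bounded $v$. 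For such $v$ I would use the elementary fact $C_1:=\sup_{t\geq 0}(U(t+1)-U(t))<\infty$ (valid because $\mu<\infty$, by the strong Markov property at the first renewal epoch after $t$) and the block bound $\int_{[0,T]}v(T-x)\,{\rm d}U(x)\leq C_1\sum_j v(T-j)$ over unit intervals; monotonicity lets one compare $\sum_j v(T-j)$ with $\int_0^T v(y)\,\dy$, the only leftover being a boundary term that is $O(v(T))=o\!\big(\int_0^T v(y)\,\dy\big)$ when $v$ is increasing (here the hypothesis $v(t)=o(\int_0^t v)$ enters, absorbing the contribution of the atom $S_0=0$) and $O(1)=o\!\big(\int_0^T v(y)\,\dy\big)$ when $v$ is decreasing. (Alternatively, this estimate can be quoted from the renewal-theoretic lemmas already used in the proof of Proposition \ref{Prop:mu<infty}.)

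Combining the two displays with the renewal estimate (applied at $T=ut$) and writing $\gamma$ for the index of regular variation of $s$, one gets
\begin{equation*}
\me[Z_t(u)^2] ~\leq~ C\,\frac{\int_0^{ut}v(y)\,\dy}{s(ut)^2}\cdot\frac{s(ut)^2}{s(t)^2},
\end{equation*}
where the first factor tends to $0$ by the assumption $s(T)^2/\int_0^T v(y)\,\dy\to\infty$ and the second factor tends to $u^{2\gamma}\in(0,\infty)$; hence $\me[Z_t(u)^2]\to 0$, which finishes the proof. I expect the renewal estimate in the non–directly-Riemann-integrable case to be the main obstacle: $v$ is only (eventually) monotone rather than regularly varying, so the standard regularly varying renewal asymptotics do not apply verbatim, and it is precisely the growth restriction $v(t)=o\!\big(\int_0^t v(y)\,\dy\big)$ that makes the boundary term near argument $0$ of $v$ negligible.
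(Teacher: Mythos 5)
Your proposal is correct and follows essentially the paper's own route: Chebyshev's inequality reduces the claim to showing that the second moment, which equals $s(t)^{-2}\int_{[0,ut]}v(ut-y)\,{\rm d}U(y)$, tends to zero, and this is settled by the renewal estimate $\int_{[0,T]}v(T-y)\,{\rm d}U(y)=O\big(\int_0^T v(y)\,\dy\big)$ together with the regular variation of $s$. The only difference is minor: where you prove this estimate in the non-integrable case by an elementary blocking argument based on $\sup_{t\geq 0}(U(t+1)-U(t))<\infty$, the paper instead obtains the sharper asymptotics $\int_{[0,t]}v(t-y)\,{\rm d}U(y)\sim\mu^{-1}\int_0^t v(y)\,\dy$ by quoting Lemmas \ref{equiv:Lemma}(a) and \ref{Lem:Sgibnev} — precisely the alternative you mention at the end.
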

\begin{proof}
By Chebyshev's inequality and the Cram\'{e}r-Wold device, it suffices to prove that
\begin{equation*}
s(t)^{-2} \, \me \bigg[\bigg(Y(t)-\sum_{k\geq 0}h(t-S_k)\1_{\{S_k\leq t\}}\bigg)^{\!\!2} \bigg] ~\to~ 0.
\end{equation*}
The expectation above equals
$\int_{[0,t]}v(t-y){\rm d}U(y)$. If $v$ is dRi, the latter
integral is bounded (this is clear from the key renewal theorem
when the law of $\xi$ is nonlattice while in the lattice case, it
follows from Lemma 8.2 in \citep{Iksanov+Marynych+Vatutin:2013+}).
If $v$ is non-integrable and $u$ is a monotone function such that
$v(t)\sim u(t)$, Lemma \ref{equiv:Lemma}(a) with $r_1=0$ and
$r_2=1$ yields
$$
\int_{[0,t]}v(t-y) \, {\rm d}U(y) ~\sim~ \int_{[0,t]}u(t-y) \, {\rm d}U(y).
$$
Modifying $u$ if needed in the right vicinity of zero we can
assume that $u$ is monotone and locally integrable.
Since $u \sim v$, we have $\lim_{t \to \infty} (u(t)/\int_0^t u(y){\rm d}y)=0$ as the corresponding relation holds for $v$,
and an application of Lemma \ref{Lem:Sgibnev} applied to $\phi=u$ with
$r_1=0$ and $r_2=1$ gives
$$
\int_{[0,t]}u(t-y) \, {\rm d}U(y)~\sim~ \frac{1}{\mu}\int_0^t u(y) \, {\rm d}y
$$
and again using $u \sim v$ we obtain
\begin{equation*}
\int_0^t u(y) \, \dy ~\sim~ \int_0^t v(y) \, \dy ~=~ o(s(t)^2),
\end{equation*}
where the last equality follows from the assumption on $s$. The
proof of Lemma \ref{Lem:convergence to 0} is complete.
\end{proof}

\begin{lemma}   \label{main4}
Assume that $h$ is eventually monotone and eventually nonnegative
and that the law of $\xi$ belongs to the domain of attraction of
an $\alpha$-stable law, $\alpha\in (1,2]$ (i.e., relation
\eqref{eq:FLT for N(t)} holds). Then
\begin{equation*}
\frac{\sum_{k\geq 0} h(ut-S_k) \1_{\{S_k\leq ut\}} - \frac{1}{\mu} \int_0^{ut}h(y) \, \dy}{r(t)} ~\fdc~ 0,
\quad   t\to\infty
\end{equation*}
for any positive function $r(t)$ regularly varying at $\infty$ of
positive index satisfying 
$$
\lim_{t \to \infty}
\frac{r(t)}{c(t)h(t)}=\infty,
$$
where $c$ is the same as in
\eqref{eq:FLT for N(t)}.
\end{lemma}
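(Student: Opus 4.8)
The plan is to observe first that, the limit being deterministic, convergence of the finite-dimensional distributions to $0$ is equivalent to convergence to $0$ in probability of each coordinate; so it suffices to show, for each fixed $u>0$, that $r(t)^{-1}\big(\sum_{k\geq 0}h(ut-S_k)\1_{\{S_k\leq ut\}}-\mu^{-1}\int_0^{ut}h(y)\dy\big)\tp 0$, and I will treat $u=1$, the general case being entirely analogous since \eqref{eq:FLT for N(t)} is a functional statement. Write $A_t:=\sum_{k\geq 0}h(t-S_k)\1_{\{S_k\leq t\}}-\mu^{-1}\int_0^t h(y)\dy=\int_{[0,t]}h(t-y)\,{\rm d}N(y)$, where $N(s):=\nu(s)-\mu^{-1}s$ and $N(0)=1$. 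Replacing $h$ by a function that coincides with it beyond a fixed point $T$ and is monotone on $[0,\infty)$ — which alters $A_t$ by a quantity that is $O_P(1)$ (a renewal window on which $h$ is bounded, plus constants), hence negligible against $r(t)\to\infty$ — we may and do assume $h$ monotone and nonnegative on all of $[0,\infty)$. I shall use repeatedly the elementary fact that, since $r$ and $c$ are regularly varying (of indices $\gamma>0$ and $1/\alpha$, respectively; Lemma \ref{ct}) and $r(s)/(c(s)h(s))\to\infty$, one has $c(t)h(at)/r(t)\to 0$ for every fixed $a>0$, because $c(t)h(at)/r(t)=\big(c(at)h(at)/r(at)\big)\big(c(t)/c(at)\big)\big(r(at)/r(t)\big)\to 0\cdot a^{-1/\alpha}\cdot a^{\gamma}$.

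By \eqref{eq:FLT for N(t)} one has $\sup_{0\leq s\leq t}|N(s)|=O_P(c(t))$. Integration by parts gives $A_t=h(0)N(t)-\int_{(0,t]}N(y-)\,{\rm d}_y h(t-y)$, whence $|A_t|\leq h(0)|N(t)|+\big(\sup_{[0,t]}|N|\big)\,\mathrm{Var}_{[0,t]}(h)$. If $h$ is increasing on $[0,\infty)$ then $\mathrm{Var}_{[0,t]}(h)=h(t)-h(0)\leq h(t)$ and $h(0)\leq h(t)$, so $|A_t|=O_P(c(t)h(t))$ and $A_t/r(t)\to 0$ by hypothesis. If $h$ is decreasing on $[0,\infty)$ and $\int_0^\infty h(y)\dy<\infty$, then $h$ is directly Riemann integrable, $\mu^{-1}\int_0^t h\to\mu^{-1}\int_0^\infty h$, and $\int_{[0,t]}h(t-y)\,{\rm d}\nu(y)$ converges in distribution (convergence of the renewal shot noise to its stationary version, cf.~\citep{Iksanov+Marynych+Meiners:2015II}); hence $A_t=O_P(1)=o_P(r(t))$.

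There remains the main case: $h$ decreasing on $[0,\infty)$ with $\int_0^\infty h=\infty$. Here the previous bound, which only gives $O_P(c(t)h(0))$, is far too crude — the true order of $A_t$ is $c(t)h(t)$ — so one must exploit that the increment of $(\nu(s))$ over a window of length $s$ has order $c(s)$, not $c(t)$. With $\rho:=-{\rm d}h$, a finite measure of mass $\leq h(0)$ satisfying $\rho((M,\infty))\downarrow 0$ as $M\to\infty$, a further integration by parts rewrites
\begin{equation*}
A_t=h(t)N(t)+\int_{(0,t)}\big(\nu(t)-\nu(t-s)-\mu^{-1}s\big)\,\rho({\rm d}s).
\end{equation*}
The first term has $L^1$-norm at most $h(t)\,\me|N(t)|\leq C\,h(t)c(t)=o(r(t))$. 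For the integral I would pass to expectations and use the renewal estimates $\me\big|\nu(t)-\nu(t-s)-\mu^{-1}s\big|\leq C(1+c(s))$ for $0<s\leq t$ and $\me\sup_{0<s\leq M}\big|\nu(t)-\nu(t-s)-\mu^{-1}s\big|\leq C_M<\infty$, together with $\lim_{M\to\infty}\rho((M,\infty))=0$, reducing everything to the bound $\int_{(M,t]}c(s)\,\rho({\rm d}s)=o(r(t))$; this I would obtain by a Karamata-type computation, splitting the range of integration at a power $t^{1-\eta}$ with $\eta$ chosen in terms of $\gamma$ and $\alpha$, and invoking the scaling fact of the first paragraph (which transports $c(s)h(s)=o(r(s))$ to the scales that occur). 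Alternatively, the finite-dimensional limit theorems for the second summand established, for eventually decreasing $h$, in \citep{Iksanov+Marynych+Meiners:2014} may be invoked. This decreasing, non-integrable case is the crux of the argument: one must match the genuine $c(t)h(t)$-scale of the fluctuation of $A_t$ against an essentially arbitrary regularly varying $r$ that is only known to dominate $c(t)h(t)$, with no regular-variation hypothesis on $h$ itself available.
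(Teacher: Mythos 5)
Your overall architecture is sound and largely parallels the paper: reduce to per-coordinate convergence in probability, replace $h$ by a function $h^\ast$ that is monotone and nonnegative on all of $[0,\infty)$ (error $O_P(1)$ by distributional subadditivity of $\nu$), and treat the increasing and decreasing cases separately. Your increasing case is correct and in fact more elementary than the paper's: bounding the total variation of $y\mapsto h^\ast(t-y)$ by $h^\ast(t)$ and using $\sup_{0\leq s\leq t}|\nu(s)-\mu^{-1}s|=O_P(c(t))$ gives $O_P(c(t)h^\ast(t))=o_P(r(t))$ directly, whereas the paper routes this case through weak convergence of the measures $\rho_t$ to $\delta_0$ and Lemma \ref{Lem:continuous mapping D}(b). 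The decreasing integrable case ($O_P(1)$ via boundedness of $\int_{[0,t]}h^\ast(t-y)\,{\rm d}U(y)$) is fine, as are the identity $A_t=h^\ast(t)(\nu(t)-\mu^{-1}t)+\int_{(0,t]}\big(\nu(t)-\nu(t-s)-\mu^{-1}s\big)\,{\rm d}(-h^\ast(s))$ and the deterministic bound $\int_{(M,t]}c(s)\,{\rm d}(-h^\ast(s))=o(r(t))$ (integration by parts plus Karamata, using $h^\ast(s)\leq\varepsilon(s)r(s)/c(s)$ with $\varepsilon(s)\to 0$).

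The gap is exactly where you locate the crux. In the decreasing, non-integrable case everything rests on the asserted estimate $\me\big|\nu(t)-\nu(t-s)-\mu^{-1}s\big|\leq C(1+c(s))$ uniformly in $0<s\leq t$, which you neither prove nor cite, and which is not an off-the-shelf fact. For $\alpha\in(1,2)$ even the non-uniform statement $\me|\nu(s)-\mu^{-1}s|=O(c(s))$ is a moment-convergence result needing justification, and the uniform windowed version additionally requires control of the overshoot at $t-s$ and of $U(t)-U(t-s)-\mu^{-1}s$, which is \emph{not} uniformly bounded when $\me\xi^2=\infty$ (for $s\asymp t$ it is of order $t^{2-\alpha}\ell^\ast(t)$). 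A cruder bound by $c(t)$ cannot substitute, since $c(t)$ need not be $o(r(t))$ (take $\alpha=2$, $h(t)=t^{-1/2}$, $r(t)=t^{1/10}$): the smallness must come from the window scale $c(s)$ uniformly in the window's position, and supplying that uniform control is precisely the technical content the paper imports from Lemma 3.2 of \citep{Iksanov+Marynych+Meiners:2014} (an in-probability bound of the window fluctuations by a power $y^{1/\alpha-\delta}$) before finishing with a deterministic Karamata computation based on $h^\ast(t)\leq d^{-1}r(t)/c(t)$. Your proposed fallback of invoking the finite-dimensional limit theorems for the second summand in \citep{Iksanov+Marynych+Meiners:2014} does not close the gap either: those theorems assume $h$ regularly varying and use the normalization $c(t)h(t)$, whereas Lemma \ref{main4} assumes only eventual monotonicity of $h$; the paper mimics the proof of Theorem 2.7 there, it cannot cite its statement. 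As written, the decisive case is a plan rather than a proof.
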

\begin{proof}
Using the Cram\'er-Wold device and taking into account the regular variation of $r$,
it suffices to prove that
\begin{equation}    \label{eq:stronger norming}
\frac{\int_{[0,t]}h(t-y) \, {\rm d}(\nu(y)-\frac{y}{\mu})}{r(t)}
~=~	\frac{\sum_{k\geq 0}h(t-S_k)\1_{\{S_k\leq t\}} - \frac{1}{\mu} \int_0^t h(y) \, \dy}{r(t)}
~\stackrel{\mmp}{\to}~	0.
\end{equation}
By assumption, there exists a $t_0>0$ such that $h$ is monotone
and nonnegative on $[t_0,\infty)$. Let $h^\ast \in D$ be an
arbitrary function which coincides with $h$ on $[t_0,\infty)$.
Then, for $t>t_0$,
\begin{eqnarray*}
\bigg|\int_{[0,t]}\big(h(t-y)-h^\ast(t-y)\big) \, {\rm
d}\nu(y)\bigg| & = &
\bigg|\int_{(t-t_0,t]}\big(h(t-y)-h^\ast(t-y)\big) \, {\rm d}\nu(y)\bigg|  \\
& \leq &
\sup_{0 \leq y \leq t_0} \big|h(y)-h^\ast(y)\big| \, (\nu(t)-\nu(t-t_0))   \\
& \stackrel{\mathrm{d}}{\leq} & \sup_{0 \leq y \leq t_0}
\big|h(y)-h^\ast (y)\big| \, \nu(t_0),
\end{eqnarray*}
where $Z_1 \stackrel{\mathrm{d}}{\leq} Z_2$ means that
$\mmp\{Z_1>x\}\leq \mmp\{Z_2>x\}$ for all $x\in\mr$, and the last
inequality in the displayed formula follows from the
distributional subadditivity of $\nu$. Analogously,
\begin{equation*}
\bigg|\int_{[0,t]}\big(h(t-y)-h^\ast(t-y)\big) \, \dy \bigg|
~\leq~	\sup_{0 \leq y \leq t_0} \big|h(y)-h^\ast(y)\big| t_0.
\end{equation*}
Hence while proving \eqref{eq:stronger norming} we can replace $h$
with $h^\ast$. Choosing $t_0$ large enough we make $h^\ast$
monotone and nonnegative on $[0,\infty)$. Furthermore, if $h^\ast$
is increasing on $[t_0,\infty)$ we set $h^\ast(t)=0$ for $t\in
[0,t_0)$ thereby ensuring that $h^\ast(0)=0$.

\noindent{\em Case where $h^\ast$ is increasing}.
Integration by parts reveals that it is enough to prove
\begin{equation}    \label{eq:stronger norming int by parts}
{1\over r(t)} \int_{[0,1]}
\big(\nu(t)-\nu(t(1-y)-)-\mu^{-1}ty\big) \, {\rm
d}(-h^\ast(t(1-y))) ~\stackrel{\mmp}{\to}~ 0.
\end{equation}
By monotonicity, $h^\ast(t(1-y))/h^\ast(t)\leq 1$ for all $y\in
[0,1]$. Hence $\lim_{t \to \infty} {h^\ast(t(1-y))\over
r(t)/c(t)}=0$. For sufficiently large $t$, define finite measures
$\rho_t$ on $[0,1]$ by
\begin{equation*}
\rho_t([0,a]) = \frac{r(t)/c(t)-h^\ast(t(1-a))}{r(t)/c(t)}, \quad
a\in [0,1].
\end{equation*}
Then the $\rho_t$ converge weakly to $\delta_0$ as $t \to \infty$.
Applying the continuous mapping $\mathcal{V}:D\to D[0,1]$ with
$\mathcal{V}(f(\cdot))=f(1)-f((1-\cdot)-)$ to \eqref{eq:FLT for
N(t)} we obtain
$$
\frac{\nu(t)-\nu(t(1-y)-)-\mu^{-1}ty}{\mu^{-1-1/\alpha}c(t)}\quad
\Rightarrow\quad
\mathcal{S}_{\alpha}(1)-\mathcal{S}_{\alpha}((1-y)-)$$ in the
$J_1$- or $M_1$-topology on $D[0,1]$. Invoking Lemma
\ref{Lem:continuous mapping D}(b) yields \eqref{eq:stronger
norming int by parts}, since
$(\mathcal{S}_{\alpha}(1)-\mathcal{S}_{\alpha}((1-y)-))_{y\in[0,1]}$
is a.s. continuous at zero and
$\mathcal{S}_{\alpha}(1)-\mathcal{S}_{\alpha}(1-)=0$ a.s.

\noindent{\em Case where $h^\ast$ is decreasing}.
Integration by parts reveals that we have to prove
\begin{equation}    \label{inter2}
\frac{\nu(t)-\mu^{-1}t}{r(t)} h^\ast(t) \stackrel{\mmp}{\to} 0
\quad \text{and}  \quad {1\over r(t)} \int_{[0,t]}\!\!
\Big(\!\nu(t)-\nu((t-y)-)-\frac{y}{\mu}\Big) \, {\rm
d}(\!-h^\ast(y)) \stackrel{\mmp}{\to} 0.
\end{equation}
The first of these is a consequence of the assumption $\lim_{t \to \infty} r(t)/(c(t)h(t))=\infty$ and \eqref{eq:FLT for N(t)}.
Arguing as in the proof of Theorem 2.7 on pp.~2160-2161 in \citep{Iksanov+Marynych+Meiners:2014}
(note that $g(t)$ in \citep{Iksanov+Marynych+Meiners:2014} corresponds to
$\mu^{-1-1/\alpha}c(t)$ in this paper), we observe that the second relation in \eqref{inter2}
follows once we can check that
\begin{equation*}
\lim_{t \to \infty} \frac{\int_{[t_0,t]}y^{1/\alpha-\delta}{\rm d}(\!-h^\ast(y))}{t^{1/\alpha-\delta}r(t)/c(t)} ~=~ 0
\end{equation*}
for some $\delta\in (0,1/\alpha)$ and $t_0=t_0(\delta)>0$
specified in Lemma 3.2 of \citep{Iksanov+Marynych+Meiners:2014}
(recall that $\alpha=2$ corresponds to the case where the limit process in
\eqref{eq:FLT for N(t)} is a Brownian motion). 
Pick $\delta$ to further satisfy $\delta<\gamma$, where $\gamma$
is the index of regular variation of $r$. By Lemma \ref{ct} $c(t)$
is regularly varying at $\infty$ of index $1/\alpha$. Hence the
function $t\mapsto t^{1/\alpha-\delta}r(t)/c(t)$ is regularly
varying at $\infty$ of the positive index $\gamma-\delta$ which
particularly implies $\lim_{t \to \infty} t^{1/\alpha-\delta}r(t)/c(t)=\infty$.
Integration by parts yields
\begin{eqnarray*}
\frac{\int_{[t_0,t]}y^{1/\alpha-\delta}{\rm
d}\big(\!\!-\!h^\ast(y)\big)}{t^{1/\alpha-\delta}r(t)/c(t)} &=&
\frac{-t^{1/\alpha-\delta}h^\ast(t)}{t^{1/\alpha-\delta}r(t)/c(t)}\\
&+&\frac{t_0^{1/\alpha-\delta}h^\ast(t_0)}{t^{1/\alpha-\delta}r(t)/c(t)}
+\Big(\frac{1}{\alpha}-\delta\Big)\frac{\int_{t_0}^t
y^{1/\alpha-\delta-1}h^\ast(y)\dy}{t^{1/\alpha-\delta}r(t)/c(t)}.
\end{eqnarray*}
As $t\to\infty$, the first two terms converge to zero. As for the
third, observe that for any $d>0$ there exists $t(d)$ such that
$h^\ast(t) \leq d^{-1}r(t)/c(t)=d^{-1}t^{\gamma-1/\alpha}\ell(t)$
for all $t \geq t(d)$. With this at hand, we infer
\begin{eqnarray*}
\frac{\int_{t_0}^t
y^{1/\alpha-\delta-1}h^\ast(y)\dy}{t^{1/\alpha-\delta}r(t)/c(t)} &
= &
\frac{\int_{t_0}^{t(d)}y^{1/\alpha-\delta-1}h^\ast(y)\dy}{t^{1/\alpha-\delta}r(t)/c(t)}
+ \frac{\int_{t(d)}^t y^{1/\alpha-\delta-1}h^\ast(y)\dy}{t^{1/\alpha-\delta}r(t)/c(t)}   \\
& \leq & o(1)+ d^{-1}\int_{t(d)}^t y^{\gamma-\delta-1}\ell(y)\dy /
t^{\gamma-\delta}\ell(t) ~\to~ d^{-1}(\gamma-\delta +1)^{-1}
\end{eqnarray*}
by Lemma \ref{reg_var}(c). Letting $d \to \infty$ completes the
proof of Lemma \ref{main4}.
\end{proof}

\begin{proof}[Proof of Theorem \ref{Thm:mu<infty}]


\noindent{\it Case} $p=0$: According to Proposition \ref{Prop:mu<infty},
\eqref{eq:1st summand convergence} holds which is equivalent to
\begin{equation}    \label{eq:LT for martingale term}
{Y(ut)-\sum_{k\geq 0}h(ut-S_k)\1_{\{S_k\leq ut\}}\over
\sqrt{\int_0^t v(y){\rm d}y}}\quad
\overset{\mathrm{f.d.}}{\Rightarrow}\quad \sqrt{{1+\beta \over
\mu}}V_\beta(u)
\end{equation}
because $v$ is regularly varying at $\infty$ of index $\beta\in (-1,\infty)$.

Since $\big(\int_0^t v(y){\rm d}y\big)^{1/2}$ is regularly varying at
$\infty$ of positive index $\frac12(1+\beta)$ and
$$	\lit \frac{\sqrt{\int_0^t v(y){\rm d}y}}{c(t)|h(t)|}=+\infty,	$$
Lemma \ref{main4}\footnote{Lemma \ref{main4} requires that $h$ be
eventually monotone and eventually nonnegative. If $h$ is
eventually nonpositive we simply replace it with $-h$.} (with
$r(t)=\sqrt{\int_0^t v(y){\rm d}y}$) applies and yields
\begin{equation*}
{\sum_{k\geq 0}h(ut-S_k)\1_{\{S_k\leq
ut\}}-\mu^{-1}\int_0^{ut}h(y)\dy\over \sqrt{\int_0^t v(y){\rm
d}y}} ~\overset{\mathrm{f.d.}}{\Rightarrow}~  0.
\end{equation*}
Summing the last relation and \eqref{eq:LT for martingale term}
finishes the proof for this case because
$$\int_0^t v(y){\rm d}y	~\sim~	\int_0^t v(y){\rm d}y+c(t)^2h(t)^2.	$$

\noindent {\it Case} $p>0$:
Using Theorem 1.1 in \citep{Iksanov:2013}
when $h(t)$ is eventually nondecreasing
and Theorem 2.7 in \citep{Iksanov+Marynych+Meiners:2014}
when $h(t)$ is eventually nonincreasing
we infer
\begin{equation}    \label{eq:LT for SN with deterministic response}
\frac{\sum_{k\geq 0}h(ut-S_k)\1_{\{S_k\leq
ut\}}-\mu^{-1}\int_0^{ut}h(y) \dy}{c(t)h(t)}
~\overset{\mathrm{f.d.}}{\Rightarrow}~
\mu^{-(\alpha+1)/\alpha}\int_{[0,u]}(u-y)^\rho \, {\rm d}
\mathcal{S}_\alpha(y).
\end{equation}

\noindent{\it Subcase} $p=1$: By Lemma \ref{ct} $c(t)$ is
regularly varying at $\infty$ of index $1/\alpha$. Hence
$c(t)h(t)$ is regularly varying of positive index. If $v$ is dRi,
an application of Lemma \ref{Lem:convergence to 0} (with
$s(t)=c(t)h(t)$) yields
\begin{equation}\label{eq:Y(ut) centered convergence to 0}
\frac{Y(ut)-\sum_{k\geq 0}h(ut-S_k)\1_{\{S_k\leq ut\}}}{c(t)h(t)}
~\overset{\mathrm{f.d.}}{\Rightarrow}~ 0.
\end{equation}
If $\lim_{t\to \infty}\int_0^t v(y){\rm d}y=\infty$, then the assumption $\lim_{t \to \infty}
(c(t)^2h(t)^2/\int_0^t v(y) \dy) = \infty$ implies that $\lim_{t \to \infty}
(v(t)/\int_0^t v(y)\dy) = 0$.
{
To see this, we can assume without loss of generality that $v$ is monotone.
If $v$ is decreasing, then the claimed convergence follows immediately.
Hence, consider the case where $v$ is increasing.
Since $c(t)^2h(t)^2$ is regularly varying and $\int_0^t v(y)dy \geq v(t/2)t/2$,
we conclude that there exists an $a>0$ such that $\lim_{t \to \infty} t^a/v(t)=\infty$.
Let $a_*$ denote the infimum of these $a$.
Then, there exists $\varepsilon > 0$ such that
$t^{a_*+\varepsilon}/v(t)\to\infty$ whereas
$t^{a_*+\varepsilon-1}/v(t)\to 0$.
Consequently,
\begin{equation*}
\frac{v(t)}{\int_0^t v(y)dy}
~\leq~	\frac{v(t)}{\int_{t/2}^t v(y)dy}
~\leq~	\frac{2v(t)}{tv(t/2)}
~=~	2^{a_*+\varepsilon} \frac{v(t)}{t^{a_*+\varepsilon}} \frac{(t/2)^{a_*+\varepsilon-1}}{v(t/2)}
~\to~	0
\end{equation*}
because both factors tend to
zero by our choice of gamma (and varepsilion).
}
Invoking Lemma \ref{Lem:convergence
to 0} again allows us to conclude that \eqref{eq:Y(ut) centered
convergence to 0} holds in this case, too. Summing \eqref{eq:LT
for SN with deterministic response} and \eqref{eq:Y(ut) centered
convergence to 0} finishes the proof for this subcase because
$$	c(t)^2h(t)^2	~\sim~	\int_0^t v(y){\rm d}y+c(t)^2h(t)^2.	$$

\noindent{\it Subcase} $p\in(0,1)$: We only give a proof in the
case $\sigma^2<\infty$, the other cases being similar. Relation
\eqref{eq:convergence A3} then reads
\begin{equation}\label{eq:convergence_A3_eq}
{Y(ut)-\mu^{-1}\int_0^{ut}h(y)\dy\over \sigma \sqrt{t}h(t)} ~\fdc~
c_1 V_\beta(u)+ c_2 \int_0^u(u-y)^\rho \, {\rm d}
\mathcal{S}_\alpha(y),
\end{equation}
where $c_1:=\sqrt{\frac{(1-p)(1+\beta)}{p\mu}}$ and
$c_2:=\mu^{-(\alpha+1)/\alpha}$. Write
\begin{eqnarray*}
\frac{Y(ut)-\mu^{-1}\int_0^{ut}h(y)\dy}{\sigma \sqrt{t}h(t)} & = &
\frac{Y(ut)-\sum_{k\geq 0}h(ut-S_k)\1_{\{S_k\leq ut\}}}{\sigma \sqrt{t}h(t)}   \\
& &
+ \frac{\sum_{k\geq 0}h(ut-S_k)\1_{\{S_k \leq ut\}}-\mu^{-1}\int_0^{ut}h(y)\dy}{\sigma \sqrt{t}h(t)}   \\
&=:& A_t(u) + B_t(u).
\end{eqnarray*}
According to Proposition \ref{Prop:mu<infty}, \eqref{eq:LT for
martingale term} holds which is equivalent to $$A_t(u)\quad
\overset{\mathrm{f.d.}}{\Rightarrow}\quad  c_1 V_\beta(u).$$ From
\eqref{eq:LT for SN with deterministic response} we already know
that
\begin{equation}\label{eq:LT for
B_t(u)} B_t(u)\quad \overset{\mathrm{f.d.}}{\Rightarrow}\quad
c_2\int_{[0,u]}(u-y)^\rho \, {\rm d} \mathcal{S}_2(y).
\end{equation}
By the Cram\'er-Wold device and L\'evy's continuity theorem,	
in order to prove \eqref{eq:convergence_A3_eq} it suffices to check
that, for any $m\in\mn$, any real numbers $\alpha_1,\ldots,
\alpha_m$, $\beta_1,\ldots, \beta_m$, any $0 < u_1 <\ldots,
u_m<\infty$ and any $w,z \in\mr$,
\begin{align}
\lim_{t \to \infty} & \me \bigg[\exp\bigg({\rm i}w\sum_{j=1}^m
\alpha_j A_t(u_j)+{\rm i}z  \sum_{r=1}^m \beta_rB_t(u_r)\bigg)\bigg]  \notag  \\
& =~    \me \Big[ \exp\bigg({\rm i}wc_1\sum_{j=1}^m \alpha_j
V_\beta(u_j)\bigg)\Big] \me \bigg[\exp\bigg({\rm
i}zc_2\sum_{r=1}^m
\beta_r \int_{[0,u_r]}(u_r-y)^\rho \, {\rm d}\mathcal{S}_2(y)\bigg)\bigg]   \notag  \\
& = \exp\big(-D(u_1,\ldots, u_m)c_1^2w^2/2\big)  \me
\bigg[\exp\bigg({\rm i}zc_2\sum_{r=1}^m
\beta_r\int_{[0,u_r]}(u_r-y)^\rho \, {\rm
d}\mathcal{S}_2(y)\bigg)\bigg] \label{eq:suffices A13}
\end{align}
with $D(u_1,\ldots,u_m)$ defined in \eqref{D_definition}.

The idea behind the subsequent proof is that while the $B_t$ is
$\F$-measurable, the finite-dimensional distributions of the $A_t$
converge weakly conditionally on $\F$. To make this precise, we
write
\begin{align*}
\me_\F & \bigg[\exp\bigg({\rm i}w\sum_{j=1}^m \alpha_jA_t(u_j) + {\rm i}z\sum_{r=1}^m \beta_rB_t(u_r)\bigg) \bigg]   \\
&=~ \exp\bigg({\rm i}z\sum_{r=1}^m \beta_rB_t(u_r)\bigg) \, \me_\F
\bigg[\exp\bigg({\rm i}w\sum_{j=1}^m \alpha_jA_t(u_j)\bigg)\bigg].
\end{align*}
In view of \eqref{eq:LT for B_t(u)}
\begin{align*}
\exp\bigg(&{\rm i}z\sum_{r=1}^m \beta_r B_t(u_r)\bigg)
~\overset{\mathrm{d}}{\to} \ \exp\bigg({\rm i}zc_2\sum_{r=1}^m
\beta_r\int_{[0,u_r]}(u_r-y)^\rho \, {\rm
d}\mathcal{S}_2(y)\bigg).
\end{align*}
Since $X$ and $\xi$ are assumed independent, relations
\eqref{eq:mgale CLT1} and \eqref{eq:mgale CLT2} read
\begin{align*}       \label{eq:mgale CLT11}
\sum_{k\geq 0} \me_\F [Z_{k+1,t}^2] ~\stackrel{\mmp}{\to}~
D(u_1,\ldots,u_m)
\end{align*}
and
\begin{equation*}    \label{eq:mgale CLT21}
\sum_{k\geq 0} \me_\F
\big[Z_{k+1,t}^2\1_{\{|Z_{k+1,t}|>y\}}\big]
~\stackrel{\mmp}{\to}~ 0
\end{equation*}
for all $y>0$, respectively. With these at hand and noting that
$$y(t):={\sqrt{\mu^{-1}tv(t)}\over \sigma\sqrt{t}h(t)}\quad\to\quad c_1,$$ we infer
\begin{eqnarray*}
\me_\F \bigg[ \exp\bigg({\rm i}w\sum_{j=1}^m \alpha_jA_t(u_j)
\bigg)\bigg]&=&\me_\F \bigg[ \exp\bigg({\rm i}w y(t)\sum_{k\geq 0}
Z_{k+1,t}\bigg)\bigg]\\
 &\overset{\mathrm{d}}{\to}&
\exp(-D(u_1,\ldots,u_m)c_1^2 w^2/2)
\end{eqnarray*}
by formula \eqref{conditional} of Lemma \ref{essential2}. Since
the right-hand side of the last expression is non-random,
Slutsky's lemma implies
\begin{align*}
\exp&\bigg({\rm i}z\sum_{r=1}^m \beta_r B_t(u_r)\bigg)
\, \me_\F \bigg[\exp\bigg({\rm i}w\sum_{j=1}^m \alpha_j A_t(u_j)\bigg) \bigg]  \\
&\overset{\mathrm{d}}{\to} \exp \bigg({\rm i}zc_2 \sum_{r=1}^m \beta_r
\int_{[0,u_r]} \! (u_r-y)^\rho \, {\rm d}\mathcal{S}_2(y) \bigg)\exp(-D(u_1,\ldots, u_m)c_1^2 w^2/2).
\end{align*}
Invoking the Lebesgue dominated convergence theorem completes the proof of \eqref{eq:suffices A13}.
\end{proof}

\begin{proof}[Proof of Theorem \ref{Thm:mu=infty}]
\noindent {\it Case} $q=0$:
According to Proposition \ref{Prop:mu=infty}
\begin{equation}    \label{eq:limit relation when mu=infty}
\sqrt{\frac{\mmp\{\xi>t\}}{v(t)}}\bigg(Y(ut)-\sum_{k\geq
0}h(ut-S_k)\1_{\{S_k\leq ut\}}\bigg) ~\fdc~ Z_{\alpha,\beta}(u).
\end{equation}
It remains to show that
\begin{equation*}
\sqrt{\frac{\mmp\{\xi>t\}}{v(t)}} \sum_{k\geq
0}h(ut-S_k)\1_{\{S_k\leq ut\}} ~\fdc~0.
\end{equation*}
Invoking the Cram\'{e}r-Wold device, Markov's inequality and the
regular variation of the normalization factor, we conclude that it
is enough to prove that
\begin{equation}    \label{eq:remainder to 0 when mu=infty}
\sqrt{\frac{\mmp\{\xi>t\}}{v(t)}} \me \bigg[\sum_{k\geq
0}|h(t-S_k)|\1_{\{S_k\leq t\}} \bigg] ~=~
\sqrt{\frac{\mmp\{\xi>t\}}{v(t)}} \int_{[0,t]}|h(t-x)| \, {\rm
d}U(x) ~\to~    0.
\end{equation}
This follows immediately from Lemma \ref{Lem:convergence of 1st
moment at t=1}(b) with $\phi_1(t)=|h(t)|$,
$\phi(t)=\sqrt{v(t)\mmp\{\xi>t\}}$, $\gamma=(\beta-\alpha)/2$ and
$q(t)=\sqrt{u(t)}$ for $u(t)$ defined in Proposition
\ref{Prop:mu=infty}. Note that $\phi_1=o(\phi)$ in view of the
assumption $q=0$. The proof for this case is complete because
$${\mmp\{\xi>t\}\over \sqrt{v(t)\mmp\{\xi>t\}+h(t)^2}} \ \sim \sqrt{\mmp\{\xi>t\}\over v(t)}.$$

\noindent {\it Case} $q=1$: Using Theorem 1.1 in
\citep{Iksanov:2013} when $\rho>0$\footnote{ In Theorem 1.1 of
\citep{Iksanov:2013} functional limit theorems were proved under
the assumption that $h$ is eventually nondecreasing. The latter
assumption is not needed for weak convergence of
finite-dimensional distributions which can be seen by mimicking
the proof of Theorem 2.9 in \citep{Iksanov+Marynych+Meiners:2014}.}
and Theorem 2.9 in \citep{Iksanov+Marynych+Meiners:2014} when $\rho
\in [-\alpha,0]$, we infer
\begin{equation*}
\frac{\mmp\{\xi>t\}}{h(t)} \sum_{k\geq 0}h(ut-S_k)\1_{\{S_k\leq
ut\}} ~\fdc~ \int_{[0,u]}(u-y)^\rho {\rm d}
W^\leftarrow_\alpha(y).
\end{equation*}
It remains to show that
\begin{equation*}
\frac{\mmp\{\xi>t\}}{h(t)} \bigg(Y(ut)-\sum_{k\geq 0}h(ut-S_k) \1_{\{S_k\leq ut\}}\bigg) ~\fdc~0.
\end{equation*}
Appealing to Markov's inequality and the Cram\'{e}r-Wold device we conclude that it suffices to prove
\begin{align*}
\bigg(\frac{\mmp\{\xi>t\}}{h(t)}\bigg)^{\!\!2} \, \me \bigg[\bigg(Y(ut)-\sum_{k\geq 0}h(ut-S_k) \1_{\{S_k\leq ut\}}\bigg)^{\!\!2}\bigg]      \\
~=~ \bigg(\frac{\mmp\{\xi>t\}}{h(t)}\bigg)^{\!\!2} \, \int_{[0,t]}v(t-y)
\, {\rm d}U(y) ~\to~   0.
\end{align*}
This immediately follows from Lemma \ref{Lem:convergence of 1st
moment at t=1}(b) with $\phi_1(t)=v(t)$,
$\phi(t)=h(t)^2/\mmp\{\xi>t\}$, $\gamma=2\rho+\alpha$ and
$q(t)=w(t)^2$. Note that $\phi_1=o(\phi)$ in view of the
assumption $q=1$. The proof for this case is complete because
(trivially)
$${\mmp\{\xi>t\}\over \sqrt{v(t)\mmp\{\xi>t\}+h(t)^2}} \ \sim {\mmp\{\xi>t\}\over h(t)}.$$

\noindent {\it Case} $q\in(0,1)$:
Put
\begin{eqnarray*}
\bar{A}_t(u)  & := &  \sqrt{\frac{\mmp\{\xi>t\}}{v(t)}} \sum_{k\geq 0} \big(X_{k+1}(ut-S_k)-h(ut-S_k)\big)\1_{\{S_k\leq ut\}},   \\
\bar{B}_t(u)  & := &  \sqrt{{\mmp\{\xi>t\}\over v(t)}} \sum_{k\geq
0} h(ut-S_k)\1_{\{S_k\leq ut\}}
\end{eqnarray*}
and
\begin{equation*}
A_{\alpha,\beta}(u) ~:=~ q^{1/2}(1-q)^{-1/2}
\int_{[0,u]}(u-y)^{(\beta-\alpha)/2} \, {\rm d}
W^\leftarrow_\alpha(y).
\end{equation*}
We shall prove that
\begin{equation*}
\sum_{j=1}^m \gamma_j (\bar{A}_t(u_j)+\bar{B}_t(u_j))
~\stackrel{\mathrm{d}}{\to}~ \sum_{j=1}^m
\gamma_j(Z_{\alpha,\beta}(u_j)+A_{\alpha,\beta}(u_j))
\end{equation*}
for any $m\in\mn$, any $\gamma_1,\ldots, \gamma_m\in\mr$ and any
$0<u_1<\ldots < u_m<\infty$.

Set $$\bar{Z}_{k+1,t} := \sqrt{\mmp\{\xi>t\}/v(t)}\sum_{j=1}^m
\gamma_j (X_{k+1}(u_jt-S_k)-h(u_jt-S_k))\1_{\{S_k\leq
u_jt\}},\; k\in\mn_0,\; t>0.$$ Then $\sum_{j=1}^m \gamma_j
\bar{A}_t(u_j)=\sum_{k\geq 0}\bar{Z}_{k+1,t}$ and
\begin{align*}
\sum_{k\geq 0} \me_\F [\bar{Z}_{k+1,t}^2] & =
{\mmp\{\xi>t\}\over v(t)}\int_{[0,u_m]}\bigg(\sum_{j=1}^m \gamma^2_j v(t(u_j-y))\1_{[0,u_j]}(y)   \\
&\hphantom{={\mmp\{\xi>t\}\over v(t)}}
+2\sum_{1\leq r<l\leq m}\gamma_r\gamma_l
f(t(u_r-y),t(u_l-y))\1_{[0,u_r]}(y)\bigg) \, {\rm d}\nu(ty).
\end{align*}

With this at hand, we write
\begin{align}\label{equality}
\me_{\F} & \bigg[\!\exp\!\bigg(\!{\rm
i}z\sum_{j=1}^m\gamma_j\big(\bar{A}_t(u_j)+\bar{B}_t(u_j)\big)\!\bigg)\!\bigg]
= \exp\!\bigg(\!{\rm i}z\sum_{j=1}^m\gamma_j\bar{B}_t(u_j)\!\bigg)
\me_{\F} \bigg[\!\exp\!\bigg(\!{\rm i}z\sum_{k\geq 0}\bar{Z}_{k+1,t} \!\bigg) \!\bigg]  \notag  \\
& =~ \exp\!\bigg(\!{\rm
i}z\sum_{j=1}^m\gamma_j\bar{B}_t(u_j)\!\bigg) \bigg(\!\me_{\F}
\bigg[\!\exp \! \bigg(\!{\rm i}z\sum_{k\geq
0}\bar{Z}_{k+1,t}\!\bigg)\!\bigg] -\exp
\!\bigg(-\sum_{k\geq 0}\me_{\F} [\bar{Z}^2_{k+1,t}] z^2/2\!\bigg)\bigg)  \notag \\
&\hphantom{=~}+ \exp \!\bigg(\!{\rm i}z\sum_{j=1}^m\gamma_j\bar{B}_t(u_j)-\sum_{k\geq 0}\me_{\F}
[\bar{Z}^2_{k+1,t}] z^2/2\!\bigg)
\end{align}
for $z\in\mr$.

By Formula \eqref{0010000} of Lemma \ref{principal} (with
$b=q^{-1}(1-q)$)
\begin{align}\label{inter11}
\lambda_1 \sum_{j=1}^m & \gamma_j\bar{B}_t(u_j)+\lambda_2\sum_{k\geq 0} \me_\F [\bar{Z}_{k+1,t}^2]\notag	\\
& = \lambda_1 \sqrt{{\mmp\{\xi>t\}\over v(t)}} \int_{[0,u_m]}\sum_{j=1}^m \gamma_j h(t(u_j-y))\1_{[0,u_j]}(y) {\rm d}\nu(ty)\notag	\\
& \hphantom{=} +\lambda_2 \frac{\mmp\{\xi>t\}}{v(t)}\int_{[0,u_m]}\bigg(\sum_{j=1}^m \gamma^2_j v(t(u_j-y))\1_{[0,u_j]}(y)	\notag	\\
& \hphantom{= +\lambda_2 \frac{\mmp\{\xi>t\}}{v(t)}\int_{[0,u_m]}\bigg(}
+2\sum_{1\leq r< l \leq m}\gamma_r\gamma_l f(t(u_r-y),t(u_l-y))\1_{[0,u_r]}(y)\bigg) \, {\rm d}\nu(ty)	\notag	\\
& \stackrel{\mathrm{d}}{\to}~ \lambda_1 \sum_{j=1}^m\gamma_jA_{\alpha,\beta}(u_j)+\lambda_2 D_{\alpha,\beta}(u_1,\ldots, u_m)
\end{align}
for any real $\lambda_1$ and $\lambda_2$ with
$D_{\alpha,\beta}(u_1,\ldots, u_m)$ defined in \eqref{xxx}.
Hence,
\begin{align*}
\exp \! \bigg(\! & {\rm i}z \sum_{j=1}^m\gamma_j\bar{B}_t(u_j)-\sum_{k\geq 0}\me_{\F} [\bar{Z}^2_{k+1,t}| z^2/2\!\bigg)	\\
&\stackrel{\mathrm{d}}{\to}~
\exp\bigg({\rm i} z \sum_{j=1}^m\gamma_jA_{\alpha,\beta}(u_j)-D_{\alpha,\beta}(u_1,\ldots,u_m)z^2/2\!\bigg)
\end{align*}
for each $z\in\mr$, and thereupon
\begin{align*}
\lim_{t \to \infty} \me & \bigg[\!\exp\!\bigg(\!{\rm i}z \!
\sum_{j=1}^m\gamma_j\bar{B}_t(u_j)
-\sum_{k\geq 0}\me_{\F} [\bar{Z}^2_{k+1,t}] z^2/2 \! \bigg)\!\bigg]    \\
& =~ \me \bigg[\!\exp\!\bigg(\!{\rm i}z \!\sum_{j=1}^m \gamma_j
A_{\alpha,\beta}(u_j)-D_{\alpha,\beta}(u_1,\ldots, u_m) z^2/2 \!
\bigg)\!\bigg]\\&=~ \me \bigg[\!\exp\!\bigg(\!{\rm i}z
\!\sum_{j=1}^m \gamma_j
(A_{\alpha,\beta}(u_j)+Z_{\alpha,\beta}(u_j))\! \bigg)\!\bigg]
\end{align*}
by Lebesgue's dominated convergence theorem, the second equality
following from the fact that $\sum_{j=1}^m \gamma_j
Z_{\alpha,\beta}(u_j)$ is centered normal with variance
$D_{\alpha,\beta}(u_1,\ldots,u_m)$.

According to Formula \eqref{probab} of Lemma \ref{essential2}
$$\me_{\F} \bigg[\!\exp \!
\bigg(\!{\rm i}z\sum_{k\geq 0}\bar{Z}_{k+1,t}\!\bigg)\!\bigg]
-\exp \!\bigg(-\sum_{k\geq 0}\me_{\F} [\bar{Z}^2_{k+1,t}]
z^2/2\!\bigg) ~ \stackrel{\mathrm{\mmp}}{\to}~ 0.$$
Hence the first summand on the right-hand side of \eqref{equality} tends to
zero in probability if we verify that
\begin{equation}\label{0010121} \sum_{k\geq 0} \me_\F
[\bar{Z}_{k+1,t}^2]~\stackrel{\mathrm{d}}{\to}
~D_{\alpha,\beta}(u_1,\ldots,u_m)
\end{equation}
and
\begin{equation}\label{0008121}
\sum_{k\geq 0} \me_\F
[\bar{Z}_{k+1,t}^2\1_{\{|\bar{Z}_{k+1,t}|>y\}}]~\stackrel{\mathrm{\mmp}}{\to}~
0
\end{equation}
for all $y>0$. Relation \eqref{0010121} follows from
\eqref{inter11} with $\lambda_1=0$ and $\lambda_2=1$.
In view of the inequality \eqref{for_ref} relation \eqref{0008121} is implied by
\eqref{inter1} which has already been checked.
This finishes the proof for this case because
$\lim_{t \to \infty} \frac{\mmp\{\xi>t\}v(t)}{\mmp\{\xi>t\}v(t)+h(t)^2}=1-q$ ensures that
$$\sqrt{1-q}\sqrt{\mmp\{\xi>t\}\over v(t)}\quad \sim\quad \frac{\mmp\{\xi>t\}}{\sqrt{\mmp\{\xi>t\}v(t)+h(t)^2}}.	$$
The proof of Theorem \ref{Thm:mu=infty} is complete.
\end{proof}

\subsection{Proofs of Corollary \ref{Cor:J_1 convergence} and Proposition \ref{main3}}

\begin{proof}[Proof of Corollary \ref{Cor:J_1 convergence}]
We first show that the function $f(u,w)=\me [X(u)X(w)] -\me[X(u)]
\me[X(w)]$ is uniformly regularly varying in strips in $\mr_+^2$
of index $\beta$ with limit function $C$.

The assumption $q<1$ ensures
$$\lim_{t \to \infty} v(t)/h(t)^2 = \infty,$$
hence $\me [X(t)^2] \sim v(t)$, in particular, $v(t)$ is regularly
varying of index $\beta$ 
which must be nonnegative. Further, $\lim_{t \to \infty} \me [X(ut)] \me [X(wt)]
/ v(t)=0$ because
\begin{equation*}
\frac{\me [X(ut)]\me[X(wt)]}{v(t)} ~\leq~ {(\me [X(wt)])^2\over
v(wt)} {v(wt)\over v(t)}
\end{equation*}
for $0<u<w$ by monotonicity.
More importantly, $\lim_{t \to \infty} \me [X(ut)X(wt)]/v(t)=C(u,w)$, and the function $C$ is
continuous in $\mr^2_+$ (see Lemma 2 in \citep{Haan+Resnick:1979})
because, for each $t>0$, the function $(u,w) \mapsto \me
[X(ut)X(wt)]$ is increasing in each variable.
Recall that convergence of monotone functions to a continuous
limit is necessarily locally uniform. Therefore, in both
limit relations above the convergence is locally uniform in $\mr^2_+$. Hence
\begin{equation*}
\lim_{t \to \infty} \frac{f(ut,wt)}{v(t)}   ~=~ C(u,w)
\end{equation*}
locally uniformly in $\mr^2_+$ which entails the uniformity in
strips, as desired.

Recall that $\beta\geq 0$ and note that whenever $h$ is regularly
varying of index $\rho$ we must have $\rho\geq 0$. Putting
\begin{equation*}
Q_{\alpha,\rho}(u)  ~:=~    \int_{[0,u]}(u-y)^\rho \, {\rm
d}W^\leftarrow_\alpha(y),    \quad   u\geq 0
\end{equation*}
we observe that $Q_{\alpha,\rho}:=(Q_{\alpha,\rho}(u))_{u \geq
0}$ is a.s.\ continuous on $[0,\infty)$ with
$Q_{\alpha,\rho}(0)=0$ (in the case $\rho=0$ the process is just
$W^\leftarrow_\alpha$ and the random function $u \mapsto
W^\leftarrow_\alpha(u)$ is a.s.~continuous as the generalized
inverse of the a.s.~strictly increasing random function $t \mapsto
W_\alpha(t)$). Now we check that, by continuity, we can define
$Z_{\alpha,\beta}(0)$ to be equal to $0$. To this end, observe that
\begin{equation*}
\me [Z_{\alpha,\beta}(u)^2]
~=~ \me \bigg[\int_{[0,u]}(u-y)^\beta \, {\rm d}W^\leftarrow_\alpha(y)\bigg]
~=~ \frac{\Gamma(\beta+1)}{\Gamma(1-\alpha)\Gamma(\alpha+\beta+1)} u^{\beta+\alpha}
~\to~	0,	\quad u\downarrow 0
\end{equation*}
having used \eqref{usef} for the second equality.
Hence $\lim_{u\downarrow 0} Z_{\alpha,\beta}(u)=0$ in probability.
An important consequence of the fact that the limit processes are equal to zero at the origin
is that the weak convergence of the finite-dimensional distributions
proved in Theorem \ref{Thm:mu=infty} for $u>0$ can be extended to
$u\geq 0$.

Since, for each $t>0$, the process $(Y(ut))_{u \geq 0}$ is
a.s.~nondecreasing, according to Theorem 3 in \citep{Bingham:1971}
it remains to show that the limit processes are continuous in
probability. This is obvious for $Q_{\alpha,\rho}$. Further,
\begin{align*}
\mmp&\big\{|Z_{\alpha,\beta}(w)-Z_{\alpha,\beta}(u)|>\varepsilon\big|W^\leftarrow_\alpha\big\} \leq \frac{1}{\varepsilon^2} \bigg(\! \int_{[0,u]} \!
(u-y)^\beta \, {\rm d}W^\leftarrow_\alpha(y)\\
&\hspace{3cm}+ \int_{[0,w]} \!
(w-y)^\beta \, {\rm d}W^\leftarrow_\alpha(y) -2\int_{[0,u]} \!
C(u-y,w-y) \, {\rm d}W^\leftarrow_\alpha(y)\!\bigg)
\end{align*}
for $0<u<w$ and $\varepsilon>0$, by Chebyshev's inequality. As $w
\downarrow u$, the second term converges a.s.\ to
$\int_{[0,u]}(u-y)^\beta{\rm d}W^\leftarrow_\alpha(y)$ in view
of the aforementioned a.s.~continuity. By Fatou's lemma
\begin{equation*}
\liminf_{w \downarrow u} \int_{[0,u]} \!\! C(u-y,w-y) \, {\rm
d}W^\leftarrow_\alpha(y) \geq \int_{[0,u]} \!\! C(u-y,u-y) \,
{\rm d}W^\leftarrow_\alpha(y) = \int_{[0,u]} \! (u-y)^\beta{\rm
d}W^\leftarrow_\alpha(y)
\end{equation*}
as $C$ is continuous in $\mr^2_+$. Hence, $\lim_{w \downarrow u}
\mmp\big\{|Z_{\alpha,\beta}(u)-Z_{\alpha,\beta}(w)| \geq
\varepsilon\big|W^\leftarrow_\alpha \big\}=0$ a.s. The proof of
this convergence when $w\uparrow u$ is analogous. Applying now the
Lebesgue dominated convergence theorem we conclude that
$(Z_{\alpha,\beta}(u))_{u \geq 0}$ is continuous in probability.
The proof of Corollary \ref{Cor:J_1 convergence} is complete.
\end{proof}

\begin{proof}[Proof of Proposition \ref{main3}]
Let $Z^\ast_\alpha$ be a version of $Z_{\alpha,\alpha}$. We show
that for every interval $[a,b]$ with $0<a<b$,
\begin{equation}    \label{5665*}
\me \bigg[\underset{t\in
[W_\alpha(a),W_\alpha(b)]}{\sup}\,Z^{\ast}_\alpha(t)^2 \, \Big|
\, W^\leftarrow_\alpha \bigg] = \infty \quad\text{a.s.}
\end{equation}
To prove this, first notice that according to Theorem 2 in \citep{Fristedt:1979} there
exists an event $\Omega^\prime$ with $\mmp(\Omega^\prime)=1$ such
that for any $\omega\in \Omega^\prime$
$$  \limsup_{y\uparrow s}\frac{W_\alpha (s,\omega)-W_{\alpha}(y,\omega)}{(s-y)^{1/\alpha}}\leq r $$
for some deterministic constant $r\in (0,\infty)$ and some
$s:=s(\omega)\in [a,b]$. Fix any $\omega\in \Omega^\prime$. There
exists $s_1:=s_1(\omega)$ such that
$$  \big(W_{\alpha}(s,\omega)-W_{\alpha}(y,\omega)\big)^{-\alpha} \geq (s-y)^{-1} r^{-\alpha}/2    $$
whenever $y\in (s_1, s)$.
Set $t:=t(\omega)=W_{\alpha}(s,\omega)$ and write
\begin{eqnarray*}
\me \big[Z^{\ast}_\alpha(t)^2
\big|W^\leftarrow_\alpha\big](\omega) & = &
\int_{[0,t(\omega)]}(t(\omega)-y)^{-\alpha} \, {\rm d} W^{\leftarrow}_\alpha(y,\omega)   \\
& = &
\int_{[0,W_\alpha(s,\omega)]}(W_{\alpha}(s,\omega)-y)^{-\alpha} \, {\rm d}W^{\leftarrow}_\alpha(y,\omega)  \\
& = &
\int_0^s\big(W_{\alpha}(s,\omega)-W_\alpha(y,\omega)\big)^{-\alpha} \, {\rm d}y    \\
& \geq & \int_{s_1}^s \big(W_\alpha(s,\omega)-W_\alpha(y,\omega)\big)^{-\alpha} \, {\rm d}y    \\
& \geq & {1\over 2r^{\alpha}} \int_{s_1}^s (s-y)^{-1} \, {\rm d}y
~=~ +\infty.
\end{eqnarray*}
This proves \eqref{5665*}, for $t(\omega) \in [W_\alpha(a,\omega),W_\alpha(b,\omega)]$
for all $\omega\in \Omega^\prime$.

Now observe that if $Z^\ast_\alpha$ has paths in $D(0,\infty)$
a.s., then, for any $0<a<b$,
\begin{equation}    \label{1232}
\mmp\Big\{\Big|\sup_{t \in [W_\alpha(a),W_\alpha(b)]}
Z^\ast_\alpha(t) \Big| < \infty \, \Big| \, W^\leftarrow_\alpha
\Big\} = 1.
\end{equation}
Note that the process $W_\alpha$ is measurable with respect to the
$\sigma$-field generated by $W^\leftarrow_\alpha$ and that, given
$W^\leftarrow_\alpha$, the process $Z_\alpha^\ast$ is centered
Gaussian. Hence, from Theorem 3.2 on p.~63 in \citep{Adler:1990}
(applied to $(Z_\alpha^\ast(t))_{t \in
[W_\alpha(a),W_\alpha(b)]}$ and $(-Z_\alpha^\ast(t))_{t \in
[W_\alpha(a),W_\alpha(b)]}$ both conditionally given
$W^\leftarrow_\alpha$), we conclude that \eqref{1232} is
equivalent to
\begin{equation}    \label{5665}
\me \bigg[\underset{t\in
[W_\alpha(a),W_\alpha(b)]}{\sup}\,Z^{\ast}_\alpha(t)^2 \, \Big|
\, W^\leftarrow_\alpha \bigg] <\infty \quad\text{a.s.}
\end{equation}
which cannot hold due to \eqref{5665*}. Hence $Z^\ast_\alpha$ has
paths in $D(0,\infty)$ with probability less than $1$.

Finally, suppose that $C(u,w)=0$ for all $u\neq w$, $u,w>0$. Then,
given $W_\alpha^\leftarrow$, the Gaussian process $Z^\ast_\alpha$
has uncorrelated, hence independent values. For any fixed $t>0$
and any decreasing sequence $(h_n)_{n \in \mn}$ with $\lim_{n \to
\infty} h_n=0$ we infer
\begin{equation}    \label{zero}
\mmp\big\{Z^\ast_\alpha \quad\text{is right-continuous at} \
t\big|W_\alpha^\leftarrow\big\} \leq
\mmp\Big\{\limsup_{n\to\infty} Z^\ast_\alpha(t+h_n) =
Z^\ast_\alpha(t) \big| W_\alpha^\leftarrow\Big\} =   0   \ \
\text{a.s.}
\end{equation}
which proves that $Z^\ast_\alpha$ has paths in the Skorokhod space
with probability $0$. To justify \eqref{zero} observe that, given
$W_\alpha^\leftarrow$, the distribution of $Z^\ast_\alpha(t)$ is
Gaussian, hence continuous, while $\limsup_{n\to\infty}
Z^\ast_\alpha(t+h_n)$ is equal to a constant (possibly $\pm
\infty$) a.s.~by the Kolmogorov zero-one law which is applicable
because $Z^\ast_\alpha(t+h_1)$, $Z^\ast_\alpha(t+h_2),\ldots$ are
(conditionally) independent. The proof of Proposition \ref{main3} is complete.
\end{proof}

\section{Auxiliary results}	\label{sec:Appendix}

In this section, we collect technical results some of which are known
and stated here for the reader's convenience.
Others are extensions of known results
or important technical steps used more than once in the derivations of our main results.
We begin with a series of known results:
Lemma \ref{Lem:Thm 4.2 of Billingsley} is Theorem 4.2 in \citep{Billingsley:1968},
Lemma \ref{reg_var}(a) is Theorem 1.5.2 from \citep{Bingham+Goldie+Teugels:1989},
Lemma \ref{reg_var}(b) is a consequence of Theorem 1.5.3 in \citep{Bingham+Goldie+Teugels:1989},
Lemma \ref{reg_var}(c) is Karamata's theorem (Proposition 1.5.8 in \citep{Bingham+Goldie+Teugels:1989}),
Lemma \ref{ct} is Lemma 3.2 in \citep{Iksanov+Marynych+Meiners:2014}.

\begin{lemma}   \label{Lem:Thm 4.2 of Billingsley}
Let $(S,d)$ be an arbitrary metric space. Suppose that $(Z_{un},
Z_n)$ are random elements on $S \times S$. If $Z_{un}
\Rightarrow_{n} Z_u \Rightarrow_{u} Z$ on $(S,d)$ and
\begin{equation*}
\lim_{u} \limsup_n \mmp\{d(Z_{un}, Z_n)  > \varepsilon \} ~=~ 0
\end{equation*}
for every $\varepsilon > 0$, then $Z_n \Rightarrow Z$ on $(S,d)$,
as $n\to\infty$.
\end{lemma}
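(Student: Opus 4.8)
The plan is to reduce the assertion to the portmanteau theorem and then run the standard two-parameter $\limsup$ argument. Recall that $Z_n \Rightarrow Z$ on $(S,d)$ is equivalent to $\limsup_n \mmp\{Z_n \in F\} \leq \mmp\{Z \in F\}$ for every closed set $F \subseteq S$. So I would fix a closed set $F$ and, for $\varepsilon > 0$, write $F^\varepsilon := \{x \in S : \inf_{y \in F} d(x,y) \leq \varepsilon\}$ for its closed $\varepsilon$-neighbourhood; since $x \mapsto \inf_{y \in F} d(x,y)$ is $1$-Lipschitz, $F^\varepsilon$ is closed.

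First I would record the deterministic inclusion $\{Z_n \in F\} \subseteq \{Z_{un} \in F^\varepsilon\} \cup \{d(Z_{un},Z_n) > \varepsilon\}$, which gives
\[
\mmp\{Z_n \in F\} ~\leq~ \mmp\{Z_{un} \in F^\varepsilon\} + \mmp\{d(Z_{un},Z_n) > \varepsilon\}
\]
for all $u$ and all $n$. Taking $\limsup_n$ and using $Z_{un} \Rightarrow Z_u$ together with the portmanteau theorem applied to the closed set $F^\varepsilon$ yields
\[
\limsup_n \mmp\{Z_n \in F\} ~\leq~ \mmp\{Z_u \in F^\varepsilon\} + \limsup_n \mmp\{d(Z_{un},Z_n) > \varepsilon\}.
\]
Next I would let $u$ pass to its limit: the second term vanishes by hypothesis, and $Z_u \Rightarrow Z$ with the portmanteau theorem (again for the closed set $F^\varepsilon$) gives $\limsup_u \mmp\{Z_u \in F^\varepsilon\} \leq \mmp\{Z \in F^\varepsilon\}$, whence $\limsup_n \mmp\{Z_n \in F\} \leq \mmp\{Z \in F^\varepsilon\}$ for every $\varepsilon > 0$. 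Finally, since $F$ is closed, $\bigcap_{\varepsilon > 0} F^\varepsilon = F$, so letting $\varepsilon \downarrow 0$ and invoking continuity of the probability measure from above gives $\limsup_n \mmp\{Z_n \in F\} \leq \mmp\{Z \in F\}$. As $F$ was an arbitrary closed set, the portmanteau theorem delivers $Z_n \Rightarrow Z$.

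There is no genuine obstacle here; the one point to handle with care --- and the reason one does not argue directly with an arbitrary bounded continuous test function $f$ via an estimate on $|\me[f(Z_n)] - \me[f(Z_{un})]|$ --- is that the hypotheses control only the distance $d(Z_{un},Z_n)$, so a merely continuous $f$ does not cooperate, and one must either work with closed $\varepsilon$-neighbourhoods of closed sets (as above) or restrict to bounded uniformly continuous $f$ (which still characterises weak convergence by the portmanteau theorem). In the latter variant one splits $\me\big[|f(Z_n) - f(Z_{un})|\big]$ according to whether $d(Z_n,Z_{un})$ exceeds $\delta$, bounding the first part by the modulus of continuity $\omega_f(\delta)$ and the second by $2\,\|f\|_\infty\,\mmp\{d(Z_n,Z_{un}) > \delta\}$, and then lets $n \to \infty$, then $u$ to its limit, then $\delta \downarrow 0$, in exactly that order. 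In both variants the only real content is respecting the nesting of limits dictated by the way the three hypotheses are quantified.
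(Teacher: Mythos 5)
Your proof is correct, and it is essentially the paper's proof: the paper does not argue the lemma itself but cites it as Theorem 4.2 of Billingsley (1968), whose proof is exactly your argument with closed $\varepsilon$-neighbourhoods of a closed set, the portmanteau theorem, and the limits taken in the order $n$, then $u$, then $\varepsilon\downarrow 0$. Nothing further is needed.
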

\begin{lemma}\label{reg_var}
Let $g$ be regularly varying at $\infty$ of index $\rho$ and
locally bounded outside zero.

\noindent (a) Then, for all $0<a<b<\infty$,
$$	\lim_{t \to \infty} \sup_{a\leq s\leq b}\Big|\frac{g(st)}{g(t)}-s^\rho\Big| ~=~ 0.	$$

\noindent (b) Suppose $\rho\neq 0$. Then there exists a monotone
function such that $g(t)\sim u(t)$ as $t\to\infty$.

\noindent (c) Let $\rho>-1$ and $a>0$. Then $\int_a^t g(y){\rm
d}y\sim (\rho+1)tg(t)$ as $t\to\infty$.
\end{lemma}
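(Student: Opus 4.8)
The three assertions of Lemma~\ref{reg_var} are classical facts from the theory of regular variation, and I would deduce them from the cited results in \citep{Bingham+Goldie+Teugels:1989}; the underlying arguments are as follows.

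For part (a), write $g(t) = t^\rho \ell(t)$ with $\ell$ slowly varying. Since $s \mapsto s^\rho$ is continuous on $[a,b]$, it suffices to prove $\sup_{a \le s \le b} |\ell(st)/\ell(t) - 1| \to 0$. Passing to $H(x) := \log \ell(e^x)$, the hypothesis becomes $H(x+y) - H(x) \to 0$ as $x \to \infty$ for each fixed $y$, and the claim is that this holds uniformly for $y$ in compact sets --- the uniform convergence theorem. The standard proof combines the pointwise convergence with a Baire-category argument applied to the functions $x \mapsto H(x+y) - H(x)$; alternatively, one uses the integral representation $\ell(t) = c(t)\exp\big(\int_1^t \varepsilon(u)u^{-1}\, {\rm d}u\big)$ with $c(t) \to c > 0$ and $\varepsilon(u) \to 0$, from which the uniformity is immediate.

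For part (b), assume $\rho > 0$ (the case $\rho < 0$ is symmetric, with infima replacing suprema). Since $g$ is locally bounded outside zero and $g(t) \to \infty$, fix $t_0$ with $g > 0$ on $[t_0,\infty)$ and set $u(t) := \sup_{t_0 \le s \le t} g(s)$; this is nondecreasing and $u(t) \ge g(t)$. To obtain $u(t) \le (1+o(1))g(t)$, use part (a): given $\varepsilon > 0$ there is $A > 1$ with $g(st) \le (1+\varepsilon)s^\rho g(t) \le (1+\varepsilon)A^\rho g(t)$ for $s \in [A^{-1},1]$ and $t$ large, which controls $\sup_{t/A \le s \le t} g(s)$, while the contribution of $s \in [t_0, t/A]$ is $o(g(t))$ because $g$ is regularly varying of positive index and hence $\sup_{t_0 \le s \le t/A}g(s) = O(g(t/A)) = o(g(t))$.

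For part (c), put $G(t) := \int_a^t g(y)\, {\rm d}y$, and note $t g(t) = t^{1+\rho}\ell(t) \to \infty$ since $1+\rho > 0$. A change of variables gives, for $\lambda > 1$,
\[
\frac{G(\lambda t) - G(t)}{t g(t)} ~=~ \int_1^\lambda \frac{g(st)}{g(t)}\, {\rm d}s ~\longrightarrow~ \int_1^\lambda s^\rho\, {\rm d}s ~=~ \frac{\lambda^{1+\rho} - 1}{1+\rho},
\]
the limit being justified by part (a) together with dominated convergence. From this one deduces $G(t)/(tg(t)) \to 1/(1+\rho)$ by the usual real-variable argument: the left-hand side above converges, as $t\to\infty$, to a nonconstant function of $\lambda$, which combined with the regular variation of $t \mapsto tg(t)$ of index $1+\rho$ forces convergence of $G(t)/(tg(t))$ and pins down the limiting constant. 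The only genuinely delicate ingredient is the uniform convergence theorem underpinning (a); granting it, parts (b) and (c) are routine, and in practice I would simply invoke Theorems~1.5.2, 1.5.3 and Proposition~1.5.8 of \citep{Bingham+Goldie+Teugels:1989}.
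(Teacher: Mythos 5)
The paper offers no proof of this lemma at all: it simply cites Theorem 1.5.2, Theorem 1.5.3 and Proposition 1.5.8 (Karamata) of \citep{Bingham+Goldie+Teugels:1989}, which is exactly what you do in your final sentence, so your proposal takes the same approach as the paper. Your sketches of the classical arguments behind these citations are essentially the standard ones (modulo minor slips, e.g.\ in (b) the bound you need for $s\in[A^{-1},1]$, $\rho>0$, is $g(st)\leq(1+\varepsilon)s^\rho g(t)\leq(1+\varepsilon)g(t)$ rather than $(1+\varepsilon)A^\rho g(t)$, and the $[t_0,t/A]$ range is cleanest via Potter's bounds), but none of that detail is required to match the paper.
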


\begin{lemma}\label{ct}
$c(t)$ appearing in \eqref{eq:FLT for N(t)} is regularly varying at $\infty$ of index $1/\alpha$.
\end{lemma}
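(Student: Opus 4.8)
The plan is to treat the three cases in the definition of $c(t)$ below \eqref{eq:FLT for N(t)} separately. When $\sigma^2 = \Var\xi < \infty$ one has $\alpha = 2$ and $c(t) = \sigma\sqrt{t}$, which is trivially regularly varying at $\infty$ of index $1/2 = 1/\alpha$, so there is nothing to prove. In the two remaining cases $c$ is a positive continuous function characterized by
$$
\lim_{t\to\infty}\frac{t\,\ell^\ast(c(t))}{c(t)^{\alpha}} ~=~ 1,
$$
with $\alpha = 2$ in the infinite-variance normal-attraction case and $\alpha \in (1,2)$ otherwise; I would handle both simultaneously.

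The key idea is that $c$ is, up to asymptotic equivalence, an asymptotic inverse of an explicit regularly varying function. First I would put $g(x) := x^{\alpha}/\ell^\ast(x)$ for large $x$; since $\ell^\ast$ is slowly varying and $\alpha > 0$, the function $g$ is regularly varying at $\infty$ of positive index $\alpha$, and in particular $g(x) \to \infty$ as $x \to \infty$. Next I would observe that $c(t) \to \infty$: were $c$ bounded along some sequence $t_n \to \infty$, then $g(c(t_n))$ would remain bounded, contradicting the displayed relation, which reads precisely $g(c(t)) \sim t$. Thus $c$ is an asymptotic inverse of $g$. Appealing now to the standard theory of asymptotic inverses of regularly varying functions (see \cite[Section~1.5.7]{Bingham+Goldie+Teugels:1989}), $g$ possesses an asymptotic inverse $g^{\leftarrow}$ which is regularly varying of index $1/\alpha$ and unique up to asymptotic equivalence; since $g(c(t)) \sim t$, this forces $c(t) \sim g^{\leftarrow}(t)$, and hence $c$ is regularly varying at $\infty$ of index $1/\alpha$.

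The one point deserving care — and the only real obstacle — is the passage from $g(c(t)) \sim t$ to $c(t) \sim g^{\leftarrow}(t)$, i.e.\ the uniqueness part of the asymptotic inversion. If one prefers not to quote it, it can be obtained by a sandwich argument: by Lemma \ref{reg_var}(b) pick an increasing $\widetilde{g}$ with $\widetilde{g}(x) \sim g(x)$, so that still $\widetilde{g}(c(t)) \sim t$; its generalized inverse $\widetilde{g}^{\leftarrow}$ is regularly varying of index $1/\alpha$, and from $(1-\varepsilon)t \le \widetilde{g}(c(t)) \le (1+\varepsilon)t$ for all large $t$ one deduces, using monotonicity, that $c(t)$ is squeezed between $\widetilde{g}^{\leftarrow}((1-\varepsilon)t)$ and $\widetilde{g}^{\leftarrow}((1+\varepsilon)t)$; letting $t \to \infty$ and then $\varepsilon \downarrow 0$, the regular variation of $\widetilde{g}^{\leftarrow}$ of index $1/\alpha$ yields $c(t) \sim \widetilde{g}^{\leftarrow}(t)$. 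Either way, $c$ inherits regular variation of index $1/\alpha$, completing the proof.
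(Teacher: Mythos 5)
Your argument is correct. Note that the paper does not prove this lemma at all: it simply cites Lemma 3.2 of \citep{Iksanov+Marynych+Meiners:2014}, and the standard proof behind that citation is exactly the asymptotic-inversion argument you give — the defining relation $\lim_{t\to\infty} t\ell^\ast(c(t))/c(t)^\alpha = 1$ says $g(c(t)) \sim t$ for $g(x) = x^\alpha/\ell^\ast(x)$, which is regularly varying of index $\alpha>0$, and then $c(t) \sim g^\leftarrow(t)$ by the uniqueness of asymptotic inverses (or by your sandwich argument through a monotone equivalent $\widetilde g$, which is a clean way to avoid quoting the uniqueness statement). The only point worth flagging is minor: your claim that $c(t_n)$ bounded forces $g(c(t_n))$ bounded implicitly uses that $\ell^\ast$ is bounded away from $0$ on compact sets, which is not automatic for a general slowly varying function; but it is harmless here, since one may modify $\ell^\ast$ on a bounded set without affecting anything (and the concrete $\ell^\ast$ arising in the domain-of-attraction setting, e.g.\ $\me[\xi^2\1_{\{\xi\le t\}}]$ or $t^\alpha\mmp\{\xi>t\}$, already has this property), or one can argue directly that $t_n\ell^\ast(c(t_n))/c(t_n)^\alpha \to 1$ with $c(t_n)$ in a fixed compact set is impossible. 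With that understood, your proof is complete and essentially the canonical one.
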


Lemma \ref{Lem:continuous mapping D} follows from Lemma A.5 in \citep{Iksanov:2013}
in combination with the continuous mapping theorem.
We note in passing that \citep{Kurtz+Protter:1991} and Chapter VI, $\S6$c in \citep{Jacod+Shiryaev:2003}
are classical references concerning the convergence of stochastic integrals.

\begin{lemma}\label{Lem:continuous mapping D}
Let $0\leq a<b<\infty$.
\begin{itemize}
    \item[(a)]
        Suppose that, for each $t>0$, $f_t \in D$ and that the random process $(\mathcal{X}_t(y))_{a \leq y \leq b}$ has almost surely increasing
        path. Assume further that $\lim_{t \to \infty} f_t(y)=f(y)$ uniformly in $y \in [a,b]$
        and that $\mathcal{X}_t \Rightarrow \mathcal{X}$, $t\to\infty$ in the $J_1$-topology on $D[a,b]$,
        the paths of $(\mathcal{X}(y))_{a \leq y \leq b}$ being almost surely continuous. Then
        \begin{equation*}
        \int_{[a,b]} f_t(y) \, {\rm d}\mathcal{X}_t(y) ~\stackrel{\mathrm{d}}{\to} ~ \int_{[a,b]} f(y) \, {\rm d}\mathcal{X}(y),\quad t\to\infty.
        \end{equation*}
    \item[(b)]
        Assume that $\mathcal{X}_t \Rightarrow \mathcal{X}$, $t\to\infty$, in
        the $J_1$- or $M_1$-topology on $D[a,b]$ and that, as $t\to\infty$,
        finite measures $\rho_t$ converge weakly on $[a, b]$ to $\delta_c$,
        the Dirac measure concentrated at $c$. If $\mathcal{X}$ is almost surely  continuous at $c$, then
        \begin{equation*}
       \int_{[a,b]} \mathcal{X}_t(y) \, \rho_t(\dy) ~\dod~ \mathcal{X}(c),\quad t\to\infty.
        \end{equation*}
\end{itemize}
\end{lemma}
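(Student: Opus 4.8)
The plan is to reduce both statements to purely deterministic continuity properties of the relevant Stieltjes-integral functionals and then to lift these to the distributional level; the latter can be done either by a direct appeal to the continuous mapping theorem (restricted to the closed subspace of nondecreasing paths, on which the functional in part (a) is continuous at continuous points) or, more transparently, via the Skorokhod representation theorem, which is the route I would take.

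For part (a), I would first record the elementary fact that if $x_n\to x$ uniformly on $[a,b]$ with every $x_n$ nondecreasing and $x$ continuous, then the associated Lebesgue--Stieltjes measures converge weakly on $[a,b]$, and hence $\int_{[a,b]} g_n\,{\rm d}x_n\to\int_{[a,b]} g\,{\rm d}x$ whenever $g$ is continuous and $g_n\to g$ uniformly; indeed $\big|\int_{[a,b]}(g_n-g)\,{\rm d}x_n\big|\le \|g_n-g\|_\infty\,(x_n(b)-x_n(a))\to 0$ because $x_n(b)-x_n(a)$ stays bounded, while $\int_{[a,b]} g\,{\rm d}x_n\to\int_{[a,b]} g\,{\rm d}x$ by the Helly--Bray lemma. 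Since $\mathcal{X}$ has a.s.\ continuous paths, $\mathcal{X}_t\Rightarrow\mathcal{X}$ in the $J_1$-topology is equivalent to convergence in the uniform topology on $D[a,b]$, so by the Skorokhod representation theorem I may assume that $\sup_{y\in[a,b]}|\mathcal{X}_t(y)-\mathcal{X}(y)|\to 0$ a.s.\ on a common probability space (a.s.\ monotonicity of the $\mathcal{X}_t$ being preserved under the coupling). The deterministic fact above then yields $\int_{[a,b]} f_t\,{\rm d}\mathcal{X}_t\to\int_{[a,b]} f\,{\rm d}\mathcal{X}$ a.s., hence in distribution.

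For part (b) the limit $\mathcal{X}$ need not be continuous away from $c$, so one cannot upgrade the convergence to the uniform topology. The deterministic input I would isolate instead is: if $x_n\to x$ in $D[a,b]$ in the $J_1$- (respectively $M_1$-) topology and $x$ is continuous at $c$, then $\sup_n\|x_n\|_\infty<\infty$ and
\begin{equation*}
\lim_{\delta\downarrow 0}\,\limsup_{n\to\infty}\,\sup_{y\in[a,b],\,|y-c|\le\delta}|x_n(y)-x(c)| ~=~ 0.
\end{equation*}
Granted this, for finite measures $\rho_n\to\delta_c$ weakly one splits $[a,b]$ into $\{|y-c|\le\delta\}$, on which $|x_n(y)-x(c)|$ is uniformly small for $\delta$ small and $n$ large, and $\{|y-c|>\delta\}$, whose $\rho_n$-mass tends to $0$ while $x_n$ remains bounded; this gives $\int_{[a,b]} x_n\,{\rm d}\rho_n\to x(c)$. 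Applying this with $x_n=\mathcal{X}_t$, $x=\mathcal{X}$ and $\rho_n=\rho_t$ on the probability space supplied by the Skorokhod representation theorem yields $\int_{[a,b]}\mathcal{X}_t(y)\,\rho_t({\rm d}y)\dod\mathcal{X}(c)$.

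The main obstacle is the deterministic oscillation estimate in part (b) in the $M_1$-case: $M_1$-convergence permits $x_n$ to follow the ``intermediate values'' across jumps of $x$, so one must genuinely use that $c$ is a continuity point of $x$ to rule out such oscillations of $x_n$ near $c$. This is precisely the content we borrow from Lemma A.5 of \citep{Iksanov:2013} (equivalently, from the parametric-representation description of $M_1$-convergence in \citep{Whitt:2002}); for $J_1$ the same estimate is routine, since there $x_n(\lambda_n(\cdot))\to x(\cdot)$ uniformly for homeomorphisms $\lambda_n\to\mathrm{id}$, which together with continuity of $x$ at $c$ gives the bound.
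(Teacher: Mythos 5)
Your proposal is correct, and it essentially fills in the details that the paper dispenses with in one line: the paper simply invokes Lemma A.5 of \citep{Iksanov:2013} together with the continuous mapping theorem, whereas you rebuild the argument from scratch via the Skorokhod representation theorem plus two deterministic facts (weak convergence of Lebesgue--Stieltjes measures under uniform convergence of increasing functions with continuous limit for part (a); the local oscillation estimate at a continuity point of the limit under $J_1$/$M_1$ convergence for part (b)). The deterministic content is the same as what the cited lemma encapsulates --- you say so yourself for the $M_1$ case --- so the two routes are essentially equivalent; yours buys self-containedness at the cost of re-deriving the parametric-representation estimate, while the paper's buys brevity at the cost of sending the reader to \citep{Iksanov:2013}. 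Your reduction of part (b) to the statement that $\sup_{|y-c|\le\delta}|x_n(y)-x(c)|$ is small for large $n$ and small $\delta$, combined with the boundedness of $\|x_n\|_\infty$ and $\rho_n(\{|y-c|>\delta\})\to 0$ by the portmanteau theorem, is exactly right, including at the endpoint $c=a$, which is the case actually used in the proof of Lemma \ref{main4}.

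One point needs a small repair in part (a): your deterministic lemma is stated for a \emph{continuous} limit integrand $g$, and the Helly--Bray step is where you use this. The lemma, however, only guarantees $f\in D$ (a uniform limit of functions in $D$), and in the paper's application inside Lemma \ref{principal} the limit integrand genuinely has jumps (at the points $\rho u_j$, coming from the indicators $\1_{[0,\rho u_j]}$). The fix is immediate and worth recording: since the limiting path $x=\mathcal{X}(\omega)$ is continuous and increasing, its Stieltjes measure ${\rm d}x$ is atomless, while a function in $D$ has at most countably many discontinuities; hence $f$ is ${\rm d}x$-a.e.\ continuous and bounded on $[a,b]$, and the extended portmanteau theorem (weak convergence against bounded, a.e.-continuous integrands) gives $\int_{[a,b]} f\,{\rm d}x_n\to\int_{[a,b]} f\,{\rm d}x$; your estimate $\big|\int_{[a,b]}(f_t-f)\,{\rm d}x_n\big|\le\|f_t-f\|_\infty\,(x_n(b)-x_n(a))$ then finishes the argument unchanged. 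With this adjustment your proof covers the lemma in the generality in which it is used.
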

\begin{lemma}
Let $W$ be a nonnegative random variable with Laplace
transform $\varphi(s) := \me [e^{-sW}]$, $s\geq 0$. Then, for
$\theta\in (0,1)$,
\begin{equation}    \label{0023}
\me [W^{-\theta}] ~=~ \frac{1}{\Gamma(\theta)} \int_0^\infty
s^{\theta-1} \varphi(s) \, {\rm d}s.
\end{equation}
\end{lemma}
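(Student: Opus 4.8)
The plan is to reduce the identity to a single application of Tonelli's theorem, after rewriting the negative moment through the gamma integral.

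First I would record the elementary scalar identity
\[
w^{-\theta} ~=~ \frac{1}{\Gamma(\theta)} \int_0^\infty s^{\theta-1} e^{-sw} \, {\rm d}s, \qquad w>0,
\]
which follows from the definition $\Gamma(\theta)=\int_0^\infty u^{\theta-1}e^{-u}\,{\rm d}u$ by the substitution $u=sw$ (legitimate for $w>0$ and $\theta>0$). For $w=0$ the left-hand side is $+\infty$ by convention, while the right-hand side equals $\Gamma(\theta)^{-1}\int_0^\infty s^{\theta-1}\,{\rm d}s=+\infty$ because $\theta-1>-1$ forces divergence of the integral at $+\infty$; hence the identity is valid in $[0,\infty]$ for every $w\geq 0$.

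Next I would substitute $w=W(\omega)$ and take expectations. The map $(s,\omega)\mapsto s^{\theta-1}e^{-sW(\omega)}$ is jointly measurable and nonnegative, so Tonelli's theorem allows interchanging $\me$ with $\int_0^\infty(\cdot)\,{\rm d}s$, yielding
\[
\me [W^{-\theta}]
~=~ \frac{1}{\Gamma(\theta)}\int_0^\infty s^{\theta-1}\,\me [e^{-sW}]\,{\rm d}s
~=~ \frac{1}{\Gamma(\theta)}\int_0^\infty s^{\theta-1}\varphi(s)\,{\rm d}s,
\]
which is the assertion.

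There is essentially no obstacle here. The only points deserving a word of care are that the statement is to be understood in the extended sense (both sides may equal $+\infty$, which happens precisely when $\mmp\{W=0\}>0$, since then $\varphi(s)\geq\mmp\{W=0\}$ and the $s$-integral diverges, matching $\me[W^{-\theta}]=\infty$), and that Tonelli rather than Fubini is the correct tool because no integrability of $W^{-\theta}$ is assumed a priori.
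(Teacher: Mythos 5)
Your proof is correct, but it follows a different route from the paper's. You rewrite $w^{-\theta}$ via the gamma integral $w^{-\theta}=\Gamma(\theta)^{-1}\int_0^\infty s^{\theta-1}e^{-sw}\,{\rm d}s$ and then apply Tonelli to swap $\me$ and $\int_0^\infty\cdot\,{\rm d}s$; this is the most direct analytic argument, needs no extra randomization, and cleanly covers the case where both sides are $+\infty$. The paper instead uses a probabilistic device: it introduces a standard exponential random variable $R$ independent of $W$, observes that $\varphi(s)=\mmp\{R/W>s\}$, and then evaluates $\me[(R/W)^\theta]$ in two ways — via the tail formula $\me[Z^\theta]=\theta\int_0^\infty s^{\theta-1}\mmp\{Z>s\}\,{\rm d}s$ and via independence, $\me[(R/W)^\theta]=\Gamma(1+\theta)\me[W^{-\theta}]$ — so that dividing by $\Gamma(1+\theta)=\theta\Gamma(\theta)$ gives \eqref{0023}. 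The two arguments are morally equivalent (the tail formula and the factorization both conceal a Fubini--Tonelli interchange), so what you gain is mainly self-containedness and explicitness about measurability and the extended-value sense of the identity, while the paper's version is shorter and reuses a standard probabilistic identity. One small inaccuracy in your closing aside: it is not true that $\me[W^{-\theta}]=\infty$ happens \emph{precisely} when $\mmp\{W=0\}>0$ — the negative moment can diverge even when $W>0$ a.s., e.g.\ when the law of $W$ puts enough mass near $0$ (density behaving like $w^{-a}$ with $a+\theta\geq 1$). This does not affect your proof, since the Tonelli argument establishes the identity in $[0,\infty]$ regardless of finiteness, but the equivalence claim should be dropped or weakened to an implication.
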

\begin{proof}
Let $R$ be a random variable with the standard exponential law
which is independent of $W$. Then, for $s \geq 0$,
$\varphi(s)=\mmp\{R/W>s\}$. Hence, $\Gamma(1+\theta) \me
[W^{-\theta}] = \me [(R/W)^\theta] = \theta \int_0^\infty
s^{\theta-1}\varphi(s) \, {\rm d}s$.
\end{proof}
\begin{lemma}\label{essential2}
Let $(Z_{k,t})_{k\in\mn,\,t>0}$ be a family of random variables
defined on some probability space $(\Omega, \mathcal{R},\mmp)$ and
let $\mg$ be a sub-$\sigma$-algebra of $\mathcal{R}$. Assume that,
given $\mg$ and for each fixed $t>0$, the $Z_{k,t}$, $k\in\mn$ are independent.
If
\begin{equation}\label{0010} \sum_{k\geq 0} \me_\mg
[Z_{k+1,t}^2] \ \dod \ D,\quad t\to\infty
\end{equation}
for a random variable $D$ and
\begin{equation}\label{0008}
\sum_{k\geq 0} \me_\mg [Z_{k+1,t}^2\1_{\{|Z_{k+1,t}|>y\}}] \
\tp\ 0, \quad	t\to\infty
\end{equation}
for all $y>0$, then, for each $z\in\mr$,
\begin{equation}\label{conditional}
\me_\mg \bigg[\exp\bigg({\rm i}z \sum_{k\geq
0}Z_{k+1,t}\bigg)\bigg] \ \dod \ \exp(-D z^2/2),\quad t\to\infty,
\end{equation}
\begin{equation}\label{char}
\me \bigg[\exp\bigg({\rm i}z \sum_{k\geq
0}Z_{k+1,t}\bigg)\bigg] \to \me \bigg[\exp\big(-Dz^2/2\big)\bigg],\quad t\to\infty
\end{equation}
and
\begin{equation}\label{probab}
\me_\mg \bigg[\exp\bigg({\rm i}z \sum_{k\geq
0}Z_{k+1,t}\bigg)\bigg] - \me_\mg \bigg[\exp\bigg({\rm i}z
\sum_{k\geq 0}\widehat{Z}_{k+1,t}\bigg)\bigg] \ \tp \ 0, \ \
t\to\infty,
\end{equation}
where, given $\mg$, $\widehat{Z}_{1,t}, \widehat{Z}_{2,t},\ldots$ are conditionally independent normal random variables with mean $0$
and variance $\me_\mg [Z^2_{k+1,t}]$, i.e.,
$$\me_\mg [\exp({\rm i}z\widehat{Z}_{k+1,t})] = \exp(-\me_\mg [Z^2_{k+1,t}] z^2/2),\ k\in\mn_0.$$
\end{lemma}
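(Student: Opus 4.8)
The plan is to establish a conditional form of the Lindeberg--Feller central limit theorem, working throughout with a regular conditional distribution given $\mg$ and using that $\me_\mg[Z_{k+1,t}]=0$ for all $k$ and $t$, as holds in every application of the lemma. Write $W_t:=\sum_{k\geq 0}Z_{k+1,t}$, $\widehat{W}_t:=\sum_{k\geq 0}\widehat{Z}_{k+1,t}$ and $D_t:=\sum_{k\geq 0}\me_\mg[Z_{k+1,t}^2]$. In every application only finitely many of the $Z_{k+1,t}$ are nonzero, so $D_t<\infty$ a.s.; in any case this must hold for $W_t$ to be well-defined, and then, conditionally on $\mg$, the series defining $W_t$ and $\widehat{W}_t$ converge a.s. By conditional independence,
\[
\me_\mg\bigl[e^{{\rm i}zW_t}\bigr]=\prod_{k\geq 0}\me_\mg\bigl[e^{{\rm i}zZ_{k+1,t}}\bigr]=:\prod_{k\geq 0}\phi_{k,t}(z),\qquad \me_\mg\bigl[e^{{\rm i}z\widehat{W}_t}\bigr]=\prod_{k\geq 0}e^{-\me_\mg[Z_{k+1,t}^2]z^2/2}=e^{-D_tz^2/2}.
\]
Thus \eqref{probab} is precisely the claim that $\me_\mg[e^{{\rm i}zW_t}]-e^{-D_tz^2/2}\tp 0$ for each fixed $z\in\mr$. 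Granting this, \eqref{conditional} follows, since $e^{-D_tz^2/2}\dod e^{-Dz^2/2}$ by \eqref{0010} and the continuous mapping theorem, so that Slutsky's lemma applies; and \eqref{char} follows by taking unconditional expectations, because $\me_\mg[e^{{\rm i}zW_t}]-e^{-D_tz^2/2}$ is bounded in modulus by $2$ and tends to $0$ in probability, so $\me[e^{{\rm i}zW_t}]-\me[e^{-D_tz^2/2}]\to 0$, while $\me[e^{-D_tz^2/2}]\to\me[e^{-Dz^2/2}]$ by \eqref{0010} and bounded convergence.

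Everything therefore reduces to proving $\me_\mg[e^{{\rm i}zW_t}]-e^{-D_tz^2/2}\tp 0$. Fix $z\in\mr$ and $y>0$. Using $\bigl|\prod_k a_k-\prod_k b_k\bigr|\leq\sum_k|a_k-b_k|$ for complex numbers of modulus at most $1$ and comparing both $\phi_{k,t}(z)$ and $e^{-\me_\mg[Z_{k+1,t}^2]z^2/2}$ with their common quadratic proxy $1-\tfrac12 z^2\me_\mg[Z_{k+1,t}^2]$, I would bound $|\me_\mg[e^{{\rm i}zW_t}]-e^{-D_tz^2/2}|$ by
\[
\sum_{k\geq 0}\Bigl|\phi_{k,t}(z)-1+\tfrac12 z^2\me_\mg[Z_{k+1,t}^2]\Bigr|+\sum_{k\geq 0}\Bigl|e^{-\me_\mg[Z_{k+1,t}^2]z^2/2}-1+\tfrac12 z^2\me_\mg[Z_{k+1,t}^2]\Bigr|.
\]
For the first sum, the inequality $|e^{{\rm i}x}-1-{\rm i}x+\tfrac12 x^2|\leq\min(|x|^3/6,x^2)$ together with $\me_\mg[Z_{k+1,t}]=0$, split over $\{|Z_{k+1,t}|\leq y\}$ and its complement, shows the $k$-th term is at most $\tfrac{|z|^3y}{6}\me_\mg[Z_{k+1,t}^2]+z^2\me_\mg[Z_{k+1,t}^2\1_{\{|Z_{k+1,t}|>y\}}]$, hence the first sum is at most $\tfrac{|z|^3y}{6}D_t+z^2L_t(y)$, where $L_t(y):=\sum_{k\geq 0}\me_\mg[Z_{k+1,t}^2\1_{\{|Z_{k+1,t}|>y\}}]$. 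For the second sum, $|e^{-a}-1+a|\leq a^2/2$ for $a\geq 0$ gives the bound $\tfrac18 z^4\sum_k\me_\mg[Z_{k+1,t}^2]^2\leq\tfrac18 z^4D_t\sup_k\me_\mg[Z_{k+1,t}^2]\leq\tfrac18 z^4D_t\bigl(y^2+L_t(y)\bigr)$, using $\me_\mg[Z_{k+1,t}^2]\leq y^2+L_t(y)$.

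It then remains to let $t\to\infty$ and $y\downarrow 0$ in
\[
\bigl|\me_\mg[e^{{\rm i}zW_t}]-e^{-D_tz^2/2}\bigr|\leq D_t\Bigl(\tfrac{|z|^3y}{6}+\tfrac18 z^4y^2\Bigr)+\bigl(z^2+\tfrac18 z^4D_t\bigr)L_t(y).
\]
By \eqref{0010} the family $(D_t)_{t>0}$ is tight, and by the conditional Lindeberg condition \eqref{0008}, $L_t(y)\tp 0$ for each fixed $y>0$. Given $\varepsilon,\delta>0$, I would choose $M$ with $\sup_{t>0}\mmp\{D_t>M\}<\delta$, then choose $y$ so small that $M(\tfrac{|z|^3y}{6}+\tfrac18 z^4y^2)<\varepsilon/2$, and finally use $L_t(y)\tp 0$ to get $\mmp\{(z^2+\tfrac18 z^4M)L_t(y)>\varepsilon/2\}<\delta$ for all large $t$; on $\{D_t\leq M\}$ the right-hand side above is then below $\varepsilon$, so $\mmp\{|\me_\mg[e^{{\rm i}zW_t}]-e^{-D_tz^2/2}|>\varepsilon\}\leq 2\delta$ for large $t$, which completes the argument since $\delta$ is arbitrary. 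There is no serious difficulty here — it is a conditional rerun of the classical Lindeberg--Feller argument — and the only points needing a word of care are the conditioning bookkeeping: that $D_t<\infty$ a.s.\ (so the conditional characteristic-function factorization and the telescoping of the infinite products are legitimate) and that the conditional means vanish, both of which are automatic in the applications; the one mildly technical ingredient is the uniform-in-$k$ characteristic-function expansion above, which is standard.
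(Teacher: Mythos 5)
Your proposal is correct and takes essentially the same route as the paper: both prove \eqref{probab} by comparing the conditional characteristic function of $\sum_{k\geq 0}Z_{k+1,t}$ factor-by-factor with that of the conditionally Gaussian sum via the quadratic proxy $1-\tfrac12 z^2\me_\mg[Z_{k+1,t}^2]$ and the expansion $|e^{{\rm i}x}-1-{\rm i}x+x^2/2|\leq \min(x^2,|x|^3/6)$ together with the conditional Lindeberg condition \eqref{0008} and tightness of $\sum_k\me_\mg[Z_{k+1,t}^2]$, and then deduce \eqref{conditional} by Slutsky and \eqref{char} by bounded convergence. The only differences are cosmetic (you use $|e^{-a}-1+a|\leq a^2/2$ where the paper estimates third moments of the $\widehat{Z}_{k+1,t}$ after first showing $\sup_k\me_\mg[Z_{k+1,t}^2]\tp 0$), and you rightly make explicit the conditional centering $\me_\mg[Z_{k+1,t}]=0$, which the paper's proof also relies on although it is not stated in the lemma.
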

\begin{proof}
Apart from minor modifications, the following argument can be
found in the proof of Theorem 4.12 in \citep{Kallenberg:1997} in
which the weak convergence of the row sums in triangular arrays to
a normal law is investigated. For any $\varepsilon>0$,
$$\underset{k\geq 0}{\sup}\,\me_\mg [Z^2_{k+1,t}] \leq \varepsilon^2+ \underset{k\geq 0}{\sup}\,\me_\mg [Z^2_{k+1,t}\1_{\{|Z_{k+1,t}|>\varepsilon\}}]
\leq \varepsilon^2 + \sum_{k\geq 0} \me_\mg
[Z_{k+1,t}^2\1_{\{|Z_{k+1,t}|>\varepsilon\}}].  $$ Using
\eqref{0008} and letting first $t\to\infty$ and then
$\varepsilon\downarrow 0$, we infer
\begin{equation}\label{0021}
\underset{k\geq 0}{\sup}\,\me_\mg [Z^2_{k+1,t}] \ \tp \ 0.
\end{equation}
In view of \eqref{0010}
\begin{equation}\label{0016}
\me_\mg \bigg[\exp\bigg({\rm i}z \sum_{k\geq
0}\widehat{Z}_{k+1,t}\bigg) \bigg] = \exp\bigg(-\sum_{k\geq
0}\me_\mg [Z^2_{k+1,t}] z^2/2\bigg) \ \dod \ \exp(-Dz^2/2)
\end{equation}
for each $z\in\mr$. Next, we show that $\sum_{k\geq 0}Z_{k+1,t}$
has the same distributional limit as $\sum_{k\geq
0}\widehat{Z}_{k+1,t}$ as $t \to \infty$. To this end, for
$z\in\mr$, consider
\begin{align*}
\bigg|\me_\mg & \bigg[\exp\bigg({\rm i}z \sum_{k\geq
0}Z_{k+1,t}\bigg)\bigg]
-\me_\mg \bigg[\exp\bigg({\rm i}z \sum_{k\geq 0}\widehat{Z}_{k+1,t}\bigg)\bigg]\bigg| \\
&= \bigg|\prod_{k\geq 0}\me_\mg \big[\exp\big({\rm i}z
Z_{k+1,t}\big)\big]
-\prod_{k\geq 0} \me_\mg \big[\exp\big({\rm i}z \widehat{Z}_{k+1,t}\big)\big]\bigg|   \\
&\leq \sum_{k\geq 0}\Big|\me_\mg \big[\exp\big({\rm i}z
Z_{k+1,t}\big)\big]
- \me_\mg \big[\exp\big({\rm i}z \widehat{Z}_{k+1,t}\big)\big] \Big|  \\
&\leq \sum_{k\geq 0}\Big|\me_\mg \big[\exp\big({\rm i}z Z_{k+1,t}\big)\big]-1+\frac{z^2}{2} \me_\mg \big[Z^2_{k+1,t}\big]\Big|  \\
&\hphantom{\leq}+ \sum_{k\geq 0}\Big|\me_\mg \big[\exp\big({\rm
i}z \widehat{Z}_{k+1,t}\big)\big]-1
+\frac{z^2}{2} \me_\mg \big[\widehat{Z}^2_{k+1,t}\big]\Big|   \\
&\leq z^2\sum_{k\geq 0}\me_\mg \big[Z^2_{k+1,t}\big(1\wedge
6^{-1}|zZ_{k+1,t}|\big)\big] +z^2\sum_{k\geq 0}\me_\mg
\big[\widehat{Z}^2_{k+1,t}\big(1\wedge
6^{-1}|z\widehat{Z}_{k+1,t}|\big)\big],
\end{align*}
where, to arrive at the last line, we have utilized
$|\me_\mg[\cdot]| \leq \me_\mg [|\cdot|]$ and the inequality
$$  |e^{{\rm i}z}-1-{\rm i}z+z^2/2|\leq z^2\wedge 6^{-1}|z|^3, \quad	z\in\mr,$$
which can be found, for instance, in Lemma 4.14 of
\citep{Kallenberg:1997}. For any $\varepsilon\in (0,1)$ and $z \neq
0$
\begin{align*}
\sum_{k\geq 0} \me_\mg \big[Z^2_{k+1,t}\big(1\wedge
6^{-1}|zZ_{k+1,t}|\big)\big] \leq \varepsilon \sum_{k \geq 0 }
\me [Z^2_{k+1,t}] +\sum_{k\geq 0}\me_\mg
\big[Z^2_{k+1,t}\1_{\{|Z_{k+1,t}|>6\varepsilon/|z|\}}\big].
\end{align*}
Recalling \eqref{0008} and letting first $t\to\infty$ and then
$\varepsilon\downarrow 0$ give
\begin{equation*}
\sum_{k\geq 0}\me_\mg \big[Z^2_{k+1,t}\big(1\wedge
6^{-1}|zZ_{k+1,t}|\big)\big] \ \tp \ 0.
\end{equation*}
Further,
\begin{eqnarray*}
\sum_{k\geq 0} \me_\mg \big[\widehat{Z}^2_{k+1,t}\big(1 \wedge
6^{-1}|z\widehat{Z}_{k+1,t}|\big)\big] &\leq& \frac{|z|}{6}
\sum_{k\geq 0} \me_\mg \big[|\widehat{Z}_{k+1,t}|^3\big]\\
= \frac{\sqrt{2}|z|}{3\sqrt{\pi}} \sum_{k \geq 0}(\me_\mg [Z_{k+1,t}^2])^{3/2}  
&\leq& \frac{\sqrt{2}|z|}{3\sqrt{\pi}} \Big(\underset{k\geq
0}{\sup}\,\me_\mg [Z^2_{k+1,t}]\Big)^{\!1/2} \, \sum_{k\geq 0}
\me_\mg [Z^2_{k+1,t}].
\end{eqnarray*}
Here, \eqref{0010} and \eqref{0021} yield
\begin{equation*}
\sum_{k\geq 0}\me_\mg \big[\widehat{Z}^2_{k+1,t}\big(1\wedge
6^{-1}|z\widehat{Z}_{k+1,t}|\big)\big]\ \tp \ 0.
\end{equation*}
Thus, we have already proved \eqref{probab} which together with
\eqref{0016} implies \eqref{conditional}. Relation \eqref{char}
follows from \eqref{conditional} by
taking expectations and using uniform integrability. The proof of Lemma \ref{essential2} is
complete.
\end{proof}

\begin{lemma}	\label{integrals}
Let $\rho>-1$, $\alpha\in (0,1)$ and $C$ denote the limit function
for $f(u,w)=\me [X(u)X(w)]-\me [X(u)]\me [X(w)]$ wide-sense
regularly varying in $\mr_+^2$ of index $\beta$ for some
$\beta\geq -\alpha$. Then the integrals
\begin{equation}    \label{0041}
\int_{[0,s]} C(s-y,t-y) \, {\rm d}W_\alpha^\leftarrow(y),
\quad	0<s<t<\infty\quad\text{and}\quad \int_{[0,s]}(s-y)^\rho \, {\rm d}W_\alpha^\leftarrow(y),\quad s>0
\end{equation}
exist as Lebesgue-Stieltjes integrals and are almost surely
finite. Furthermore, the process $Z_{\alpha,\beta}$ is
well-defined.
\end{lemma}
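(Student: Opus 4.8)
\emph{Proof proposal.} The plan is to reduce the whole statement to a single a.s.\ finiteness estimate for power integrals against $W^\leftarrow_\alpha$, and then to verify positive semidefiniteness of the conditional covariance. Throughout, recall that $W^\leftarrow_\alpha$ is a.s.\ nondecreasing, continuous and vanishes at $0$, so that ${\rm d}W^\leftarrow_\alpha$ is an a.s.\ locally finite (random) Borel measure with no atom at any fixed point; since the integrands below are nonnegative, the integrals in \eqref{0041} are automatically well-defined as elements of $[0,\infty]$, and the task is to show they are a.s.\ finite.

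The first step is to prove that $\int_{[0,s]}(s-y)^\rho\,{\rm d}W^\leftarrow_\alpha(y)<\infty$ a.s.\ for every $\rho>-1$ and $s>0$. For $\rho\ge 0$ this is trivial, the integral being bounded by $s^\rho W^\leftarrow_\alpha(s)$. For $\rho\in(-1,0)$ the integrand blows up at $y=s$, and I expect this to be the main obstacle: pathwise bounds via the modulus of continuity of $W^\leftarrow_\alpha$ are not enough, since its local Hölder exponent is only $\alpha$, which for $\rho\le-\alpha$ is too small. I would instead pass to expectations. From the Laplace exponent one gets the potential measure of $W_\alpha$: inverting
$$\int_0^\infty e^{-\lambda y}\,\me[W^\leftarrow_\alpha(y)]\,{\rm d}y = \frac{1}{\lambda}\int_0^\infty e^{-\Gamma(1-\alpha)t\lambda^\alpha}\,{\rm d}t = \frac{1}{\Gamma(1-\alpha)\lambda^{\alpha+1}}$$
yields $\me[W^\leftarrow_\alpha(y)] = y^\alpha/(\Gamma(1-\alpha)\Gamma(\alpha+1))$, i.e.\ $\me[{\rm d}W^\leftarrow_\alpha(y)] = y^{\alpha-1}\,{\rm d}y/(\Gamma(1-\alpha)\Gamma(\alpha))$, and then Tonelli gives
$$\me\Big[\int_{[0,s]}(s-y)^\rho\,{\rm d}W^\leftarrow_\alpha(y)\Big] = \frac{1}{\Gamma(1-\alpha)\Gamma(\alpha)}\int_0^s(s-y)^\rho y^{\alpha-1}\,{\rm d}y = \frac{B(\rho+1,\alpha)}{\Gamma(1-\alpha)\Gamma(\alpha)}\,s^{\rho+\alpha}<\infty$$
precisely because $\rho>-1$ and $\alpha>0$; hence the integral is a.s.\ finite. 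This handles the second integral in \eqref{0041}. For the first integral, with $0<s<t$, in the fictitious case $C(s-y,t-y)=0$ identically and there is nothing to prove, while in the regularly varying case I would bound $0\le C(s-y,t-y)\le 2^{-1}\big((s-y)^\beta+(t-y)^\beta\big)$ via \eqref{eq:impo ineq2}: the $(s-y)^\beta$-part is a.s.\ finite by the first step (as $\beta\ge-\alpha>-1$), and for $y\in[0,s]$ one has $t-y\in[t-s,t]\subset(0,\infty)$, so $(t-y)^\beta$ is bounded and its integral is at most $\big(\sup_{t-s\le z\le t}z^\beta\big)W^\leftarrow_\alpha(s)<\infty$ a.s.; measurability of $y\mapsto C(s-y,t-y)$ is clear since $C$ is a pointwise limit of the Borel kernels $(u,w)\mapsto f(ut,wt)/v(t)$.

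Finally, to see that $Z_{\alpha,\beta}$ is well-defined I would check that, conditionally on $W^\leftarrow_\alpha$, the kernel $\kappa(u,w):=\int_{[0,u\wedge w]}C(u-y,w-y)\,{\rm d}W^\leftarrow_\alpha(y)$ is a.s.\ a genuine covariance: finiteness was just established, and for positive semidefiniteness fix $m\in\mn$, $0<u_1<\dots<u_m$ and $\gamma_1,\dots,\gamma_m\in\mr$; using the a.s.\ finiteness of each $\int|C(u_i-y,u_j-y)|\1_{[0,u_i\wedge u_j]}(y)\,{\rm d}W^\leftarrow_\alpha(y)$ to interchange the finite sum with the integral,
$$\sum_{i,j=1}^m\gamma_i\gamma_j\,\kappa(u_i,u_j) = \int_{[0,\infty)}\Big(\sum_{i,j:\,y<u_i,\,y<u_j}\gamma_i\gamma_j\,C(u_i-y,u_j-y)\Big)\,{\rm d}W^\leftarrow_\alpha(y)\ \ge\ 0,$$
because for each fixed $y$ the inner sum is a quadratic form built from $C$ at the positive points $\{u_i-y:u_i>y\}$, and $C$ is positive semidefinite on $\mr_+^2$ as a pointwise limit of the positive semidefinite kernels $v(t)^{-1}\Cov[X(\cdot\,t),X(\cdot\,t)]$. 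In the fictitious/trivial case $\kappa$ is diagonal with nonnegative entries $\int_{[0,u]}(u-y)^\beta\,{\rm d}W^\leftarrow_\alpha(y)$, so positive semidefiniteness is immediate. Thus, for a.e.\ realization of $W^\leftarrow_\alpha$ Kolmogorov's extension theorem furnishes a centered Gaussian law with covariance $\kappa$, and mixing these conditional laws over the distribution of $W^\leftarrow_\alpha$ produces a process with the conditional law prescribed in Definition \ref{definition_process_z}. $\hfill\Box$
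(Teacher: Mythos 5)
Your proposal is correct and follows essentially the same route as the paper: a.s.\ finiteness via the first-moment identity $\me[W^\leftarrow_\alpha(y)]=y^\alpha/(\Gamma(1-\alpha)\Gamma(1+\alpha))$ together with Tonelli and the bound \eqref{eq:impo ineq2}, and well-definedness of $Z_{\alpha,\beta}$ by writing the quadratic form as an integral of pointwise-nonnegative quadratic forms, $C$ being positive semidefinite as a limit of normalized covariance kernels. The only differences are cosmetic (you obtain the constant by Laplace-transform inversion rather than via $\me[W_\alpha(1)^{-\alpha}]$ and \eqref{0023}, and you treat the $(t-y)^\beta$ term pathwise instead of in expectation), so no changes are needed.
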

\begin{proof}
To begin with, we intend to show that
\begin{equation}
\label{004222} \me \bigg[\int_{[0,s]}(s-y)^\rho \, {\rm
d}W_\alpha^\leftarrow(y)\bigg] ~<~ \infty,\quad s>0.
\end{equation}
To this end, we first derive the following identity
\begin{equation}    \label{equa}
\me [W_\alpha^\leftarrow(y)] ~=~
\frac{1}{\Gamma(1-\alpha)\Gamma(1+\alpha)} y^\alpha ~=:~ d_\alpha
y^\alpha,\quad y \geq 0.
\end{equation}
Indeed,
\begin{eqnarray*}
\me [W_\alpha^\leftarrow(y)] & = & \int_0^\infty
\mmp\{W_\alpha^\leftarrow(y)>t\} \, {\rm d}t ~=~ \int_0^\infty
\mmp\{W_\alpha(t)\leq y\} \, {\rm d}t\\
&=&\int_0^\infty \mmp\{t^{1/\alpha}W_\alpha(1)\leq y\} \, {\rm d}t=\int_0^\infty \mmp\{W_\alpha(1)^{-\alpha} y^\alpha\geq t\}\, {\rm d}t\\
&=&\me [W_\alpha(1)^{-\alpha}] y^\alpha ~=~ \frac{1}{\Gamma(1-\alpha)\Gamma(1+\alpha)} y^{\alpha}
\end{eqnarray*}
where the last equality follows from \eqref{0023} with
$\theta=\alpha$ and $\varphi(s)=\exp(-\Gamma(1-\alpha)s^\alpha)$.
Hence
\begin{eqnarray}
\me \bigg[\int_{[0,s]} (s-y)^\rho \, {\rm
d}W_\alpha^\leftarrow(y) \bigg] & = &
{\alpha\over \Gamma(1-\alpha)\Gamma(1+\alpha)} \int_0^s (s-y)^\rho y^{\alpha-1} \, {\rm d}y \notag\\
&=&{\Gamma(\rho+1)\over
\Gamma(1-\alpha)\Gamma(\rho+\alpha+1)}s^{\rho+\alpha}\label{usef}
\end{eqnarray}
which proves \eqref{004222}.

Passing to the proof of
\begin{equation}\label{0042} \me \bigg[\int_{[0,s]}C(s-y,t-y) \, {\rm
d}W_\alpha^\leftarrow(y)\bigg] ~<~ \infty,\quad 0<s<t
\end{equation}
we assume that $C(u,w)>0$ for some $u\neq w$, $u,w>0$
and then observe that in view of \eqref{equa} and \eqref{eq:impo ineq2},
\begin{eqnarray*}
\me \bigg[\int_{[0,s]} C(s-y,t-y) \, {\rm
d}W_\alpha^\leftarrow(y) \bigg] & = &
\alpha d_\alpha \int_{0}^{s} C(s-y,t-y)y^{\alpha-1} \, {\rm d}y \\
& \leq & {\alpha d_\alpha\over 2}
\int_0^s\big((s-y)^\beta+(t-y)^\beta\big)y^{\alpha-1} \, {\rm d}y
~<~ \infty
\end{eqnarray*}
since $\alpha>0$ and $\beta \geq -\alpha > -1$. This proves \eqref{0042}.

Further, we check that the process $Z_{\alpha,\beta}$ is
well-defined. To this end, we show that the function $\Pi(s,t)$
defined by
$$  \Pi(s,t) ~:=~ \int_{[0,s]}C(s-y,t-y) \, {\rm d}W^\leftarrow_\alpha(y), \quad 0<s\leq t  $$
is nonnegative definite, i.e., for any $m\in\mn$, any
$\gamma_1,\ldots, \gamma_m\in\mr$ and any
$0<u_1<\ldots<u_m<\infty$
\begin{align*}
&\sum_{j=1}^m  \gamma_j^2 \Pi(u_j,u_j)+2\sum_{1\leq r<l\leq m} \gamma_r\gamma_l \Pi(u_r, u_l)   \\
&= \sum_{i=1}^{m-1}
\int_{(u_{i-1},u_i]}\bigg(\sum_{k=i}^m\gamma_k^2 C(u_k-y,u_k-y)
+2\sum_{i \leq r < l \leq m}\gamma_r\gamma_l C(u_r-y,u_l-y)\bigg) \, {\rm d}W^\leftarrow_\alpha(y)  \\
&\hphantom{=} +\gamma_m^2 \int_{(u_{m-1},u_m]}C(u_m-y, u_m-y)
\,{\rm d}W^\leftarrow_\alpha(y) \geq 0 \quad \text{a.s.},
	\end{align*}
where $u_0:=0$. Since the second term is nonnegative a.s., it
suffices to prove that so is the first. The function $(u,w)
\mapsto C(u,w)$, $0<u\leq w$ is nonnegative definite as a limit of
nonnegative definite functions. Hence, for each $1\leq i\leq m-1$
and $y\in (u_{i-1},u_i)$,
$$\sum_{k=i}^m\gamma_k^2 C(u_k-y, u_k-y)+2\sum_{i\leq r<l\leq
m}\gamma_r\gamma_l C(u_r-y,u_l-y)\geq 0.$$ Thus, the process
$Z_{\alpha,\beta}$ does exist as a conditionally Gaussian
process with covariance function $\Pi(s,t)$, $0 < s \leq t$.
\end{proof}

Lemma \ref{principal} is designed to facilitate the proofs of
Proposition \ref{Prop:mu=infty} and Theorem \ref{Thm:mu=infty}.
\begin{lemma}\label{principal}
Suppose that condition \eqref{eq:regular_variation_inf_mean} holds
for some $\alpha\in (0,1)$ and some $\ell^\ast$, and that
$f(u,w)=\Cov [X(u)X(w)]$ is either uniformly regularly varying in
strips in $\mr_+^2$ or fictitious regularly varying in $\mr_+^2$,
in either of the cases, of index $\beta$ for some $\beta\geq
-\alpha$ and with limit function $C$. If
\begin{equation}\label{ref1} \lim_{\rho\uparrow 1} \limsup_{t\to\infty}{\mmp\{\xi>t\}\over v(t)}
\int_{(\rho z,z]}v(t(z-y)) \, {\rm d}U(ty) ~=~ 0
\end{equation}
for all $z>0$, then
\begin{align}	\label{0010000}
&\lambda_1 \sqrt{\frac{\mmp\{\xi>t\}}{v(t)}}
 \int_{[0,u_m]}\sum_{j=1}^m \gamma_j h((u_j-y)t)\1_{[0,u_j]}(y) \, {\rm d}\nu(ty)\notag	\\
&+\lambda_2 \frac{\mmp\{\xi>t\}}{v(t)}  \int_{[0,u_m]}  \bigg(\sum_{j=1}^m \gamma^2_j v((u_j-y)t)\1_{[0,u_j]}(y)	\notag	\\
&\hphantom{+\lambda_2 \frac{\mmp\{\xi>t\}}{v(t)} \int_{[0,u_m]}  \bigg(}\,
+2  \sum_{1\leq i<j\leq m}\gamma_i\gamma_j f((u_i-y)t,(u_j-y)t)\1_{[0,u_i]}(y)  \bigg) {\rm d}\nu(ty)\notag \\
&\stackrel{\mathrm{d}}{\to}~ \lambda_1b^{-1/2}
\sum_{j=1}^m\gamma_j \int_{[0,u_j]}(u_j-y)^{(\beta-\alpha)/2}\, {\rm d}W^\leftarrow_\alpha(y)	\\
&+ \lambda_2 \bigg(\sum_{j=1}^m \gamma^2_j
\int_{[0,u_j]}(u_j-y)^\beta \, {\rm d}W^\leftarrow_\alpha(y)\notag \\
&\hphantom{+\lambda_2 \frac{\mmp\{\xi>t\}}{v(t)} \int_{[0,u_m]}  \bigg(}\,+2\sum_{1\leq i<j\leq m} \gamma_i\gamma_j\int_{[0,u_i]}C(u_i-y,
u_j-y) \, {\rm d}W^\leftarrow_\alpha(y)\bigg)\notag
\end{align}
for any $m\in\mn$, any real $\gamma_1,\ldots,\gamma_m$, any
$0<u_1<\ldots<u_m<\infty$ and any real $\lambda_1$ and $\lambda_2$
provided that whenever $\lambda_1>0$
\begin{equation}\label{additional}
\lim_{t \to \infty} {v(t)\mmp\{\xi>t\}\over h(t)^2}=b\in (0,\infty)
\end{equation}
and
\begin{equation}\label{ref3}
\lim_{\rho\uparrow 1}
\limsup_{t\to\infty}\sqrt{{\mmp\{\xi>t\}\over v(t)}} \int_{(\rho
z,z]}h((z-y)t) \, {\rm d}U(ty) ~=~ 0
\end{equation}
for all $z>0$.
\end{lemma}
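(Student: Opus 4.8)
The plan is to realise the entire left-hand side of \eqref{0010000} as a fixed linear combination of Lebesgue--Stieltjes integrals driven by the single nondecreasing process $N_t(y):=\mmp\{\xi>t\}\nu(ty)$, $y\ge 0$, and to pass to the limit using $N_t\Jc W^\leftarrow_\alpha$ on $D$, which is \eqref{weak}, together with the a.s.\ continuity of $W^\leftarrow_\alpha$. Dividing the $\lambda_2$--block by $v(t)/\mmp\{\xi>t\}$ turns it into $\sum_j\gamma_j^2\int_{[0,u_j]}\tfrac{v((u_j-y)t)}{v(t)}\,{\rm d}N_t(y)+2\sum_{i<j}\gamma_i\gamma_j\int_{[0,u_i]}\tfrac{f((u_i-y)t,(u_j-y)t)}{v(t)}\,{\rm d}N_t(y)$, while the $\lambda_1$--block equals $\sum_j\gamma_j\int_{[0,u_j]}\tfrac{h((u_j-y)t)}{\sqrt{v(t)\mmp\{\xi>t\}}}\,{\rm d}N_t(y)$; here, when $\lambda_1>0$, condition \eqref{additional} gives $\sqrt{v(t)\mmp\{\xi>t\}}\sim\sqrt{b}\,|h(t)|$ and, since $v\cdot\mmp\{\xi>\cdot\}$ is regularly varying of index $\beta-\alpha$, that $h^2$ is regularly varying of index $\beta-\alpha$, so that (after possibly replacing $h$ by $-h$) $h$ is regularly varying of index $(\beta-\alpha)/2$. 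After these reductions, the left side of \eqref{0010000} is $\lambda_1\cdot(\text{$\lambda_1$--block})+\lambda_2\cdot(\text{$\lambda_2$--block})$ and the right side is obtained by replacing each integrand by its pointwise limit and $N_t$ by $W^\leftarrow_\alpha$.

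The core step is the truncated convergence. Fix $\varepsilon\in(0,u_1)$. By Lemma \ref{reg_var}(a), $v((u_j-y)t)/v(t)\to(u_j-y)^\beta$ and $h((u_j-y)t)/h(t)\to(u_j-y)^{(\beta-\alpha)/2}$ uniformly in $y\in[0,u_j-\varepsilon]$, and by \eqref{0040}, $f((u_i-y)t,(u_j-y)t)/v(t)\to C(u_i-y,u_j-y)$ uniformly in $y\in[0,u_i-\varepsilon]$ (in the fictitious case the cross limit is simply $0$, which does not affect the argument). Since each integrand is then bounded on the respective truncated interval, $N_t$ has a.s.\ nondecreasing paths, $N_t\Jc W^\leftarrow_\alpha$ by \eqref{weak}, and $W^\leftarrow_\alpha$ is a.s.\ continuous, Lemma \ref{Lem:continuous mapping D}(a) applies to each integral, and, the integrals being continuous functionals of the single process $N_t$, the continuous mapping theorem yields that the truncation of the left side of \eqref{0010000} to the intervals $[0,u_j-\varepsilon]$ converges in distribution to the analogous truncation of the right side, namely to $\lambda_1 b^{-1/2}\sum_j\gamma_j\int_{[0,u_j-\varepsilon]}(u_j-y)^{(\beta-\alpha)/2}\,{\rm d}W^\leftarrow_\alpha(y)$ plus the corresponding $\lambda_2$--term. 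Letting $\varepsilon\downarrow0$, this limit converges a.s.\ to the full right-hand side of \eqref{0010000} by monotone convergence, the integrands $(u_j-y)^\beta$, $(u_j-y)^{(\beta-\alpha)/2}$ and $0\le C(u_i-y,u_j-y)\le 2^{-1}((u_i-y)^\beta+(u_j-y)^\beta)$ being nonnegative with a.s.\ finite integrals against $W^\leftarrow_\alpha$ by Lemma \ref{integrals} (all exponents exceed $-1$ because $\beta\ge-\alpha$ and $\alpha\in(0,1)$).

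It remains, by Lemma \ref{Lem:Thm 4.2 of Billingsley}, to show that the discarded ``diagonal strips'' are negligible uniformly in $t$, i.e.\ that $\lim_{\varepsilon\downarrow0}\limsup_{t\to\infty}\mmp\{|L_t-L_t^{(\varepsilon)}|>\delta\}=0$ for every $\delta>0$, where $L_t$ and $L_t^{(\varepsilon)}$ denote the left side of \eqref{0010000} and its $\varepsilon$-truncation. By Markov's inequality this reduces to bounding the corresponding expectations, which amounts to replacing ${\rm d}\nu(ty)$ by ${\rm d}U(ty)$ with $U(t)=\me[\nu(t)]$ the renewal function. The $v$--strips $\tfrac{\mmp\{\xi>t\}}{v(t)}\int_{(u_j-\varepsilon,u_j]}v((u_j-y)t)\,{\rm d}U(ty)$ are exactly controlled by \eqref{ref1} taken with $z=u_j$ and $\rho=1-\varepsilon/u_j$; for the $f$--cross strips one uses $|f(s,s')|\le 2^{-1}(v(s)+v(s'))$ to bound them by such a $v$--strip plus $\tfrac{\mmp\{\xi>t\}}{v(t)}\int_{(u_i-\varepsilon,u_i]}v((u_j-y)t)\,{\rm d}U(ty)$, and the latter tends to $0$ as $\varepsilon\downarrow0$ uniformly in $t$ because $v((u_j-y)t)\le{\rm const}\cdot v(t)$ for $y$ in this range (by regular variation, $u_j-y$ staying bounded away from $0$) while $U$ is regularly varying of index $\alpha$ with $\mmp\{\xi>t\}U(at)$ bounded as $t\to\infty$; and the $h$--strips, relevant only when $\lambda_1>0$, are controlled by \eqref{ref3} with $z=u_j$. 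Assembling the truncated convergence, the a.s.\ limit of the truncated right side, and the uniform negligibility of the strips via Lemma \ref{Lem:Thm 4.2 of Billingsley} yields \eqref{0010000}.

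The main obstacle is precisely this last point: obtaining the uniform-in-$t$ smallness of the near-diagonal contributions, which is where the renewal-measure estimates packaged in the hypotheses \eqref{ref1} and \eqref{ref3} (and the regular variation of $U$) must be invoked; once these are granted, the remainder is a routine application of the continuous-mapping machinery for Lebesgue--Stieltjes integrals with monotone integrators. A secondary subtlety is the normalisation bookkeeping for the $\lambda_1$--block, where \eqref{additional} both supplies the constant $b^{-1/2}$ and forces $h$ to be regularly varying of index $(\beta-\alpha)/2$, so that $h((u_j-y)t)/h(t)$ has the required power limit.
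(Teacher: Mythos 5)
Your proposal is correct and follows essentially the same route as the paper's proof: rewrite everything as integrals of uniformly convergent integrands against $\mmp\{\xi>t\}\nu(t\cdot)$, pass to the limit on the truncated intervals via Lemma \ref{Lem:continuous mapping D}(a) and \eqref{weak}, handle the truncated right-hand side by monotone convergence and Lemma \ref{integrals}, and dispose of the near-diagonal strips through Lemma \ref{Lem:Thm 4.2 of Billingsley}, Markov's inequality, \eqref{eq:impo ineq1}, \eqref{ref1}, \eqref{ref3} and the limit $\mmp\{\xi>t\}U(yt)\to y^\alpha/(\Gamma(1-\alpha)\Gamma(1+\alpha))$, which is exactly the paper's estimate \eqref{eq:for later use}. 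The only differences (additive rather than proportional truncation, and treating the integrals separately before recombining) are cosmetic.
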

\begin{proof}
We only prove the lemma in the case that $\lambda_2\neq 0$ and
$C(u,w)>0$ for some $u\neq w$. Fix any $\rho \in (0,1)$ such that
$\rho u_m>u_{m-1}$ ($u_0:=0$).

Since $v$ is regularly varying at $\infty$ of index $\beta$, we infer
$$\lim_{t \to \infty} v((u-y)t)/v(t)=(u-y)^\beta,
\quad
\lit h((u-y)t)/\sqrt{v(t)\mmp\{\xi>t\}}=b^{-1/2}(u-y)^{(\beta-\alpha)/2}$$
for each $y\in [0,u)$, respectively, having utilized
\eqref{additional} for the second relation. Furthermore, the
convergence in each of these limit relations is uniform in $y\in
[0,\rho u]$ by Lemma \ref{reg_var}(a).
Since $f(u,w)$ is uniformly
regularly varying in strips in $\mr_+^2$ we conclude that for
$r<l$ the convergence $\lim_{t \to \infty}
f((u_r-y)t,(u_l-y)t)/v(t)=C(u_r-y,u_l-y)$ is uniform in $y \in [0,\rho u_r]$, too. Hence,
\begin{align*}
 \lim_{t \to \infty}& \bigg(\lambda_1 \frac{\sum_{j=1}^m \gamma_j h((u_j-y)t)\1_{[0,\rho u_j]}(y)}{\sqrt{v(t)\mmp\{\xi>t\}}}	+\lambda_2 \frac{ \sum_{j=1}^m \gamma^2_j v((u_j-y)t)\1_{[0,\rho u_j]}(y)}{v(t)}\\
&+ 2\lambda_2\frac{\sum_{1\leq r<l\leq m}\gamma_r\gamma_l f((u_r-y)t,(u_l-y)t)\1_{[0,\rho u_r]}(y)}{v(t)}\bigg)\\
&= \lambda_1b^{-1/2}\sum_{j=1}^m \gamma_j (u_j-y)^{(\beta-\alpha)/2}\1_{[0,\rho u_j]}(y)	\\
&+\lambda_2\bigg(\sum_{j=1}^m \gamma^2_j (u_j-y)^\beta \1_{[0,\rho u_j]}(y) + 2\sum_{1\leq r<l\leq m}\gamma_r\gamma_l
C(u_r-y,u_l-y)\1_{[0,\rho u_r]}(y)\bigg)
\end{align*}
uniformly in  $y\in [0,\rho u_m]$. The random function
$W^\leftarrow_\alpha$ is a.s.~continuous as has already been
explained in the proof of Corollary \ref{Cor:J_1 convergence}.
Thus\footnote{Since $y\mapsto \nu(y)$ has a.s. increasing paths,
so does $y\mapsto \mmp\{\xi>t\}\nu(ty)$ for each $t>0$.}, in view of \eqref{weak}
\begin{align*}
&\lambda_1 \sqrt{\frac{\mmp\{\xi>t\}}{v(t)}} \int_{[0,\rho u_m]}
\sum_{j=1}^m \gamma_j h((u_j-y)t)\1_{[0,\rho u_j]}(y) \, {\rm d}\nu(ty)	\notag	\\
&+\lambda_2 \frac{\mmp\{\xi>t\}}{v(t)} \int_{[0,\rho u_m]}
\bigg(\sum_{j=1}^m \gamma^2_j v((u_j-y)t)\1_{[0,\rho u_j]}(y)	\notag	\\
&\hphantom{~+\lambda_2 \frac{\mmp\{\xi>t\}}{v(t)} \int_{[0,\rho u_m]} \bigg(}
+2\sum_{1\leq r<l\leq m}\gamma_r\gamma_l f((u_r-y)t,(u_l-y)t)\1_{[0,\rho u_r]}(y)\bigg) \, {\rm d}\nu(ty)	\notag	\\
\end{align*}
\begin{align}	\label{inter}
&\stackrel{\mathrm{d}}{\to}\lambda_1b^{-1/2} \sum_{j=1}^m\gamma_j \int_{[0,\rho u_j]}(u_j-y)^{(\beta-\alpha)/2}\, {\rm d}W^\leftarrow_\alpha(y)+\lambda_2 \bigg(\sum_{j=1}^m \gamma^2_j \int_{[0,\rho u_j]}(u_j-y)^\beta \, {\rm d}W^\leftarrow_\alpha(y)\notag \\
&\hphantom{~+\lambda_2 \frac{\mmp\{\xi>t\}}{v(t)} \int_{[0,\rho u_m]} \bigg(}+~2\sum_{1\leq r<l\leq m} \gamma_r\gamma_l\int_{[0,\rho u_r]}C(u_r-y, u_l-y) \, {\rm d}W^\leftarrow_\alpha(y)\bigg)
\end{align}
by Lemma \ref{Lem:continuous mapping D}(a).
For later use note that
\begin{equation}    \label{eq:for later use}
\lim_{\rho\uparrow 1} \limsup_{t\to\infty}{\mmp\{\xi>t\}\over
v(t)} \int_{(\rho u_r,u_r]}v((u_l-y)t) \, {\rm d}U(ty) ~=~
0,\quad r<l
\end{equation}
which can be proved by the same argument as before (since
$v(t(u_l-y))/v(t)$ converges uniformly to $(u_l-y)^{\beta}$ on
$(\rho u_r,u_r]$ as $t\to\infty$), though appealing to
\begin{equation*}
\lim_{t \to \infty} \mmp\{\xi>t\}U(ty) ~=~ {y^\alpha\over
\Gamma(1-\alpha)\Gamma(1+\alpha)}
\end{equation*}
(see formula (8.6.4) on p.~361 in \citep{Bingham+Goldie+Teugels:1989}) rather than \eqref{weak}.

According to Lemma \ref{Lem:Thm 4.2 of Billingsley}, relation \eqref{0010000} follows
if we can verify that, as $\rho \uparrow 1$, the right-hand side of \eqref{inter}
converges in distribution to the right-hand side of \eqref{0010000} and that
\begin{align}    \label{0030}
\lim_{\rho \uparrow 1} \limsup_{t \to \infty} \mmp\bigg\{\bigg| & \lambda_1 \sqrt{\frac{\mmp\{\xi>t\}}{v(t)}}
\sum_{j=1}^m \gamma_j \int_{(\rho u_j,u_j]}h(t(u_j-y)) \, {\rm d}\nu(ty)\notag \\
& \!\!+\lambda_2 \frac{\mmp\{\xi>t\}}{v(t)}\bigg(\sum_{j=1}^m \gamma^2_j\int_{(\rho u_j,u_j]} v(t(u_j-y)) \, {\rm d}\nu(ty)	\notag  \\
&\hphantom{\!\!+\lambda_2 \frac{\mmp\{\xi>t\}}{v(t)}\bigg(}\notag \\
&+2\sum_{1\leq r<l\leq m}\gamma_r\gamma_l\int_{(\rho u_r,u_r]}f((u_r-y)t,(u_l-y)t) \, {\rm d}\nu(ty)\bigg)\bigg|>\delta\bigg\} ~=~ 0
\end{align}
for all $\delta>0$. The first of these (even with distributional
convergence replaced by a.s.\ convergence) is a consequence of
the monotone convergence theorem and the a.s.\ finiteness of the
integrals in \eqref{0010000} which follows from Lemma
\ref{integrals}. Left with proving \eqref{0030} use \eqref{eq:impo
ineq1} to observe that \eqref{ref1} together with \eqref{eq:for
later use} leads to
\begin{equation*}
\lim_{\rho\uparrow 1} \limsup_{t\to\infty} \frac{\mmp\{\xi>t\}}{v(t)}\int_{(\rho u_r,u_r]}|f((u_r-y)t, (u_l-y)t)| \, {\rm d}U(ty)
~=~ 0,\quad r<l.
\end{equation*}
Applying Markov's inequality we conclude that \eqref{0030} follows
from the last asymptotic relation, \eqref{ref1} and \eqref{ref3}.
This completes the proof of \eqref{0010000}.
\end{proof}

\begin{lemma}   \label{Lem:Sgibnev}
Let $0\leq r_1<r_2\leq 1$. Suppose that $\phi:[0,\infty) \to [0,\infty)$
is either increasing and $\lim_{t\to\infty}(\phi(t)/\int_0^t\phi(y)\dy)=0$,
or decreasing and, if $r_2=1$, locally integrable. 
If $\me \xi < \infty$ and $\lim_{t \to \infty} \int_{(1-r_2)t}^{(1-r_1)t} \phi(y) \dy = \infty$,
then
\begin{equation*}
\int_{[r_1t,r_2t]} \phi(t-y) \, {\rm d}U(y) ~\sim~
\frac{1}{\me\xi} \int_{(1-r_2)t}^{(1-r_1)t} \! \phi(y) \, \dy,
\quad t \to \infty.
\end{equation*}
\end{lemma}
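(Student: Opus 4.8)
The plan is first to reduce the statement, via the substitution $s=t-y$, to showing $J(t):=\int_{[r_1t,\,r_2t]}\phi(t-y)\,{\rm d}U(y)\sim\mu^{-1}\Phi(t)$, where $\mu:=\me\xi$, $U(t)=\me[\nu(t)]$, $a_t:=(1-r_2)t$, $b_t:=(1-r_1)t$ and $\Phi(t):=\int_{a_t}^{b_t}\phi(y)\,\dy$; here $0\leq a_t<b_t$ and $\Phi(t)\to\infty$ by hypothesis. I would then squeeze $J(t)$ between two Riemann sums attached to a partition of $[r_1t,r_2t]$ into $K=K(t)$ blocks of common length $\ell=\ell(t):=(r_2-r_1)t/K(t)$, evaluating the increments of $U$ over these blocks by a uniform local version of Blackwell's theorem. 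Since $K(t)$ is allowed to tend to $\infty$, this single computation covers $r_1\geq0$, $r_2\leq1$ and both monotonicity alternatives at once.

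Two preliminary facts are needed. (i) \emph{$\phi(b_t)\vee\phi(a_t)=o(\Phi(t))$.} If $\phi$ is increasing then $\int_0^{\theta x}\phi\leq\theta\int_0^{x}\phi$ for $\theta\in(0,1)$ (because $\int_{\theta x}^{x}\phi\geq(1-\theta)x\,\phi(\theta x)\geq\tfrac{1-\theta}{\theta}\int_0^{\theta x}\phi$), so with $x=b_t$ and $\theta=a_t/b_t=\tfrac{1-r_2}{1-r_1}$ one gets $\Phi(t)\geq\tfrac{r_2-r_1}{1-r_1}\int_0^{b_t}\phi$, whence $\phi(b_t)=o\big(\int_0^{b_t}\phi\big)=o(\Phi(t))$ and $\phi(a_t)\leq\phi(b_t)$; if $\phi$ is decreasing, $\phi(t)=o(\int_0^t\phi(y)\,\dy)$ is automatic, for $r_2<1$ the value $\phi(a_t)=\phi((1-r_2)t)$ stays bounded, hence is $o(\Phi(t))$, while $\phi(b_t)\leq\phi(a_t)$, and for $r_2=1$ one first peels off $\int_{(t-\delta,\,t]}\phi(t-y)\,{\rm d}U(y)$, which by the key renewal theorem is $o(\Phi(t))$ (a truncation near $0$ absorbing a possible singularity of $\phi$ there, exactly as in the proof of Lemma \ref{Lem:convergence to 0}), the integrand being $\leq\phi(\delta)$ on $[r_1t,t-\delta]$. (ii) \emph{Uniform local Blackwell:} if $\ell(t)\to\infty$ and $s_0(t)\to\infty$, then $\sup_{s\geq s_0(t)}\big|U(s+\ell)-U(s)-\ell/\mu\big|/\ell\to0$. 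Indeed, fixing $h>0$ and $\varepsilon>0$ and splitting $(s,s+\ell]$ into $\lfloor\ell/h\rfloor$ blocks of length $h$ plus a shorter remainder, Blackwell's theorem makes each full block (which starts at a point $\geq s\geq s_0(t)$) contribute $h/\mu$ up to $\varepsilon$ once $t$ is large, while the remainder contributes a value in $[0,U(h)]$ by the distributional subadditivity of $\nu$ (yielding $U(x+h)-U(x)\leq U(h)$); thus $|U(s+\ell)-U(s)-\ell/\mu|\leq\varepsilon\ell/h+h/\mu+U(h)$, and dividing by $\ell$, letting $t\to\infty$ and then $\varepsilon\downarrow0$ gives the claim.

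For the main estimate, on the block $I_j=[r_1t+(j-1)\ell,\,r_1t+j\ell]$ the argument $t-y$ ranges over $[b_t-j\ell,\,b_t-(j-1)\ell]$, so monotonicity of $\phi$ traps $J(t)$ between $\sum_{j=1}^{K}\phi(b_t-j\ell)\,\Delta_jU$ and $\sum_{j=1}^{K}\phi(b_t-(j-1)\ell)\,\Delta_jU$, where $\Delta_jU:=U(r_1t+j\ell)-U(r_1t+(j-1)\ell)\geq0$. If $1\ll K(t)\ll t$ then $\ell(t)\to\infty$ and $\ell(t)=o(t)$, so by (ii), with $s_0(t)=r_1t$ when $r_1>0$ (and, when $r_1=0$, the single block starting at $0$ handled instead by the elementary renewal theorem $U(\ell)-1\sim\ell/\mu$), one has $\Delta_jU=\tfrac{\ell}{\mu}(1+o(1))$ uniformly in $j$. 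Meanwhile $\ell\sum_j\phi(b_t-j\ell)$ and $\ell\sum_j\phi(b_t-(j-1)\ell)$ are precisely the left– and right–endpoint Riemann sums of $\phi$ over $[a_t,b_t]$, each within $\ell\,|\phi(b_t)-\phi(a_t)|\leq\ell\,(\phi(b_t)\vee\phi(a_t))$ of $\Phi(t)$. Since $\Phi(t)\leq(r_2-r_1)t\,(\phi(b_t)\vee\phi(a_t))$, the quantity $A(t):=t\,(\phi(b_t)\vee\phi(a_t))/\Phi(t)$ satisfies $(r_2-r_1)^{-1}\leq A(t)=o(t)$ by (i), so the choice $K(t):=\big\lceil t\sqrt{(\phi(b_t)\vee\phi(a_t))/\Phi(t)}\,\big\rceil$ satisfies $1\ll K(t)\ll t$ and $\ell\,(\phi(b_t)\vee\phi(a_t))=o(\Phi(t))$; then both Riemann sums equal $\Phi(t)(1+o(1))$, both trapping sums equal $\mu^{-1}\Phi(t)(1+o(1))$, and $J(t)\sim\mu^{-1}\Phi(t)$ follows.

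The one genuine difficulty is fact (ii): Blackwell's theorem controls a renewal increment only over a fixed window, and one must prevent the $O(1)$ per-block errors from accumulating once $\ell(t)\to\infty$, which is exactly why $K(t)$ is let grow and why the increment estimate has to be uniform in the block position $s$. The lattice case is a routine variant: Blackwell's theorem and the key renewal theorem are then invoked in their lattice forms, with $h$ and $\ell$ multiples of the span and the partition points on the lattice, and nothing else changes.
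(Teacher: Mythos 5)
Your proof is correct, and it takes a genuinely different route from the paper, which in fact offers no argument of its own for Lemma \ref{Lem:Sgibnev}: it delegates the decreasing case to Lemma 8.2 of \citep{Iksanov+Marynych+Vatutin:2013+}, the increasing case with $r_1=0$, $r_2=1$ to Lemma A.4 of \citep{Iksanov:2012}, and states that the general case follows the lines of Theorem 4 in \citep{Sgibnev:1981}, omitting the details. You instead give a self-contained argument: partition $[r_1t,r_2t]$ into blocks of mesoscopic length $\ell(t)\to\infty$ with $\ell(t)=o(t)$, trap the integral between endpoint Riemann sums by monotonicity, and evaluate each renewal increment by a position-uniform version of Blackwell's theorem obtained by chaining fixed windows and bounding the remainder via subadditivity of $U$. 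The quantitative heart is the tension between $\ell(t)\to\infty$ (needed for the Blackwell step) and $\ell(t)\bigl(\phi(a_t)\vee\phi(b_t)\bigr)=o(\Phi(t))$ (needed for the Riemann sums, with $a_t=(1-r_2)t$, $b_t=(1-r_1)t$, $\Phi(t)=\int_{a_t}^{b_t}\phi(y)\dy$), and your fact (i) — via the inequality $\int_0^{\theta x}\phi\leq\theta\int_0^x\phi$ for increasing $\phi$, resp.\ boundedness of $\phi$ plus $\Phi(t)\to\infty$ for decreasing $\phi$ — is exactly what makes the choice $K(t)=\bigl\lceil t\sqrt{(\phi(a_t)\vee\phi(b_t))/\Phi(t)}\,\bigr\rceil$ work; the block at the origin when $r_1=0$ is correctly settled by the elementary renewal theorem, and the lattice remark is fine since Blackwell with window equal to the span holds for real shifts, so even lattice alignment of the partition is not essential. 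Two cosmetic points: take the blocks half-open so atoms of $U$ are not counted twice, and the peeling of $\int_{(t-\delta,t]}$ in the decreasing case with $r_2=1$ is superfluous under the stated hypotheses, since $\phi(0)<\infty$ and monotonicity already make $\phi$ bounded (it would matter only if one allowed a singularity at $0$). What your route buys is an elementary, unified treatment of both monotonicity directions and all $0\leq r_1<r_2\leq 1$ without importing Sgibnev-type renewal results; what the paper's route buys is brevity.
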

If $\phi$ is decreasing this is Lemma 8.2 in \citep{Iksanov+Marynych+Vatutin:2013+},
the case of increasing $\phi$ and $r_1=0$, $r_2=1$ is covered by Lemma A.4 in \citep{Iksanov:2012}.
In the general case the proof goes along the lines of the proof of Theorem 4 in \citep{Sgibnev:1981},
we omit the details.

In \cite[Lemma A.4]{Iksanov:2012} it is shown that a particular
case of Lemma \ref{Lem:Sgibnev} also holds for functions $\phi$ of
bounded variation. We now give an example which demonstrates that
the result of Lemma \ref{Lem:Sgibnev} may fail to hold for
ill-behaved $\phi$. Let, for instance,
$\phi(t)=\1_{\mathbb{Q}_+^\comp}(t)$, where $\mathbb{Q}_+^\comp$
is the set of positive irrational numbers. Then $\int_0^t \phi(y)
\dy = t$. Now suppose the law of $\xi$ is concentrated at rational
points in $(0,1)$. Note that choosing these points properly,
the law of $\xi$ can be made lattice as well as nonlattice.
The points of increase of the renewal function $U(y)$ are rational points only.
Hence $\int_{[0,t]}\phi(t-y){\rm d}U(y)=0$ for rational $t$.
\begin{lemma}\label{equiv:Lemma}
Let $\me \xi<\infty$ and $\phi:[0,\infty) \to [0,\infty)$ be a
locally bounded and measurable function.

\noindent (a) Let $0\leq r_1<r_2\leq 1$. If there exists a
monotone function $\psi:[0,\infty) \to [0,\infty)$ such that
$\phi(t)\sim \psi(t)$ as $t\to\infty$, then
$$
\int_{[r_1t,r_2t]}\phi(t-y) \, {\rm d}U(y) ~\sim~
\int_{[r_1t,r_2t]}\psi(t-y) \, {\rm d}U(y), \quad t\to\infty
$$
provided that, when $r_2=1$, $\lim_{t \to \infty} \int_0^t \phi(y){\rm
d}y=\infty$ and $\lim_{t\to\infty}(\phi(t)/\int_0^t\phi(y)\dy)=0$.

\noindent (b) If there exists a locally bounded and measurable
function $\psi:[0,\infty)\to [0,\infty)$ such that
$\phi(t)=o(\psi(t))$ as $t\to\infty$ and
$\lim_{t \to \infty}\int_0^t\psi(t-y){\rm d}U(y)=+\infty$, then
$$
\int_{[0,t]}\phi(t-y) \, {\rm d}U(y)
=o\bigg(\int_{[0,t]}\psi(t-y){\rm d}U(y)\bigg),
\quad	t\to\infty.	$$
\end{lemma}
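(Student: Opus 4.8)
The plan is to prove both parts by elementary sandwiching, in each case isolating the range of $y$ near $t$ where the hypothesis on $\phi$ carries no information and showing that it contributes negligibly.

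For part (b) I would fix $\varepsilon>0$, choose $t_0$ with $\phi(s)\leq\varepsilon\psi(s)$ for $s\geq t_0$, and split $\int_{[0,t]}\phi(t-y)\,{\rm d}U(y)$ at $y=t-t_0$. On $[0,t-t_0]$ one has $t-y\geq t_0$, so this piece is at most $\varepsilon\int_{[0,t]}\psi(t-y)\,{\rm d}U(y)$; on $(t-t_0,t]$ one bounds $\phi$ by $M:=\sup_{[0,t_0]}\phi<\infty$ (local boundedness of $\phi$) and uses $U(t)-U(t-t_0)\leq U(t_0)$ (distributional subadditivity of $\nu$, as in the proof of Lemma~\ref{main4}) to see this piece is at most $M\,U(t_0)=O(1)$. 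Dividing by $\int_{[0,t]}\psi(t-y)\,{\rm d}U(y)\to\infty$ and letting $\varepsilon\downarrow0$ gives the claim.

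For part (a), since only the behaviour of $\psi$ at infinity is relevant, I would first reduce to the case where $\psi$ is strictly positive and locally bounded: this is automatic when $\psi$ is increasing, and when $\psi$ is decreasing one replaces $\psi$ by the constant $\psi(1)$ on $[0,1]$, which preserves monotonicity and local integrability and does not affect $\phi\sim\psi$. Given $\varepsilon>0$ pick $t_0$ with $(1-\varepsilon)\psi\leq\phi\leq(1+\varepsilon)\psi$ on $[t_0,\infty)$. If $r_2<1$, then for $t$ large enough $t-y\geq(1-r_2)t\geq t_0$ for all $y\in[r_1t,r_2t]$, so integrating these inequalities against ${\rm d}U$ over $[r_1t,r_2t]$ (where $\int\psi(t-y)\,{\rm d}U(y)>0$) forces the ratio between $1-\varepsilon$ and $1+\varepsilon$, and $\varepsilon\downarrow0$ finishes this subcase. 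If $r_2=1$, split both $I_\phi(t):=\int_{[r_1t,t]}\phi(t-y)\,{\rm d}U(y)$ and $I_\psi(t):=\int_{[r_1t,t]}\psi(t-y)\,{\rm d}U(y)$ at $y=t-t_0$: the pieces over $(t-t_0,t]$ are $O(1)$ (both integrands are bounded on $[0,t_0]$ and $U(t)-U(t-t_0)\leq U(t_0)$), while on $[r_1t,t-t_0]$ the sandwich applies, so $I_\phi(t)=(1+O(\varepsilon))\,I_\psi(t)+O(1)$. It then remains to show $I_\psi(t)\to\infty$, which I would obtain from Lemma~\ref{Lem:Sgibnev} applied to $\psi$ (in the role of the function denoted there by $\phi$): its hypotheses hold because, if $\psi$ is decreasing it is locally integrable after the modification, and if $\psi$ is increasing then $\psi(t)/\int_0^t\psi(y)\,{\rm d}y\sim\phi(t)/\int_0^t\phi(y)\,{\rm d}y\to0$ --- here $\int_0^t\psi(y)\,{\rm d}y\sim\int_0^t\phi(y)\,{\rm d}y\to\infty$ is a routine consequence of $\phi\sim\psi$ and $\int_0^t\phi(y)\,{\rm d}y\to\infty$ --- and the divergence condition $\int_0^{(1-r_1)t}\psi(y)\,{\rm d}y\to\infty$ is then immediate; so $I_\psi(t)\sim\mu^{-1}\int_0^{(1-r_1)t}\psi(y)\,{\rm d}y\to\infty$. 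Dividing $I_\phi(t)=(1+O(\varepsilon))I_\psi(t)+O(1)$ by $I_\psi(t)$ and letting $\varepsilon\downarrow0$ then yields $I_\phi(t)\sim I_\psi(t)$.

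The main obstacle is the subcase $r_2=1$ of part (a): near $y=t$ the relation $\phi\sim\psi$ is useless, and when $\psi$ is decreasing a crude renewal lower bound on $\int_{[r_1t,t-t_0]}\psi(t-y)\,{\rm d}U(y)$ does not suffice (for instance $\psi(t)$ could behave like $1/(t\log t)$, so that $t\psi(t)\to0$ while $\int_0^t\psi(y)\,{\rm d}y\to\infty$); it is precisely here that the sharp renewal asymptotics of Lemma~\ref{Lem:Sgibnev} are needed to guarantee that the main piece tends to infinity and hence dominates the $O(1)$ remainders.
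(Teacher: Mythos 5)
Your proposal is correct and follows essentially the same route as the paper's own proof: the sandwich $(1-\varepsilon)\psi\le\phi\le(1+\varepsilon)\psi$ beyond $t_0$, the split of the integral at $y=t-t_0$ with the tail controlled by local boundedness and the subadditivity bound $U(t)-U(t-t_0)\le U(t_0)$, and Lemma \ref{Lem:Sgibnev} applied to $\psi$ to guarantee $\int_{[r_1t,t]}\psi(t-y)\,{\rm d}U(y)\to\infty$ when $r_2=1$ (and likewise in part (b)). The only differences are cosmetic: you also split $I_\psi$ for the lower bound and insert an unnecessary (but harmless) modification of a decreasing $\psi$ near the origin, where the paper simply notes that a finite-valued monotone function is automatically locally integrable.
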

\begin{proof}
(a) For any $\delta\in (0,1)$ there exists a $t_0>0$ such that
\begin{equation}\label{ineq}
1-\delta\leq \phi(t)/\psi(t)\leq
1+\delta
\end{equation}
for all $t\geq t_0$.

\noindent {\sc Case $r_2<1$}. We have, for $t\geq
(1-r_2)^{-1}t_0$,
\begin{equation*}
(1-\delta) \int_{[r_1t,r_2t]} \!\! \psi(t-y) \, {\rm d}U(y)
~\leq~	\int_{[r_1t,r_2t]} \!\! \phi(t-y) \, {\rm d}U(y)
~\leq~ (1+\delta) \int_{[r_1t,r_2t]} \!\! \psi(t-y) \, {\rm d}U(y)
\end{equation*}
Dividing both sides by $\int_{[r_1t,r_2t]} \psi(t-y) \, {\rm d}U(y)$
and sending $t\to\infty$ and then $\delta \downarrow 0$ gives the result.

\noindent {\sc Case $r_2=1$}.
Since $\psi$ is monotone, it is locally integrable.
Further, $\lim_{t \to \infty} \int_0^t \psi(y){\rm d}y=\infty$ and
$\lim_{t\to\infty}(\psi(t)/\int_0^t\psi(y)\dy)=0$.
Hence Lemma \ref{Lem:Sgibnev} applies and
yields 
$$
\lim_{t \to \infty} \int_{[r_1t,t]} \psi(t-y) \, {\rm d}U(y)=\infty.
$$
In view of \eqref{ineq} we have
\begin{eqnarray*}
\int_{[r_1t,t]} \!\! \phi(t-y) \, {\rm d}U(y)
& \leq & (1+\delta)
\int_{[r_1t,t-t_0]} \!\! \psi(t-y) \, {\rm d}U(y)
+\int_{(t-t_0,t]} \!\! \phi(t-y) \, {\rm d}U(y)    \notag  \\
& \leq & (1+\delta) \int_{[r_1t,t]} \!\! \psi(t-y) \, {\rm
d}U(y) + U(t_0) \sup_{0 \leq y \leq t_0} \phi(y)
\end{eqnarray*}
for $t\geq (1-r_1)^{-1}t_0$, the last inequality following from
the subadditivity of $U$.  Dividing both sides by
$\int_{[r_1t,t]} \psi(t-y) \, {\rm d}U(y)$ and sending
$t\to\infty$ yields
$$
\limsup_{t\to\infty}\frac{\int_{[r_1t,t]} \! \phi(t-y) \, {\rm d}U(y)}{\int_{[r_1t,t]} \! \psi(t-y) \, {\rm d}U(y)}
~\leq~	1+\delta.
$$
The converse inequality for the lower limit follows analogously.

\noindent (b) For any $\delta\in (0,1)$ there exists a $t_0>0$
such that $\phi(t)/\psi(t)\leq \delta$ for all $t\geq t_0$. The
rest of the proof is the same as for the case $r_2=1$ of part (a).
\end{proof}

In the main text we have used the following corollary of Lemma \ref{Lem:Sgibnev}.

\begin{lemma}   \label{Lem:Sgibnev2}
Let $\me\xi < \infty$ and $\phi:[0,\infty)\to[0,\infty)$ be
locally bounded, measurable and regularly varying at $+\infty$ of
index $\beta\in (-1,\infty)$. If $\beta=0$ assume further that
there exists a monotone function $u$ such that $\phi(t) \sim u(t)
$ as $t\to\infty$. Then, for $0\leq r_1<r_2\leq 1$,
\begin{equation*}
\int_{[r_1t,r_2t]} \phi(t-y) \, {\rm d}U(y)  ~\sim~
\frac{t\phi(t)}{(1+\beta)\me
\xi}((1-r_1)^{1+\beta}-(1-r_2)^{1+\beta}), \quad t\to\infty.
\end{equation*}
\end{lemma}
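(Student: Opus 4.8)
The plan is to reduce the integrand to a monotone function, apply Lemma~\ref{Lem:Sgibnev}, and then evaluate the resulting integral by Karamata's theorem.

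First I would produce a monotone companion of $\phi$. When $\beta\neq0$, Lemma~\ref{reg_var}(b) furnishes a monotone function $u$ with $\phi(t)\sim u(t)$; when $\beta=0$, such a $u$ is provided by hypothesis. Altering $u$ if necessary on a bounded neighbourhood of the origin (for a decreasing $u$, by making it constant there) we keep it monotone on $[0,\infty)$ and keep the relation $\phi(t)\sim u(t)$, while ensuring that $u$ is locally bounded, hence locally integrable; being asymptotically equivalent to $\phi$, this $u$ is regularly varying of index $\beta$. Since $\phi$ is locally bounded and measurable and $\phi\sim u$, Lemma~\ref{equiv:Lemma}(a) gives
\[
\int_{[r_1t,r_2t]}\phi(t-y)\,{\rm d}U(y)~\sim~\int_{[r_1t,r_2t]}u(t-y)\,{\rm d}U(y),\qquad t\to\infty;
\]
when $r_2=1$ the additional hypotheses of that lemma, namely $\int_0^t\phi(y)\,{\rm d}y\to\infty$ and $\phi(t)/\int_0^t\phi(y)\,{\rm d}y\to0$, follow from Karamata's theorem (Lemma~\ref{reg_var}(c)) applied to $\phi$.

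Next I would apply Lemma~\ref{Lem:Sgibnev} with $\phi$ there replaced by $u$. Its monotonicity hypothesis is satisfied: if $u$ is increasing (the case $\beta>0$, and possibly $\beta=0$), then $u(t)/\int_0^tu(y)\,{\rm d}y\to0$ by Karamata; if $u$ is decreasing (the case $\beta\in(-1,0)$, and possibly $\beta=0$), then $u$ is locally integrable by the reduction above. The remaining requirement $\int_{(1-r_2)t}^{(1-r_1)t}u(y)\,{\rm d}y\to\infty$ is verified in the final step. Lemma~\ref{Lem:Sgibnev} then yields
\[
\int_{[r_1t,r_2t]}u(t-y)\,{\rm d}U(y)~\sim~\frac{1}{\me\xi}\int_{(1-r_2)t}^{(1-r_1)t}u(y)\,{\rm d}y .
\]

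Finally I would evaluate this integral. Writing it as $\int_0^{(1-r_1)t}u(y)\,{\rm d}y-\int_0^{(1-r_2)t}u(y)\,{\rm d}y$ (the second term absent when $r_2=1$), Karamata's theorem gives $\int_0^{st}u(y)\,{\rm d}y\sim(1+\beta)^{-1}\,st\,u(st)$ for fixed $s>0$, and regular variation of $u$ of index $\beta$ gives $u(st)\sim s^\beta u(t)$, whence $\int_0^{(1-r_i)t}u(y)\,{\rm d}y\sim(1+\beta)^{-1}(1-r_i)^{1+\beta}\,t\,u(t)$ (which equals $0$ in the limit when $r_i=1$, since $1+\beta>0$). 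Hence
\[
\int_{(1-r_2)t}^{(1-r_1)t}u(y)\,{\rm d}y~\sim~\frac{t\,u(t)}{1+\beta}\,\big((1-r_1)^{1+\beta}-(1-r_2)^{1+\beta}\big),
\]
which tends to $\infty$ because $0\le r_1<r_2\le1$ makes the bracket strictly positive while $t\mapsto t\,u(t)$ is regularly varying of positive index $1+\beta$; this also discharges the outstanding hypothesis of Lemma~\ref{Lem:Sgibnev}. Chaining the three displayed equivalences and replacing $u(t)$ by $\phi(t)$ gives the claim. The only genuinely delicate point is the bookkeeping in the boundary case $r_2=1$ and the modification of $u$ near the origin (so that Lemma~\ref{Lem:Sgibnev} is applicable) without affecting the asymptotics; the rest is a direct appeal to the cited lemmas.
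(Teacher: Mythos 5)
Your proposal is correct and follows essentially the same route as the paper's proof: replace $\phi$ by a monotone companion $u$ (Lemma \ref{reg_var}(b) or the hypothesis when $\beta=0$, modified near the origin), pass from $\phi$ to $u$ inside the renewal integral via Lemma \ref{equiv:Lemma}(a), convert to a Lebesgue integral by Lemma \ref{Lem:Sgibnev}, and evaluate by Karamata's theorem (Lemma \ref{reg_var}(c)). The only difference is that you spell out the $r_2=1$ bookkeeping and the verification of the hypotheses of Lemma \ref{Lem:Sgibnev} in slightly more detail than the paper does.
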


The result is well known in the case where $\phi$ is increasing,
$r_1=0$, $r_2=1$, $\beta \neq 0$, and the law of $\xi$ is
nonlattice, see Theorem 2.1 in \citep{Mohan:1976}.

\begin{proof}
If $\beta \neq 0$, Lemma \ref{reg_var}(b) ensures the existence of
a positive monotone function $u$ such that $\phi(t) \sim u(t)$ as
$t \to \infty$. If $\beta=0$ such a function exists by assumption.
Modifying $u$ if needed in the right vicinity of zero we can
assume that $u$ is monotone and locally integrable. Therefore,
$$
\int_{[r_1t,r_2t]} \!\! \phi(t-y) \, {\rm d}U(y) ~\sim~
\int_{[r_1t,r_2t]} \!\! u(t-y) \, {\rm d}U(y) ~\sim~ \frac{1}{\me
\xi}\int_{(1-r_2)t}^{(1-r_1)t} \!\! u(y) \, {\rm d}y
$$
where the first equivalence follows from Lemma
\ref{equiv:Lemma}(a) and the second is a consequence of Lemma
\ref{Lem:Sgibnev} (observe that, with $g=\phi$ or $g=u$,
$\lim_{t\to\infty} (g(t)/\int_0^t g(y)\dy)=0$ and $\lim_{t \to \infty}
\int_{(1-r_2)t}^{(1-r_1)t}g(y){\rm d}y=\infty$ hold by Lemma
\ref{reg_var}(c) because $g$ is regularly varying of index
$\beta>-1$). Finally, using Lemma \ref{reg_var}(c) we obtain
\begin{eqnarray*}
{1\over \me \xi}\int_{(1-r_2)t}^{(1-r_1)t} \!\! u(y) {\rm d}y
&\sim& \frac{tu(t)}{(1+\beta)\me
\xi}((1-r_1)^{1+\beta}-(1-r_2)^{1+\beta})\\
&\sim&
\frac{t\phi(t)}{(1+\beta)\me
\xi}((1-r_1)^{1+\beta}-(1-r_2)^{1+\beta}).
\end{eqnarray*}
The proof is complete.
\end{proof}

Part (a) of the next lemma is a slight extension of Lemma 5.2 in \citep{Iksanov+Marynych+Meiners:2014}.
\begin{lemma}   \label{Lem:convergence of 1st moment at t=1}
Suppose that \eqref{eq:regular_variation_inf_mean} holds. Let
$\phi:[0,\infty) \to \mr$ be a locally bounded and measurable
function satisfying $\phi(t)\sim t^\gamma\ell(t)$ as $t\to\infty$
for some $\gamma \geq -\alpha$ and some $\ell$. If
$\gamma=-\alpha$, assume additionally that there exists a positive
increasing function $q$ such that $\lim_{t \to \infty}
\frac{\phi(t)}{\mmp\{\xi>t\}q(t)}=1$. Then
\begin{itemize}
    \item[(a)]
        \begin{equation*}
        \lim_{\rho \uparrow 1} \limsup_{t\to\infty} \frac{\mmp\{\xi>t\}}{\phi(t)} \int_{[\rho t,t]} \phi(t-y) \, {\rm d}U(y)~=~ 0;
        \end{equation*}
        in particular,
        \begin{equation*}
        \lim_{t \to \infty} \frac{\mmp\{\xi>t\}}{\phi(t)} \int_{[0,t]}\phi(t-y) \, {\rm d}U(y)
        ~=~ \frac{\Gamma(1+\gamma)}{\Gamma(1-\alpha)\Gamma(1+\alpha+\gamma)};
        \end{equation*}
    \item[(b)]
        $\int_{[0,t]}\phi_1(t-x) \, {\rm d}U(x)=o(\phi(t)/\mmp\{\xi>t\})$ as $t\to\infty$
        for any positive locally bounded function $\phi_1$ such that $\phi_1(t)=o(\phi(t))$, $t\to\infty$.
\end{itemize}
\end{lemma}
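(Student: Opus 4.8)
The plan is to establish part (a) first and to deduce part (b) from it by comparison; within part (a) the genuinely new content (the localized tail bound over $[\rho t,t]$ and the boundary index $\gamma=-\alpha$) will be separated from what is already contained in Lemma 5.2 of \citep{Iksanov+Marynych+Meiners:2014}. The starting point is the change of variables $s=t-y$ followed by $s=tw$, which gives, for $\rho\in(0,1)$,
\[
\mmp\{\xi>t\}\int_{[\rho t,t]}\phi(t-y)\,{\rm d}U(y)=\int_{[0,\,1-\rho]}\phi(tw)\,\bar U_t({\rm d}w),
\]
where $\bar U_t$ is the finite measure on $[0,1]$ with distribution function $\bar U_t([0,a])=\mmp\{\xi>t\}\bigl(U(t)-U((t(1-a))-)\bigr)$. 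By formula (8.6.4) on p.~361 of \citep{Bingham+Goldie+Teugels:1989} one has $\mmp\{\xi>t\}U(ty)\to c_\alpha y^\alpha$ with $c_\alpha:=\bigl(\Gamma(1-\alpha)\Gamma(1+\alpha)\bigr)^{-1}$, and since $U$ is nondecreasing with a continuous limit this convergence is locally uniform (P\'olya's theorem); hence $\bar U_t([0,\cdot])$ converges uniformly on $[0,1]$ to the continuous distribution function $a\mapsto c_\alpha(1-(1-a)^\alpha)$, whose density $\alpha c_\alpha(1-a)^{\alpha-1}$ is bounded near $a=0$.

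On the complementary range $y\in[0,\rho t]$, i.e.\ $w\in[1-\rho,1]$, one has $\phi(tw)/\phi(t)\to w^\gamma$ uniformly by Lemma \ref{reg_var}(a), so combining this with the weak convergence of $\bar U_t$ I obtain
\[
\frac{\mmp\{\xi>t\}}{\phi(t)}\int_{[0,\rho t]}\phi(t-y)\,{\rm d}U(y)\ \longrightarrow\ \int_{1-\rho}^{1}w^\gamma\,\alpha c_\alpha(1-w)^{\alpha-1}\,{\rm d}w,
\]
and the right-hand side tends, as $\rho\uparrow1$, to $\alpha c_\alpha\,\mathrm{B}(\gamma+1,\alpha)=\Gamma(1+\gamma)/\bigl(\Gamma(1-\alpha)\Gamma(1+\alpha+\gamma)\bigr)$. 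Thus the ``in particular'' assertion of (a) will follow once the first assertion of (a) is known (so that the remainder over $[\rho t,t]$ is negligible), via Lemma \ref{Lem:Thm 4.2 of Billingsley}.

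It therefore remains to prove the first assertion of (a), i.e.\ that $\limsup_{t\to\infty}\phi(t)^{-1}\int_{[0,1-\rho]}\phi(tw)\,\bar U_t({\rm d}w)\to0$ as $\rho\uparrow1$. Fixing $\delta:=(\gamma+1)/2>0$ (so $\gamma-\delta>-1$, using $\gamma\ge-\alpha>-1$) and invoking Potter's bound, $\phi(tw)\le 2\phi(t)w^{\gamma-\delta}$ for $w\in[T_\delta/t,1]$ and $t$ large, I would split the integral at $w=T_\delta/t$. On $[0,T_\delta/t]$ the integral is at most $\bigl(\sup_{[0,T_\delta]}|\phi|\bigr)\bar U_t([0,T_\delta/t])=\bigl(\sup_{[0,T_\delta]}|\phi|\bigr)\mmp\{\xi>t\}\bigl(U(t)-U((t-T_\delta)-)\bigr)$, and after dividing by $\phi(t)$ this tends to $0$: indeed $\mmp\{\xi>t\}/\phi(t)$ is bounded (it tends to $0$ when $\gamma>-\alpha$, and is $\sim q(t)^{-1}$, hence bounded, when $\gamma=-\alpha$) while $\mmp\{\xi>t\}\to0$, and in the boundary case $\gamma=-\alpha$ additionally $U(t)-U((t-T_\delta)-)\to0$ by Blackwell's renewal theorem for the infinite-mean case. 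On $[T_\delta/t,1-\rho]$ the integral is at most $2\phi(t)\int_{[T_\delta/t,1-\rho]}w^{\gamma-\delta}\,\bar U_t({\rm d}w)$, which I would bound by a dyadic decomposition of $[T_\delta/t,1-\rho]$: the mass $\bar U_t([2^{j}T_\delta/t,2^{j+1}T_\delta/t])$ is at most $\mmp\{\xi>t\}\,U(2^jT_\delta)$ by subadditivity of $U$, and the ensuing series of these dyadic masses weighted by $w^{\gamma-\delta}$ converges — precisely because $\gamma-\delta>-1$ — and tends to $0$ as $\rho\uparrow1$. This dyadic estimate is the technical core of the argument and is in essence Lemma 5.2 of \citep{Iksanov+Marynych+Meiners:2014}.

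Finally, for part (b): part (a) applied to $\phi$ itself gives $\int_{[0,t]}\phi(t-x)\,{\rm d}U(x)\sim\bigl(\Gamma(1+\gamma)/(\Gamma(1-\alpha)\Gamma(1+\alpha+\gamma))\bigr)\phi(t)/\mmp\{\xi>t\}$. Given $\varepsilon>0$ pick $t_0$ with $\phi_1(s)\le\varepsilon\phi(s)$ for $s\ge t_0$, enlarged so that also $\phi\ge0$ on $[t_0,\infty)$; splitting at $x=t-t_0$ yields, up to a boundary term of the form ${\rm const}\cdot\bigl(U(t)-U((t-t_0)-)\bigr)$ that absorbs the region where $\phi$ may be negative,
\[
\int_{[0,t]}\phi_1(t-x)\,{\rm d}U(x)\ \le\ \varepsilon\int_{[0,t]}\phi(t-x)\,{\rm d}U(x)+{\rm const}\cdot\bigl(U(t)-U((t-t_0)-)\bigr).
\]
Dividing by $\phi(t)/\mmp\{\xi>t\}$, the first term contributes at most $\varepsilon\,\Gamma(1+\gamma)/(\Gamma(1-\alpha)\Gamma(1+\alpha+\gamma))$ in the limit, while the second tends to $0$ since $U(t)-U((t-t_0)-)\to0$ (Blackwell) and $\mmp\{\xi>t\}/\phi(t)$ is bounded; letting $\varepsilon\downarrow0$ completes the proof. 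The main obstacle throughout is this behavior near $y=t$ in part (a): the renewal mass over the window $[\rho t,t]$, whose length $(1-\rho)t$ grows to infinity, must be controlled against a merely locally bounded $\phi$, and it is exactly the hypothesis $\gamma\ge-\alpha$ (together with the auxiliary monotone function $q$ when $\gamma=-\alpha$) that makes the relevant series summable.
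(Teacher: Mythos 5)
Your treatment of part (b), and your identification of the limiting constant via the rescaled renewal measure, are sound and consistent with the paper, which for (a) simply cites Lemma 5.2 of \citep{Iksanov+Marynych+Meiners:2014} (observing that the same proof covers $\gamma>0$) and proves (b) by exactly your splitting-plus-Blackwell comparison. The genuine gap is in your self-contained sketch of the first assertion of (a), i.e.\ the tail bound over $[\rho t,t]$. In your dyadic estimate the $j$-th block carries mass $\bar U_t(I_j)\leq \mmp\{\xi>t\}U(2^jT_\delta)$, and since $\mmp\{\xi>s\}U(s)\to c_\alpha$ this is of order $(2^jT_\delta/t)^{\alpha}\,\ell^\ast(t)/\ell^\ast(2^jT_\delta)$; weighting by $w^{\gamma-\delta}\asymp(2^jT_\delta/t)^{\gamma-\delta}$ gives terms of order $(2^jT_\delta/t)^{\gamma-\delta+\alpha}$ up to a slowly varying ratio. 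The sum over the dyadic scales between $T_\delta$ and $(1-\rho)t$ is dominated by its top scale, hence is $O\big((1-\rho)^{\gamma-\delta+\alpha-\varepsilon}\big)$, only if $\gamma-\delta+\alpha>0$; if $\gamma-\delta+\alpha<0$ the bound is dominated by the $j=0$ block and blows up as $t\to\infty$, so it is vacuous for every fixed $\rho$. The condition you invoke, $\gamma-\delta>-1$, is the integrability condition against the limiting density $\alpha c_\alpha(1-a)^{\alpha-1}$, but the subadditivity bound you use charges the blocks near $w=0$ with mass $\mmp\{\xi>t\}U(2^jT_\delta)$, which is far larger than what that density would give, so $\gamma-\delta>-1$ is not the relevant threshold. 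Concretely, your fixed choice $\delta=(\gamma+1)/2$ satisfies $\delta<\gamma+\alpha$ only when $\gamma>1-2\alpha$; e.g.\ for $\alpha\leq 1/2$ and $\gamma=0$ (a case squarely within the lemma) your bound fails. For $\gamma>-\alpha$ this is repairable by taking $\delta<\gamma+\alpha$ (and a second Potter bound for $\ell^\ast(t)/\ell^\ast(2^jT_\delta)$), but it is a real error as written.

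More seriously, in the boundary case $\gamma=-\alpha$ no choice of $\delta>0$ gives $\gamma-\delta+\alpha>0$, so the Potter/dyadic route cannot work there at all, and your sketch never uses the hypothesis on the increasing function $q$ where it is needed: you invoke it only to note that $\mmp\{\xi>t\}/\phi(t)$ stays bounded on the harmless piece $[0,T_\delta/t]$. The standard argument uses $q$ substantively: for $T\leq t-y\leq(1-\rho)t$ bound $\phi(t-y)\leq(1+\varepsilon)\mmp\{\xi>t-y\}\,q(t-y)\leq(1+\varepsilon)\mmp\{\xi>t-y\}\,q(t)$ by monotonicity, divide by $\phi(t)\sim\mmp\{\xi>t\}q(t)$, and reduce the claim to $\lim_{\rho\uparrow1}\limsup_{t\to\infty}\int_{[\rho t,t]}\mmp\{\xi>t-y\}\,{\rm d}U(y)=0$, which follows from the identity $\int_{[0,t]}\mmp\{\xi>t-y\}\,{\rm d}U(y)=1$ combined with the Dynkin--Lamperti limit for $\int_{[0,\rho t]}\mmp\{\xi>t-y\}\,{\rm d}U(y)$. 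This is, in essence, the content of Lemma 5.2 of \citep{Iksanov+Marynych+Meiners:2014}; if you defer to that lemma outright, as the paper does, your proof is complete, but the dyadic sketch you offer in its place does not establish it, precisely in the case for which the extra hypothesis on $q$ was introduced.
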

\begin{proof}
(a) In the case $\gamma\in [-\alpha, 0]$ this is just Lemma 5.2 of \citep{Iksanov+Marynych+Meiners:2014}.
In the case $\gamma>0$ exactly the same proof applies.

\noindent (b)
For any $\delta>0$ there exists a $t_0>0$ such that $\phi_1(t)/\phi(t) \leq \delta$ for all $t \geq t_0$. Hence
\begin{equation*}
\int_{[0,t]} \phi_1(t-y) \, {\rm d}U(y) ~\leq~ \delta \int_{[0,t]} \phi(t-y) \, {\rm d} U(y) +  (U(t)-U(t-t_0)) \sup_{0 \leq y \leq t_0} \phi_1(y)
\end{equation*}
for $t \geq t_0$. According to part (a) the first term on the
right-hand side grows like ${\rm const}\,\phi(t)/\mmp\{\xi>t\}$.
By Blackwell's renewal theorem, $\lim_{t \to \infty} (U(t)-U(t-t_0))=0$. Dividing the
inequality above by $\phi(t)/\mmp\{\xi>t\}$ and sending first $t
\to \infty$ and then $\delta \downarrow 0$ finishes the proof.
\end{proof}

\section*{Acknowledgements}
The authors would like to thank the anonymous referee and Prof.\
Philippe Barbe who has served as an open referee. Each of them
provided us with the most detailed and useful report out of those
we have ever gotten. In particular, the presentation of the
current versions of Theorems \ref{Thm:mu<infty} and
\ref{Thm:mu=infty} arose from their suggestions.

This research was commenced while A.\;Iksanov was visiting M\"{u}nster in May/June 2013.
Grateful acknowledgment is made for financial support and hospitality.
The research of M.\;Meiners was supported by DFG SFB 878 ``Geometry, Groups and Actions''.

\bibliography{IksMarMei_2015_1_Bib}

\end{document}